\def\M{\mathcal{M}}
\def\Pr{\mathbb{P}}
\def\eps{\varepsilon}
\def\COMMENT#1{}
\let\COMMENT=\footnote
\crefname{equation}{}{}
\crefname{enumi}{}{}
\tikzstyle{vertex}=[circle,fill=black,minimum size=5pt,inner sep=0pt]
\tikzstyle{edge} = [draw,thick,-]
\newcommand\vara{3}
\newcommand\varb{5.5}
\def\vp#1{}
\renewcommand{\vp}[1]{\footnote{\textcolor{green!40!black}{\textbf{VP: }#1}}} 
\newcommand{\s}[1]{\left\lvert #1 \right\rvert}
\newcommand{\keywords}[1]
{
	{\small\textbf{Keywords:} #1}
}
\newcommand{\prob}[1]{\mathbb{P}\left[ #1 \right]}
\newcommand{\Gm}{\mathbf{G}_{\mu}}
\newcommand{\Hm}{\mathbf{H}_{\mu}}
\newtheorem{question}{Question}
\newtheorem{corollary}[question]{Corollary}
\newtheorem{problem}[question]{Problem}
\newtheorem{conjecture}[question]{Conjecture}
\newtheorem{theorem}[question]{Theorem}
\newtheorem{proposition}[question]{Proposition}
\newtheorem{lemma}[question]{Lemma}
\newtheorem{remark}[question]{Remark}
\newtheorem{definition}[question]{Definition}
\numberwithin{question}{section}
\numberwithin{equation}{section}
\title{$1$-independent percolation on $\mathbb{Z}^2\times K_n$}
\author{Victor Falgas-Ravry\thanks{Ume{\aa} Universitet, Sweden. Email: \texttt{victor.falgas-ravry@umu.se}. Research supported by Swedish Research Council grants VR 2016-03488 and VR 2021-03687.} \and Vincent Pfenninger\thanks{University of Birmingham, UK. Email: \texttt{vxp881@bham.ac.uk}.}}
\begin{document}
	\maketitle	
\begin{abstract}
A random graph model on a host graph~$H$ is said to be \emph{$1$-independent} if for every pair of vertex-disjoint subsets~$A,B$ of~$E(H)$, the state of edges (absent or present) in~$A$ is independent of the state of edges in~$B$.  For an infinite connected graph~$H$, the \emph{$1$-independent critical percolation probability}~$p_{1,c}(H)$ is the infimum of the $p\in [0,1]$ such that every $1$-independent random graph model on~$H$ in which each edge is present with probability at least~$p$ almost surely contains an infinite connected component.

Balister and Bollob\'as observed in 2012 that $p_{1,c}(\mathbb{Z}^d)$ is nonincreasing and tends to a limit in~$[\frac{1}{2}, 1]$ as $d\rightarrow \infty$. They asked for the value of this limit. We make progress towards this question by showing that \[\lim_{n\rightarrow \infty}p_{1,c}(\mathbb{Z}^2\times K_n)=4-2\sqrt{3}=0.5358\ldots \ .\]  In fact, we show that the equality above remains true if the sequence of complete graphs~$K_n$ is replaced by a sequence of weakly pseudorandom graphs on~$n$ vertices with average degree $\omega(\log n)$. We conjecture that the equality also remains true if~$K_n$ is replaced instead by the $n$-dimensional hypercube~$Q_n$. This latter conjecture would imply the answer to Balister and Bollob\'as's question is $4-2\sqrt{3}$.

Using our results, we are also able to resolve a problem of Day, Hancock and the first author on the emergence of long paths in $1$-independent random graph models on $\mathbb{Z}\times K_n$. Finally, we prove some results on component evolution in $1$-independent random graphs, and discuss a number of open problems arising from our work that may pave the way for further progress on the question of Balister and Bollob\'as.
\end{abstract}	

\keywords{percolation theory, extremal graph theory, locally dependent random graphs}
	
\section{Introduction}	
\subsection{Background}
Percolation theory lies at the interface of probability theory, statistical physics and combinatorics. Its object of study is, roughly speaking, the connectivity properties of random subgraphs of infinite connected graphs, and in particular the points at which these undergo drastic transitions such as the emergence of infinite components. Since its inception in Oxford in the late 1950s, percolation theory has become a rich field of study (see e.g.\ the monographs~\cite{BollobasRiordan06,Grimmett99,MeesterRoy96}).  One of the cornerstones of the discipline is the Harris--Kesten Theorem~\cite{Harris60,Kesten80}, which states that if each edge of the integer square lattice $\mathbb{Z}^2$ is open independently at random with probability~$p$, then if $p\leq \frac{1}{2}$ almost surely all connected components of open edges are finite, while if $p>\frac{1}{2}$ almost surely there exists an infinite connected component of open edges. Thus~$1/2$ is what is known as the \emph{critical probability for independent bond percolation} on $\mathbb{Z}^2$.

In general, given an infinite connected graph~$H$, determining the critical probability for independent bond percolation on~$H$ is a hard problem, with the answer known exactly only in a handful of cases. There is thus great interest in methods for rigorously estimating such critical probabilities. One of the most powerful and effective techniques for doing just that was developed by Balister, Bollob\'as and Walters~\cite{BalisterBollobasWalters05}, and relies on comparing percolation processes with locally dependent  bond percolation on $\mathbb{Z}^2$ (to be more precise: $1$-independent bond percolation; see below for a definition). The method of Balister, Bollob\'as and Walters has proved influential, and has been widely applied to obtain the best rigorous confidence interval estimates for the value of the critical parameter in a wide range of models, see e.g.~\cite{BalintBeffaraTassion13,BalisterBollobas13,BalisterBollobasWalters05, BalisterBollobasSarkarKumar07,BalisterBollobasWalters09, Ball14, BenjaminiStauffer13, DeijfenHaggstromHolroyd12, DeijfenHolroydPeres11, HaenggiSarkar13, RiordanWalters07}.

However, as noted by the authors of~\cite{BalisterBollobasWalters05} and again by Balister and Bollob\'as~\cite{BalisterBollobas12} in 2012, locally dependent bond percolation is poorly understood. To quote from the latter work, ``\emph{[given that] $1$-independent percolation models have become a key tool in establishing bounds on critical probabilities [...], it is perhaps surprising that some of the most basic questions about $1$-independent models are open}''. In particular, there is no known locally dependent analogue of the Harris--Kesten Theorem, nor even until now much of a sense of what the corresponding $1$-independent critical probability ought to  be. In this paper, we contribute to the broader project initiated by Balister and Bollob\'as of addressing the gap in our knowledge about $1$-independent bond percolation by making some first steps towards a $1$-independent Harris--Kesten Theorem. To state our results and place them in their proper context, we first need to give some definitions.

Let $H=(V,E)$ be a graph. Given a probability measure~$\mu$ on subsets of~$E$, a \emph{$\mu$-random graph} $\mathbf{H}_\mu$ is a random spanning subgraph of~$H$ whose edge-set is chosen randomly from subsets of~$E$ according to the law given by~$\mu$. Each probability measure~$\mu$ on subsets of~$E$ thus gives rise to a \emph{random graph model} on the host graph~$H$, and we use the two terms (probability measure~$\mu$ on subsets of~$E$/random graph model $\mathbf{H}_{\mu}$ on~$H$) interchangeably. In this paper we will be interested in random graph models where the state (present/absent) of edges is dependent only on the states of nearby edges. Recall that the \emph{graph distance} between two subsets $A,B\subseteq E$ is  the length of the shortest path in~$H$ from an endpoint of an edge in~$A$ to an endpoint of an edge in~$B$. So in particular if an edge in~$A$ shares a vertex with an edge in~$B$, then the graph distance from~$A$ to~$B$ is zero, while if~$A$ and~$B$ are supported on disjoint vertex-sets, then the graph distance from~$A$ to~$B$ is at least one.
\begin{definition}[$k$-independence]
	A random graph model $\mathbf{H}_{\mu}$ on  a host graph~$H$ is \emph{$k$-independent} if whenever $A, B$ are disjoint subsets of~$E(H)$ such that the graph distance between~$A$ and~$B$ is at least~$k$, the random variables $E(\mathbf{H}_{\mu})\cap A$ and $E(\mathbf{H}_{\mu})\cap B$ are mutually independent. If $\mathbf{H}_{\mu}$ is $k$-independent, we say that the associated probability measure~$\mu$ is a \emph{$k$-independent measure}, or \emph{$k$-ipm}, on~$H$.
\end{definition} 
Let $\mathcal{M}_{k,\geq p}(H)$ denote the collection of all $k$-independent measures~$\mu$ on~$E(H)$ in which each edge of~$H$ is included in  $\mathbf{H}_{\mu}$ with probability at least~$p$. We define $\mathcal{M}_{k,\leq p}(H)$ mutatis mutandis, and let $\mathcal{M}_{1,p}(H)$ denote $\mathcal{M}_{k, \geq p}\cap\mathcal{M}_{k, \leq p}$ --- in other words $\mathcal{M}_{k,p}$ is the collection of all $k$-ipm~$\mu$ on~$H$  in which each edge of~$H$ is included in  $\mathbf{H}_{\mu}$ with probability exactly~$p$.

Observe that a $0$-independent measure~$\mu$ is what is known as a \emph{Bernoulli} or \emph{product} measure on~$E$: each edge in~$E$ is included in~$\mathbf{H}_{\mu}$ at random independently of all the others. We refer to such measures as \emph{independent measures}. The collection $\mathcal{M}_{0,p}(H)$ thus consists of a single measure, the \emph{$p$-random measure}, in  which each edge of~$H$ is included in the associated random graph with probability~$p$, independently of all the other edges. When the host graph~$H$ is~$K_n$, the complete graph on~$n$ vertices, this gives rise to the celebrated \emph{Erd{\H o}s--R\'enyi random graph model}, while when $H=\mathbb{Z}^2$ this is exactly the \emph{independent bond percolation} model considered in the Harris--Kesten Theorem.

In this paper, we will focus instead on $\mathcal{M}_{1,\geq p}(H)$ and $\mathcal{M}_{1, p}(H)$, whose probability measures allow for some local dependence between the edges.  A simple and well-studied example of a model from $\mathcal{M}_{1,p}(H)$ is given by \emph{site percolation}: build a random spanning subgraph $\mathbf{H}^{\mathrm{site}}_{\theta}$ of~$H$ by assigning each vertex $v\in V(H)$ a state~$S_v$ independently at random, with $S_v=1$ with probability~$\theta$ and $S_v=0$ otherwise, and including an edge $uv\in E(H)$ in $\mathbf{H}^{\mathrm{site}}_{\theta}$ if and only if $S_u=S_v=1$. Each edge in this random graph is open with probability $p=\theta^2$, and the model is clearly $1$-independent since `randomness resides in the vertices', and so what happens inside two disjoint vertex sets is independent. More generally, any \emph{state-based model} obtained by first assigning independent random states~$S_v$ to vertices $v\in V(H)$ and then adding an edge~$uv$ according to some deterministic or probabilistic rule depending only on the ordered pair $(S_u, S_v)$ will give rise to a $1$-ipm on~$H$. State-based models are a generalisation of the probabilistic notion of a \emph{two-block factor}, see~\cite{LiggettSchonmannStacey97} for details.

Given a $1$-ipm~$\mu$ on an infinite connected graph~$H$, we say that~$\mu$ \emph{percolates} if $\mathbf{H}_{\mu}$ almost surely (i.e.\ with probability~$1$) contains an infinite connected component.\footnote{Note the existence of an infinite connected component is a tail event, in the sense that one cannot create or destroy an infinite connected component by changing the state of finitely many edges, so that by a $1$-independent version of Kolmogorov's zero--one law, $\mathbf{H}_{\mu}$ contains an infinite connected component with probability $0$ or $1$ (see the discussion below Theorem 1 in~\cite[Chapter 2]{BollobasRiordan06}).}
\begin{definition}
Given an infinite connected graph~$H$, we define the \emph{$1$-independent critical percolation probability} for~$H$ to be 
\[p_{1,c}(H):=\inf\left\{p\geq 0: \ \forall \mu \in \mathcal{M}_{1,\geq p}(H), \ \mu \emph{ percolates} \right\}.\]
\end{definition}
\begin{remark}
Given $\mu \in \mathcal{M}_{1,\geq p}(H)$ we can obtain a random graph $\mathbf{H}_{\nu}$ from $\mathbf{H}_{\mu}$ by deleting each edge~$uv$ of $\mathbf{H}_{\mu}$ independently at random with probability $1-p/\left(\Pr[uv \in E(\mathbf{H}_{\mu})]\right)$. Clearly $\mathbf{H}_{\mu}$  stochastically dominates (i.e.\ is a supergraph of) $\mathbf{H}_{\nu}$ and $\nu \in \mathcal{M}_{1,p}(H)$. Thus the definition of~$p_{1,c}(H)$ above is unchanged if we replace $\mathcal{M}_{1,\geq p}(H)$ by $\mathcal{M}_{1,p}(H)$.
\end{remark}
\begin{remark}
The probability~$p_{1,c}(H)$ is in fact one of five natural critical probabilities for $1$-independent percolation one could consider, all of which are distinct  in general --- see~\cite[Section 11.3, Corollary 50 and Question 53]{DayFalgasRavryHancock20}.
\end{remark}

Balister, Bollob\'as and Walters~\cite{BalisterBollobasWalters05} devised a highly effective method for giving rigorous confidence interval results for critical parameters in percolation theory via comparison with $1$-independent models on the square integer lattice $\mathbb{Z}^2$. Their method relies on estimating the probability of certain \emph{finite, bounded} events (usually via Monte Carlo methods, whence the confidence intervals) and on bounds on the $1$-independent critical probability $p_{1,c}(\mathbb{Z}^2)$. Work of Liggett, Schonman and Stacey~\cite{LiggettSchonmannStacey97} on stochastic domination of independent models by $1$-independent models implied $p_{1,c}(\mathbb{Z}^2)<1$. Balister, Bollob\'as and Walters~\cite[Theorem 2]{BalisterBollobasWalters05} obtained the effective upper bound $p_{1,c}(\mathbb{Z}^2)<0.8639$ via a renormalisation argument; this upper bound has not been improved since, and the authors of~\cite{BalisterBollobasWalters05} noted \emph{``it would be of interest to give significantly better bounds for $p_{1,c}(\mathbb{Z}^2)$; unfortunately, we cannot even hazard a guess as to [its] value''}. The question of determining~$p_{1,c}(\mathbb{Z}^2)$ was raised again by Balister and Bollob\'as~\cite[Question 2]{BalisterBollobas12}, who noted the difficulty of the problem:
\begin{problem}[$1$-independent Harris--Kesten problem]\label{problem: HK}
Determine $p_{1,c}(\mathbb{Z}^2)$.
\end{problem}
\noindent Balister and Bollob\'as~\cite{BalisterBollobas12} observed that a simple modification of site percolation due to to Newman shows that $p_{1,c}(\mathbb{Z}^2) \geq (\theta_{s})^2 + (1-\theta_s)^2$, where $\theta_s=\theta_s(\mathbb{Z}^2)$ is the critical probability for site percolation in $\mathbb{Z}^2$.  Since it is known that $\theta_{s}\in [0.556, 0.679492]$ (see~\cite{VDBErmakov96,Wierman95}), this shows that $p_{1,c}(\mathbb{Z}^2) \geq 0.5062$. Non-rigorous simulation-based estimates $\theta_{s}\approx 0.597246$~\cite{Ziff92} improve this to a non-rigorous lower bound of~$0.5172$.  Recently, Day, Hancock and the first author gave significant improvements on these lower bounds. In~\cite[Theorem 7]{DayFalgasRavryHancock20}, they constructed measures based on an idea from the first author's PhD thesis~\cite[Theorem 62]{FalgasRavry12} showing that for any $d\in \mathbb{N}$, $p_{1,c}(\mathbb{Z}^d)\geq 4-2\sqrt{3}=0.5358\ldots $. They in fact showed $p_{1,c}(H)\geq 4-2\sqrt{3}$ for any host graph~$H$ satisfying what they call the \emph{finite $2$-percolation property} (see Section~\ref{section: proofs of main theorems} for a formal definition), a family which includes the graphs $\mathbb{Z}^2\times K_n$ for any $n\in \mathbb{N}$. (Recall that the Cartesian product $H\times K_n$ of a graph $H$ with $K_n$ is the graph whose vertices are the pairs $(v, i)\in V(H)\times \{1,2,\ldots n\}$ and in which two distinct vertices $(v,i)$ and $(v',i')$ are joined by an edge if either $v=v'$ or $vv'$ is an edge of $H$ and $i=i'$; see Section~\ref{section: notation} for an illustration and a more general definition of the Cartesian product of two graphs.)
Further, the same authors gave a different construction~\cite[Theorem 8]{DayFalgasRavryHancock20} showing that 
\begin{align}\label{eq: lower bound on crit prob from modified site percolation}
p_{1,c}(\mathbb{Z}^2)\geq (\theta_s)^2+\frac{1-\theta_s}{2},
\end{align}
where $\theta_s=\theta_s(\mathbb{Z}^2)$ is the critical probability for site percolation in $\mathbb{Z}^2$. Using the aforementioned simulation-based estimates for~$\theta_s$, this gives a non-rigorous lower bound of~$0.5549$ on $p_{1,c}(\mathbb{Z}^2)$. All these lower bounds remain far apart from the upper bound of~$0.8639$ from~\cite{BalisterBollobasWalters05}, and, as noted in~\cite{BalisterBollobasWalters05}, part of the difficulty of Problem~\ref{problem: HK} has been the absence of a clear candidate conjecture to aim for.

In  view of the difficulty of Problem~\ref{problem: HK}, there has been interest in increasing our understanding of $1$-independent models on other host graphs than $\mathbb{Z}^2$. Balister and Bollob\'as noted $p_{1,c}(\mathbb{Z}^d)$ is non-increasing in~$d$ and must therefore converge to a limit as $d\rightarrow \infty$. They showed this limit is at least~$1/2$ and posed the following problem~\cite[Question 2]{BalisterBollobas12}:
\begin{problem}[Balister and Bollob\'as problem]\label{problem: Balister Bollobas}
	Determine $\lim_{d\rightarrow \infty} p_{1,c}(\mathbb{Z}^d)$.	
\end{problem}
\noindent By the construction of Day, Falgas-Ravry and Hancock mentioned above, this limit is in fact at least $4-2\sqrt{3}$; the only known upper bound is again the~$0.8639$ upper bound on $p_{1,c}(\mathbb{Z}^2)$ from~\cite{BalisterBollobasWalters05}.

Balister and Bollob\'as have further studied $1$-independent models on infinite trees, obtaining in this setting $1$-independent analogues of classical results of Lyons~\cite{Lyons90} for independent bond percolation. Day, Hancock and the first author for their part gave a number of results on the connectivity of $1$-independent random graphs on paths and complete graphs, and on the almost sure emergence of arbitrarily long paths in $1$-independent models.  More precisely, they introduced the \emph{Long Paths critical probability} $p_{1,LP}(H)$ of~$H$, given by
\[p_{1, LP}(H):=\inf \left\{p\in [0,1]: \ \forall \mu \in \mathcal{M}_{1,p}, \forall \ell \in \mathbb{N}, \  \mathbb{P}\left[\mathbf{H}_{\mu} \textrm{ contains a path of length }\ell\right]>0  \right\},\]
and showed $p_{1, LP}(\mathbb{Z})=3/4$, $p_{1, LP}(\mathbb{Z}\times K_2)=2/3$. Since the sequence $p_{1, LP}(\mathbb{Z}\times K_n)$ is non-increasing in~$n$, it tends to a limit in~$[0,1]$ as $n\rightarrow\infty$. Day, Hancock and the first author showed in~\cite[Theorem 12(v)]{DayFalgasRavryHancock20} that this limit lies in the interval $[4-2\sqrt{3}, 5/9]$ and asked~\cite[Problem 54]{DayFalgasRavryHancock20}:
\begin{problem}[Day, Falgas--Ravry and Hancock]\label{problem: Day FR Hancock}
Determine $\lim_{n\rightarrow\infty }p_{1, LP}(\mathbb{Z}\times K_n)$.
\end{problem}

\subsection{Contributions of this paper}
Our main result in this paper is determining the limit of the $1$-independent critical probability for percolation in $\mathbb{Z}^2\times K_n$ as $n\rightarrow \infty$:
	\begin{theorem}\label{theorem: 1-indep percolation in Z^2 times Kn} The following hold:
		\begin{enumerate}[label = \upshape{(\roman*)}]
\item if $p>4-2\sqrt{3}$ is fixed, then there exists $N\in \mathbb{N}$ such that $p_{1,c}\left(\mathbb{Z}^2\times K_N\right)\leq p$;
\item for every $n\in \mathbb{N}$, $p_{1,c}\left(\mathbb{Z}^2\times K_n\right)\geq 4-2\sqrt{3}$.
\end{enumerate}
In particular, we have
$\lim_{n\rightarrow \infty}p_{1,c}(\mathbb{Z}^2\times K_n)=4-2\sqrt{3}=0.5358\ldots$ .
\end{theorem}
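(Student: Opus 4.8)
Part~(ii) requires no new work: it is exactly \cite[Theorem~7]{DayFalgasRavryHancock20} applied to $H=\mathbb{Z}^2\times K_n$, together with the observation of the same authors that $\mathbb{Z}^2\times K_n$ has the finite $2$-percolation property for every $n$. So from now on I focus on part~(i), and fix $p>4-2\sqrt{3}$. The plan is a block-renormalisation argument comparing percolation in $\mathbb{Z}^2\times K_N$ with $1$-independent bond percolation on $\mathbb{Z}^2$, for which Balister, Bollob\'as and Walters \cite{BalisterBollobasWalters05} proved $p_{1,c}(\mathbb{Z}^2)\le 0.8639<1$.

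Concretely, I would tile $\mathbb{Z}^2$ by a standard overlapping pattern of rectangular blocks $R$, and attach to each $R$ the \emph{good} event $\mathcal{G}_R$ that the restriction of $\mathbf{H}_\mu$ to $R\times K_N$ contains a single cluster $C$ that reaches all four sides of $R$ and meets every fibre $\{v\}\times K_N$, $v\in R$, in at least $\beta N$ layers, for a constant $\beta=\beta(p)>\tfrac12$. Since $\mathcal{G}_R$ depends only on edges of $\mathbf{H}_\mu$ inside $R\times K_N$, and fibres over $\mathbb{Z}^2$-vertices at graph-distance $\ge 2$ carry independent edge-states, the family $(\mathcal{G}_R)_R$ is a $1$-independent bond percolation process on a rescaled copy of $\mathbb{Z}^2$. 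The key point for gluing is that two blocks $R,R'$ adjacent in the tiling share a full fibre-column, and in each shared fibre the two clusters occupy $>N/2$ layers each, hence meet in an actual vertex; so an infinite cluster of good blocks yields an infinite open cluster in $\mathbf{H}_\mu$. Granting the renormalisation estimates, it then suffices to choose $N$, and then the block scale, so that $\mathbb{P}[\mathcal{G}_R]>0.8639$ uniformly over blocks and over the choice of $1$-ipm $\mu$; the good blocks then percolate and so does $\mathbf{H}_\mu$.

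The crux is thus to show $\mathbb{P}[\mathcal{G}_R]\to1$ as $N\to\infty$ for every fixed $p>4-2\sqrt3$, which amounts to building the spanning cluster $C$ \emph{inside} a single block --- that is, to stitching together, across the $\mathbb{Z}^2$-edges of $R$, the large clusters that each fibre already contains. Two features of $1$-ipms on $\mathbb{Z}^2\times K_N$ do the work here. First, for each edge $vv'$ of $\mathbb{Z}^2$ the $N$ matching edges $(v,i)(v',i)$, $i\in[N]$, are pairwise at graph-distance at least $1$ and hence \emph{mutually} independent, so the set of open matching edges over $vv'$ has size concentrated around its mean $\ge pN$ and any prescribed bad pattern on them has probability that is $N^{-\omega(1)}$ --- this is precisely what makes the thickness $N$ (and, in the pseudorandom version, the degree $\omega(\log n)$) effective against non-local obstruction. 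Second, the fibre-cluster structure of $\mathbf{H}_\mu$ has bounded-range dependence over $\mathbb{Z}^2$ and can be explored fibre-column by fibre-column. Combining these with the component-evolution estimates for $1$-ipms on $K_N$ developed in this paper, one shows that with probability $1-o_N(1)$ each fibre over $R$ carries a cluster on at least $\beta(p)N>\tfrac{N}{2}$ of its layers, and that the giant of $v$ can be joined to that of each neighbour $v'$ in $R$ through an open matching edge in the $\ge(2\beta-1)N$ common layers; a union bound over the edges of $R$ welds all the fibre-giants into $C$. The precise value $4-2\sqrt3$ enters here as the threshold above which this stitching can be guaranteed uniformly over all admissible $\mu$, mirroring (and dovetailing with) the finite-$2$-percolation construction of \cite{DayFalgasRavryHancock20}.

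I expect the main obstacle to lie exactly in that last assertion. The difficulty is that although each fibre carries a large cluster and the matching edges over $vv'$ are individually well behaved, an adversary can \emph{correlate} the layers lying in the fibre-$v$ cluster with the layers over which the matching edge to $v'$ tends to be closed: a matching edge at $(v,i)$ shares the vertex $(v,i)$ with clique edges of the fibre, so ``layer $i$ lies in the fibre-$v$ cluster'' is not independent of whether that matching edge is open. Bounding, over all $1$-ipms subject to the density constraint $\ge p$ and to $1$-independence, how severely this correlation can be arranged is an extremal problem whose answer is precisely what singles out $4-2\sqrt3$ --- above it the stitching cannot be prevented, and below it the construction of \cite{DayFalgasRavryHancock20} shows it can fail. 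Turning this into a proof requires revealing the fibre clusters in a form --- for instance, exposing all clique edges at $v$ except those meeting the relevant layer --- that leaves intact exactly the matching-edge randomness one needs, and then checking that these revealings can be performed consistently throughout the block without disturbing the $1$-independence of $(\mathcal{G}_R)_R$. Getting the extremal bound and the independence bookkeeping to cohere, so that $\mathbb{P}[\mathcal{G}_R]$ can be pushed above $p_{1,c}(\mathbb{Z}^2)$ exactly in the regime $p>4-2\sqrt3$, is the heart of the argument.
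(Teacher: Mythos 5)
Your treatment of part (ii) is exactly right and is the paper's proof. For part (i), your high-level strategy --- renormalise to a $1$-independent process on $\mathbb{Z}^2$ and invoke $p_{1,c}(\mathbb{Z}^2)<0.8639$, with the number $4-2\sqrt{3}$ emerging as the threshold above which fibre-giants over adjacent $\mathbb{Z}^2$-vertices can be stitched through the matching edges --- is indeed the paper's strategy, and your remark that the $>N/2$ overlaps make the gluing automatic is the correct observation. But there are two issues, one minor and one serious.

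The minor issue is that your block-scale renormalisation is more machinery than needed. The paper renormalises at the scale of a single $\mathbb{Z}^2$-edge: for each $uv\in E(\mathbb{Z}^2)$, declare $uv$ open in a new model $\mathbf{H}_\nu$ if and only if $\mathbf{H}_\mu[\{u,v\}\times[N]]$ has a component covering strictly more than half the vertices in each fibre. Because this event depends only on edges inside $\{u,v\}\times[N]$, the resulting $\nu$ is already a $1$-ipm on $\mathbb{Z}^2$ (no rescaling, no blocks $R$, no union bound over $E(R)$, no independence bookkeeping for overlapping blocks), and a path in $\mathbf{H}_\nu$ lifts to a path in $\mathbf{H}_\mu$ because the two components over a shared vertex $v$ each take $>N/2$ layers and hence intersect. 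Your block construction is not wrong, but it reintroduces exactly the dependence bookkeeping worries you flag at the end, all of which vanish with the edge-scale version.

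The serious issue is that your proposal does not prove the one thing that actually carries the weight: that for fixed $p>4-2\sqrt{3}$, every $\mu\in\mathcal{M}_{1,p}(K_2\times K_N)$ makes the fibre-spanning event hold with probability $1-o_N(1)$. You correctly diagnose why this is nontrivial (a $1$-ipm can correlate which layers lie in the fibre-giant with which matching edges are closed) and correctly assert that the extremal answer is $4-2\sqrt{3}$, but the sketch you give --- ``revealing the fibre clusters in a form... that leaves intact exactly the matching-edge randomness one needs, and then checking that these revealings can be performed consistently'' --- is not an argument; it is a restatement of the difficulty. The paper's solution is the content of Theorem~\ref{theorem: threshold for left meets right}\cref{LMR_i}, proved via Lemma~\ref{lemma: tripartition probabilistic part} (deterministic concentration of edge counts in any fixed tripartition of a fibre, using a path-decomposition of the clique into matchings plus Chernoff) and Lemma~\ref{lemma: system has no solution} (the $9$-variable system $\mathcal{S}(p)$ of inequalities describing the relative sizes of the colour classes has no solution once $p>4-2\sqrt{3}$). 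Nothing resembling the system $\mathcal{S}(p)$ or its infeasibility analysis appears in your proposal, so the threshold value is asserted rather than derived. As written, the proposal reduces Theorem~\ref{theorem: 1-indep percolation in Z^2 times Kn}(i) to an unproven claim that is essentially equivalent to Theorem~\ref{theorem: threshold for left meets right}(i).
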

\noindent As a corollary to the key result in our proof of Theorem~\ref{theorem: 1-indep percolation in Z^2 times Kn}, we also obtain a solution to the problem of Day, Falgas--Ravry and Hancock on long paths in $1$-independent percolation, Problem~\ref{problem: Day FR Hancock} above:
\begin{theorem}\label{theorem: answer to Day FR Hancock question}
	$\lim_{n\rightarrow \infty}p_{1, LP}\left(\mathbb{Z}\times K_n\right)=4-2\sqrt{3}$.
\end{theorem}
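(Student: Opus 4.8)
The plan is to derive this from the crossing estimate that powers the proof of \Cref{theorem: 1-indep percolation in Z^2 times Kn}(i), together with a short chaining argument; the lower bound $\lim_n p_{1,LP}(\mathbb{Z}\times K_n)\ge 4-2\sqrt3$ is already in hand, being immediate from the result of Day, Hancock and the first author that this limit lies in $[4-2\sqrt3,5/9]$. So it remains to show that for each fixed $p>4-2\sqrt3$ there is an $N$ with $p_{1,LP}(\mathbb{Z}\times K_N)\le p$: this gives $\lim_n p_{1,LP}(\mathbb{Z}\times K_n)\le p$ by monotonicity, hence $\le 4-2\sqrt3$ as $p\downarrow 4-2\sqrt3$, matching the lower bound. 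Unwinding the definition, we must produce, for every $\mu\in\mathcal{M}_{1,p}(\mathbb{Z}\times K_N)$ and every $\ell\in\mathbb{N}$, a positive-probability event on which $\mathbf{H}_\mu$ contains a path of length $\ell$.

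The key input I would invoke is (essentially) the following robust crossing statement, which I expect to be the heart of the proof of \Cref{theorem: 1-indep percolation in Z^2 times Kn}(i): for every $p>4-2\sqrt3$ and every $\eps>0$ there are constants $m\ge2$ and $N$ such that for all $n\ge N$ and every $\mu\in\mathcal{M}_{1,\ge p}(P_m\times K_n)$ (with $P_m$ the path on $m$ vertices, so $P_m\times K_n$ is a cylinder of $m$ consecutive fibres), with probability at least $1-\eps$ the graph $\mathbf{H}_\mu$ has a connected subgraph meeting more than half the vertices of each of its two end-fibres. (If the estimate instead only guarantees a connected subgraph reaching a fixed positive fraction $\delta n<n/2$ of each end-fibre, I would combine it with the fact --- part of our analysis of component evolution in $1$-independent random graphs on $K_n$ --- that a $1$-ipm on $K_n$ with all edge-probabilities $\ge p>\tfrac12$ almost surely has a component covering all but $o(n)$ of the vertices, so that two $\delta n$-sets in a common fibre necessarily lie in one component; below I assume the cleaner version with the majority conclusion.) Fix $p>4-2\sqrt3$, set $\eps:=\tfrac{1}{3e}$, let $m\ge2$ and $N$ be as provided, and take any $\ell\in\mathbb{N}$ and $\mu\in\mathcal{M}_{1,p}(\mathbb{Z}\times K_N)$. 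For $i=0,1,\dots,\ell-1$ let $B_i$ be the sub-cylinder of $\mathbb{Z}\times K_N$ on the vertices with first coordinate in $\{i(m-1),\dots,(i+1)(m-1)\}$, so that $B_i\cong P_m\times K_N$ and $B_i$, $B_{i+1}$ meet exactly in the fibre $F_{i+1}$ at first coordinate $(i+1)(m-1)$. Let $G_i$ be the event that $\mathbf{H}_\mu$ contains, within $B_i$, a connected subgraph $C_i$ with $|C_i\cap F_i|>N/2$ and $|C_i\cap F_{i+1}|>N/2$; then $\prob{G_i}\ge1-\eps$, and $G_i$ depends only on the edges of $\mathbf{H}_\mu$ inside $B_i$.

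On the event $\bigcap_{i=0}^{\ell-1}G_i$, for each $i$ the sets $C_i\cap F_{i+1}$ and $C_{i+1}\cap F_{i+1}$ both have size exceeding $N/2$ and hence meet, so $C:=C_0\cup\dots\cup C_{\ell-1}$ is a connected subgraph of $\mathbf{H}_\mu$ touching each of $F_0,F_1,\dots,F_\ell$. Since any path of $\mathbb{Z}\times K_N$ joining $F_0$ to $F_\ell$ has length at least $\ell(m-1)\ge\ell$, $C$ contains a path of length at least $\ell$, and therefore $\mathbf{H}_\mu$ contains one of length exactly $\ell$. It remains to bound $\prob{\bigcap_i G_i}$ from below. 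When $|i-j|\ge2$ the blocks $B_i$, $B_j$ are vertex-disjoint and at graph distance $m-1\ge1$, so the $1$-independence of $\mu$ makes $G_i$ independent of $G_j$; thus $\{\overline{G_i}\}$ is a family of events of probability $\le\eps=\tfrac{1}{3e}$ and dependency degree at most $2$, so the Lov\'asz Local Lemma gives $\prob{\bigcap_i G_i}>0$. Hence $p_{1,LP}(\mathbb{Z}\times K_N)\le p$, which by the reduction in the first paragraph completes the proof.

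The hard part is not the chaining, which is routine once set up, but obtaining the crossing estimate in exactly this robust form: a lemma merely guaranteeing \emph{some} crossing of $P_m\times K_n$ is useless here, because the crossings of two adjacent blocks could hit the shared fibre in disjoint vertex sets and so fail to link up. What is needed is a crossing reaching a majority (or a sufficiently large fixed fraction, used with the giant-component fact) of each end-fibre, uniformly over all $1$-independent measures with edge-probabilities $\ge p$; it is here that the constant $4-2\sqrt3$ is pinned down, and establishing this --- as part of \Cref{theorem: 1-indep percolation in Z^2 times Kn} --- is the substantive work. A lesser technical point, handled by appealing to the Local Lemma rather than to naive independence, is the dependence of the good events $G_i$ at the block seams.
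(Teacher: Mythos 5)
Your proposal correctly identifies the key input --- the crossing estimate that a $1$-ipm on $K_2\times K_n$ with edge probabilities $\geq p>4-2\sqrt{3}$ produces, whp, a connected subgraph meeting more than half of each end-fibre (this is exactly Theorem~\ref{theorem: threshold for left meets right}\cref{LMR_i}, and it already holds with $m=2$, so your hedge allowing $m$ to grow is unnecessary) --- and the chaining-plus-Lov\'asz-Local-Lemma argument you build on it is sound. The paper packages the same idea slightly differently: it abstracts the crossing estimate into a renormalisation lemma (Lemma~\ref{lemma: renormalisation}) that couples $\mathbf{(\mathbb{Z}\times G_n)}_\mu$ with a $1$-ipm on $\mathbb{Z}$ of edge probability $\geq 1-\eps$, and then invokes $p_{1,LP}(\mathbb{Z})=3/4$ from~\cite{DayFalgasRavryHancock20}; but the footnote to that citation explicitly observes that the weaker Local-Lemma bound $p_{1,LP}(\mathbb{Z})\leq 1-1/3e$ suffices, which is precisely the step you carry out in-line. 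So the two arguments are essentially the same, the only other cosmetic difference being that the paper first proves the result for weakly pseudorandom $G_n$ (Theorem~\ref{theorem: pseudo random graphs long paths}) and then specialises to $K_n$.
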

 \noindent In fact, we are able to show the conclusions of Theorems~\ref{theorem: 1-indep percolation in Z^2 times Kn} and~\ref{theorem: answer to Day FR Hancock question} still hold if we replace the complete graph~$K_n$ by a suitable \emph{pseudorandom graph}.  Recall that the study of pseudorandom graphs originates in the ground-breaking work of Thomason~\cite{Thomason87}. In this paper we shall use the following notion of weak pseudorandomness (see Condition (3) in the survey of Krivelevich and Sudakov~\cite{KrivelevichSudakov06}):
\begin{definition}
Let $q=q(n)$ be a sequence in~$[0,1]$. A sequence $(G_n)_{n \in \mathbb{N}}$ of $n$-vertex graphs is \emph{weakly $q$-pseudorandom} if 
\begin{align*}\max \left\{\left\vert e(G_n[U]) - q\frac{\vert U\vert^2}{2}\right\vert: \  U\subseteq V(G_n)\right\}=o(qn^2).\end{align*}	
\end{definition}
\noindent Note that if $(G_n)_{n \in \mathbb{N}}$ is a sequence of weakly $q$-pseudorandom graphs, then for any $U_1, U_2 \subseteq V(G_n)$ with $U_1 \cap U_2 = \varnothing$, we have 
\[e(G_n[U_1, U_2]) = q\s{U_1}\s{U_2} + o(qn^2).\] 
\begin{theorem}\label{theorem: pseudo random graphs} Let $q=q(n)$ satisfy $nq(n)\gg \log n$. Then for any sequence  $(G_n)_{n \in \mathbb{N}}$ of $n$-vertex graphs which is weakly $q$-pseudorandom, we have $\lim_{n\rightarrow \infty}p_{1,c}(\mathbb{Z}^2\times G_n )=4-2\sqrt{3}$.	
\end{theorem}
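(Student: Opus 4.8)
Our plan is to re-run the proof of Theorem~\ref{theorem: 1-indep percolation in Z^2 times Kn}, checking that the complete graph $K_n$ enters that argument only through properties enjoyed by every weakly $q$-pseudorandom sequence with $nq(n)\gg\log n$, and that the relevant error terms are of lower order (so that they are absorbed for all large $n$, not merely for a single value of $n$). There are two directions. The lower bound $p_{1,c}(\mathbb{Z}^2\times G_n)\ge 4-2\sqrt3$ requires no new ideas: by the construction of Day, Falgas-Ravry and Hancock it suffices that $\mathbb{Z}^2\times G_n$ have the finite $2$-percolation property, which holds for all large $n$ exactly as for $\mathbb{Z}^2\times K_n$ (it needs only that $G_n$ be nonempty, and $e(G_n)=(1+o(1))qn^2/2>0$). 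So the content lies in the upper bound: for each fixed $p>4-2\sqrt3$ one must show $p_{1,c}(\mathbb{Z}^2\times G_n)\le p$ for all sufficiently large $n$; letting $p\downarrow 4-2\sqrt3$ then yields $\limsup_n p_{1,c}(\mathbb{Z}^2\times G_n)\le 4-2\sqrt3$, and together with the lower bound this is the theorem.

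The upper bound rests on a box-crossing lemma for $\mathbb{Z}\times G_n$, which is also the ``key result'' behind Theorem~\ref{theorem: answer to Day FR Hancock question}: for every $p>4-2\sqrt3$ and every $\eps>0$ there exist $\ell_0,n_0$ such that for all $\ell\ge\ell_0$, $n\ge n_0$ and all $\mu\in\mathcal{M}_{1,\ge p}([0,\ell]\times G_n)$, with probability at least $1-\eps$ the graph $\mathbf{H}_\mu$ contains a path from the fibre over $0$ to the fibre over $\ell$. Granting the lemma, one percolates on $\mathbb{Z}^2\times G_n$ by a standard renormalisation: tile $\mathbb{Z}^2$ by $\ell\times\ell$ sub-grids, declare a pair of adjacent tiles \emph{good} if the crossings supplied by the lemma are present in the fibres over those tiles and link up across the matching connecting them, and note that the good-pair indicators depend only on the fibres over (and the matchings within) those two tiles, so that they form a $1$-independent bond model on a rescaled copy of $\mathbb{Z}^2$. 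Choosing $\eps$ small and $\ell$ large enough that the good-probability exceeds the bound $0.8639\ge p_{1,c}(\mathbb{Z}^2)$ of Balister--Bollob\'as--Walters, an infinite good cluster exists almost surely and forces an infinite component of $\mathbf{H}_\mu$; since $\mu$ was arbitrary, $p_{1,c}(\mathbb{Z}^2\times G_n)\le p$.

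To prove the box-crossing lemma we would track the reachable-set process: let $R_i\subseteq V(G_n)$ be the set of vertices of the fibre over $i$ joined to the fibre over $0$ inside $\mathbf{H}_\mu$ restricted to columns $0,\dots,i$, so the lemma asks that $R_\ell\ne\varnothing$ with probability $\ge 1-\eps$. There are two regimes. In the \emph{large regime}, once $|R_i|\ge\delta n$ the matching edges between consecutive fibres are mutually independent (being a matching, they lie at pairwise graph-distance $\ge1$), so Chernoff bounds together with weak $q$-pseudorandomness (every two disjoint linear-size sets span $(1+o(1))q|U_1||U_2|$ host-edges) show that $|R_{i+1}|$ stays at least a fixed positive fraction of the fibre, and that this persists for all $j\le\ell$, with the failure probability beaten by the relevant $\mathrm{poly}(n)$-size union bounds precisely because $nq\gg\log n$. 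In the \emph{small regime} ($|R_i|$ up to $\delta n$), concentration is unavailable and one must bound how adversarially a $1$-independent measure can act along the column of fibres and matchings; the conclusion is that for $p$ strictly above $4-2\sqrt3$ the reachable set has, at each of $O_\eps(1)$ consecutive steps, a uniformly positive chance either to grow by a constant factor or to persist, so the large regime is entered within $\ell_0(p,\eps)$ columns except with probability $\eps$. The threshold $4-2\sqrt3$ is forced by essentially the extremal configuration underlying the Day--Falgas-Ravry--Hancock lower bound.

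The main obstacle is this small regime: pinning down exactly how badly a $1$-ipm can conspire to extinguish a bounded reachable set --- uniformly over all such measures and over all boundary conditions inherited from neighbouring tiles --- and matching this against the constant $4-2\sqrt3$ is where essentially all the work lies. A secondary point of care is the renormalisation step: since the good-pair process is only $1$-independent, the crossing probability genuinely has to be boosted to $1-\eps$ with $\eps$ small enough to clear $0.8639$ rather than merely $1/2$, so one must verify that the boosting is uniform across the transition between the two regimes --- in particular that the large regime is entered, and the fibres filled, fast enough that the failure probability over an entire $\ell\times\ell$ tile still stays below $\eps$.
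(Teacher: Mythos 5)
Your top-level architecture agrees with the paper --- lower bound via the finite $2$-percolation property of $\mathbb{Z}^2\times G_n$ and \cite[Corollary 24]{DayFalgasRavryHancock20}, upper bound by manufacturing a $1$-ipm on $\mathbb{Z}^2$ with edge probability exceeding $0.8639 > p_{1,c}(\mathbb{Z}^2)$ --- but the technical core you propose is genuinely different from the paper's and, crucially, is not established. The paper does not use a box-crossing lemma for long strips $[0,\ell]\times G_n$, nor a reachable-set process tracked column by column. Its key lemma, Theorem~\ref{theorem: threshold for left meets right}, is a sharp threshold for a \emph{single} pair of fibres $K_2\times G_n$: for $p>4-2\sqrt3$ and any $\mu\in\mathcal{M}_{1,p}(K_2\times G_n)$, whp one component contains more than half of each fibre. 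Feeding this into the Renormalisation Lemma~\ref{lemma: renormalisation} with scale $\ell=1$ gives the $1$-ipm on $\mathbb{Z}^2$ directly; no long-strip crossing is needed. The proof of the key lemma is structural rather than dynamic: edge counts inside each fibre and across parts are concentrated (via a path/matching decomposition plus Chernoff, Lemma~\ref{lem:edge_concentration}), each fibre is shown to have a unique giant of order $\geq(\theta-o(1))n$, and if the two giants fail to connect, the relative sizes $|V_{ij}|/n$ of the induced $9$-partition form a $3\times 3$ matrix lying in the system $\mathcal{S}(p_\star)$ with $p_\star>4-2\sqrt3$, which Lemma~\ref{lemma: system has no solution} shows is empty.

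The genuine gap is the one you yourself flag as the main obstacle: the ``small regime'' of your reachable-set process is left unresolved, and this is precisely where the constant $4-2\sqrt3$ would have to be extracted, so your upper bound is not proven. It is also unclear that a column-by-column drift analysis would naturally surface that sharp constant: in the paper it arises from the algebraic coincidence $\theta\sqrt p=1-p$ at $p=4-2\sqrt3$ (see \eqref{eq: 4-2sqrt 3}) and from the infeasibility of $\mathcal{S}(p)$, i.e.\ from a static, global constraint on how a single pair of matched fibres can be partitioned when the giants do not join, not from a local growth estimate on a propagating set. A secondary inconsistency: you tile $\mathbb{Z}^2$ by $\ell\times\ell$ boxes but your crossing lemma concerns the one-dimensional box $[0,\ell]\times G_n$; these do not line up, and the paper sidesteps the issue entirely by renormalising at scale $\ell=1$ and comparing against $p_{1,c}(\mathbb{Z}^2)<0.8639$.
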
	
\begin{theorem}\label{theorem: pseudo random graphs long paths} Let $q=q(n)$ satisfy $nq(n)\gg \log n$. Then for any sequence  $(G_n)_{n \in \mathbb{N}}$ of $n$-vertex graphs which is weakly $q$-pseudorandom, we have $\lim_{n\rightarrow \infty}p_{1,
	LP}(\mathbb{Z}\times G_n )=4-2\sqrt{3}$.	
\end{theorem}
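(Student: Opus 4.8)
The plan is to prove the bounds $\liminf_{n\to\infty}p_{1,LP}(\mathbb Z\times G_n)\ge 4-2\sqrt3$ and $\limsup_{n\to\infty}p_{1,LP}(\mathbb Z\times G_n)\le 4-2\sqrt3$ separately. The lower bound needs neither new ideas nor pseudorandomness: $G_n$ is a spanning subgraph of $K_n$, so $\mathbb Z\times G_n$ is a spanning subgraph of $\mathbb Z\times K_n$, and restricting any $\mu\in\M_{1,p}(\mathbb Z\times K_n)$ to the edge-set of $\mathbb Z\times G_n$ produces a member of $\M_{1,p}(\mathbb Z\times G_n)$ (both $1$-independence and all edge-probabilities survive the restriction) in whose random graph every path is already a path of the original random graph. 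Hence $p_{1,LP}$ is monotone non-decreasing under passing to a spanning subgraph, so for every $n$ one has $p_{1,LP}(\mathbb Z\times G_n)\ge p_{1,LP}(\mathbb Z\times K_n)\ge 4-2\sqrt3$, the last inequality holding because $\big(p_{1,LP}(\mathbb Z\times K_n)\big)_n$ is non-increasing with limit at least $4-2\sqrt3$ by the work of Day, Falgas-Ravry and Hancock.

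For the upper bound, fix $p>4-2\sqrt3$. The only substantial ingredient is the key result established in the proof of \cref{theorem: pseudo random graphs}, which I would use as a black box: for every $\eps>0$ there are $L=L(p,\eps)$ and $n_0=n_0(p,\eps)$ such that for all $n\ge n_0$ and every $1$-ipm $\nu$ on $P\times G_n$, where $P$ is the path on $\{0,1,\dots,L\}$, in which each edge is present with probability at least $p$, the random graph $\mathbf H_\nu$ contains with probability at least $1-\eps$ a connected subgraph meeting more than $n/2$ of the vertices of the ``column'' $\{0\}\times V(G_n)$ and more than $n/2$ of the vertices of the column $\{L\}\times V(G_n)$. (This is exactly the ``good crossing'' event that drives the renormalisation proof of \cref{theorem: 1-indep percolation in Z^2 times Kn,theorem: pseudo random graphs}, where its probability must exceed the Balister--Bollob\'as--Walters bound $p_{1,c}(\mathbb Z^2)<0.8639$; in particular $\eps$ may and will be taken below $1/4$.)

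I would then renormalise along $\mathbb Z$. Fix such an $\eps<1/4$ with its $L,n_0$, take $n\ge n_0$ and $\mu\in\M_{1,p}(\mathbb Z\times G_n)$, and for $j\in\mathbb Z$ set $\mathcal S_j:=[jL,(j+1)L]\times V(G_n)$ and let $E_j$ be the event that the subgraph of $\mathbf H_\mu$ induced by $\mathcal S_j$ contains a connected subgraph $C_j$ meeting more than $n/2$ of the vertices of the column $\{jL\}\times V(G_n)$ and more than $n/2$ of the vertices of the column $\{(j+1)L\}\times V(G_n)$. Since the restriction of $\mu$ to the edges induced by $\mathcal S_j$ is a $1$-ipm on a copy of $P\times G_n$ with all edge-probabilities equal to $p$, the key result gives $\mathbb P[E_j]\ge 1-\eps$ for all $j$; and since $E_j$ depends only on the states of the edges induced by $\mathcal S_j$, while for $|j-j'|\ge2$ the sets $\mathcal S_j,\mathcal S_{j'}$ are disjoint so that the edge-sets they induce are at graph distance at least $1$ in $\mathbb Z\times G_n$, the family $(E_j)_{j\in\mathbb Z}$ --- with $E_j$ playing the role of the edge $\{j,j+1\}$ of $\mathbb Z$ --- is a $1$-ipm on $\mathbb Z$ of edge-probability at least $1-\eps>3/4=p_{1,LP}(\mathbb Z)$. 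By definition of $p_{1,LP}$ (and a routine thinning to edge-probability exactly $1-\eps$), for every $k\in\mathbb N$ one has, with positive probability, a $j$ with $E_j\cap E_{j+1}\cap\dots\cap E_{j+k-1}$; on this event the crossings glue, since for each $i\in\{j,\dots,j+k-2\}$ the subgraphs $C_i$ and $C_{i+1}$ each meet more than $n/2$ of the $n$ vertices of the column $\{(i+1)L\}\times V(G_n)$ and hence share a vertex, so $C_j\cup\dots\cup C_{j+k-1}$ is a connected subgraph of $\mathbf H_\mu$ meeting both $\{jL\}\times V(G_n)$ and $\{(j+k)L\}\times V(G_n)$ and therefore containing a path of length at least $kL$. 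Taking $k=\lceil\ell/L\rceil$ gives $\mathbb P_\mu[\mathbf H_\mu\text{ contains a path of length }\ell]>0$ for every $\ell$; as $\mu$ and $n\ge n_0$ were arbitrary, $p_{1,LP}(\mathbb Z\times G_n)\le p$ for all $n\ge n_0$, which with the lower bound yields $\lim_{n\to\infty}p_{1,LP}(\mathbb Z\times G_n)=4-2\sqrt3$. (\cref{theorem: answer to Day FR Hancock question} is then the special case $q\equiv1$, as $K_n$ is weakly $1$-pseudorandom.)

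I expect the one real obstacle to be the black-boxed key result: showing that a crossing of a bounded-width segment of $\mathbb Z\times G_n$ succeeds with probability close to $1$ once the edge-probability exceeds $4-2\sqrt3$, and --- crucially for the gluing step above --- that the crossing subgraph can be taken to meet more than half of each of the two bounding columns, so that consecutive crossings automatically intersect. This is where both the extremal constant $4-2\sqrt3$ and the weak pseudorandomness of $G_n$ genuinely enter the argument, and it is proved as part of \cref{theorem: pseudo random graphs}. The renormalisation itself is routine, but one point is worth emphasising: it is essential to appeal to $p_{1,LP}(\mathbb Z)=3/4$ and not to $p_{1,c}(\mathbb Z)$, for the renormalised model on $\mathbb Z$ has high edge-density yet $p_{1,c}(\mathbb Z)=1$, so nothing can be concluded about infinite paths --- only about arbitrarily long finite ones, which is all that is needed.
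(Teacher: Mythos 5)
Your proposal is correct and follows essentially the same route as the paper: renormalise $\mathbb{Z}\times G_n$ to a $1$-ipm on $\mathbb{Z}$ using the ``Left meets Right'' crossing threshold, compare with $p_{1,LP}(\mathbb{Z})=3/4$, and handle the lower bound separately; the paper packages the renormalisation step as Lemma~\ref{lemma: renormalisation} and invokes it with $H=\mathbb{Z}$. Two small remarks. First, you do not need an $L$ depending on $p,\eps$: the black box you want is precisely Theorem~\ref{theorem: threshold for left meets right}\cref{LMR_i} with $L=1$ (a copy of $K_2\times G_n$), which already gives the crossing probability $1-o(1)$; taking $L>1$ adds nothing and would require a short union-and-gluing argument to reduce back to the $L=1$ case anyway. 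Second, your lower-bound argument via restriction to the spanning subgraph $\mathbb{Z}\times G_n\subseteq\mathbb{Z}\times K_n$ is a clean and fully rigorous alternative to the paper's appeal to the Day--Falgas-Ravry--Hancock construction ``adapted mutatis mutandis'' to a general $G_n$; the restriction preserves both $1$-independence (vertex-disjointness of edge sets is independent of the ambient edge set) and edge probabilities, and any path in the restricted model is a path in the original, so the monotonicity $p_{1,LP}(\mathbb{Z}\times G_n)\geq p_{1,LP}(\mathbb{Z}\times K_n)\geq 4-2\sqrt{3}$ is immediate.
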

\noindent We conjecture that the conclusion of Theorem~\ref{theorem: 1-indep percolation in Z^2 times Kn} still holds if we replace the complete graph~$K_n$ by an $n$-dimensional hypercube.
\begin{conjecture}\label{conjecture: Qn}
$\lim_{n\rightarrow \infty}p_{1,c}(\mathbb{Z}^2\times Q_n )=4-2\sqrt{3}$.
\end{conjecture}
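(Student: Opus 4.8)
Since Conjecture~\ref{conjecture: Qn} is open, what follows is a plan of attack rather than a proof, and it should be said at the outset that the conjecture looks at least as hard as Problem~\ref{problem: Balister Bollobas}: the vertex set $\{0,1\}^n$ induces a copy of $Q_n$ inside $\mathbb{Z}^n$, so $\mathbb{Z}^2\times Q_n$ is an induced subgraph of $\mathbb{Z}^{n+2}$, and since the restriction of a $1$-independent measure to an induced subgraph is again a $1$-independent measure with the same edge-marginals, a proof of Conjecture~\ref{conjecture: Qn} would give $p_{1,c}(\mathbb{Z}^{n+2})\le p$ for every fixed $p>4-2\sqrt 3$ and all large~$n$, hence --- together with the Day--Falgas-Ravry--Hancock lower bound $p_{1,c}(\mathbb{Z}^d)\ge 4-2\sqrt 3$ --- would answer Problem~\ref{problem: Balister Bollobas} with $4-2\sqrt 3$. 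The lower bound half of the conjecture, $p_{1,c}(\mathbb{Z}^2\times Q_n)\ge 4-2\sqrt 3$ for every~$n$, should on the other hand be routine: one checks that $\mathbb{Z}^2\times Q_n$ has the finite $2$-percolation property of~\cite{DayFalgasRavryHancock20} --- it contains $\mathbb{Z}^2\times K_2$ together with plenty of room around every edge, and the verification given there for $\mathbb{Z}^2\times K_n$ should adapt directly --- and then quotes \cite[Theorem 7]{DayFalgasRavryHancock20}. So all the difficulty is in the upper bound: given $p>4-2\sqrt 3$, exhibit $N$ with $p_{1,c}(\mathbb{Z}^2\times Q_N)\le p$.

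For the upper bound the plan is to run the renormalisation underlying Theorem~\ref{theorem: 1-indep percolation in Z^2 times Kn}(i). Fix a large integer $\ell=\ell(p)$, tile $\mathbb{Z}^2$ by translates of the box $B=[\ell]^2$, so that $\mathbb{Z}^2\times Q_n$ is tiled by blocks $B\times Q_n$, and call a block \emph{good} if the $1$-independent measure restricted to that block, together with its interface to the four neighbouring blocks, contains a connected ``crossing cluster'' meeting at least a $\tfrac23$-fraction of the $Q_n$-fibres on each of the four sides of~$B$, so that the crossing clusters of any two adjacent good blocks are forced to intersect and merge. Goodness of a block is determined by the states of edges within bounded distance of it, so the process of good blocks on $\mathbb{Z}^2$ is $k$-independent for some bounded~$k$; after rescaling $\mathbb{Z}^2$ by a constant this becomes a $1$-independent site process, and provided one can push $\mathbb{P}[\text{a block is good}]$ above $p_{1,c}(\mathbb{Z}^2)$ --- for which the bound $p_{1,c}(\mathbb{Z}^2)<0.8639$ of~\cite{BalisterBollobasWalters05} leaves ample room --- the good blocks percolate, and hence so does the original model.

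The whole problem thus reduces to the one-dimensional input: for a $1$-independent measure on $[\ell]^2\times Q_n$ in which every edge is present with probability at least $p>4-2\sqrt 3$, the probability of a crossing cluster as above should tend to~$1$ as $n\to\infty$, uniformly over such measures --- this is the $Q_n$-analogue of Theorems~\ref{theorem: answer to Day FR Hancock question} and~\ref{theorem: pseudo random graphs long paths}. The natural mechanism is a component-evolution statement: a $1$-independent measure on $Q_n$ with edge-probability above the appropriate threshold yields, with probability $1-o(1)$, a connected component meeting a $(1-o(1))$-fraction of $V(Q_n)$, and these near-spanning components in consecutive $\mathbb{Z}^2$-layers get stitched together through the layer-to-layer perfect matchings, whose edges are present with probability at least~$p$ and whose states depend only locally on the two layers they join. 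The constant $4-2\sqrt 3$ should enter here, as the exact $1$-independent threshold for a giant component to survive and persist across layers in this product setting, with the extremal non-percolating construction of~\cite{DayFalgasRavryHancock20, FalgasRavry12} being essentially the only obstruction to it.

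The main obstacle is exactly this last step, and it is the reason the statement is only a conjecture: $Q_n$ is \emph{not} weakly pseudorandom, so Theorem~\ref{theorem: pseudo random graphs} cannot simply be invoked. Writing $N:=2^n$ for its number of vertices and $q:=2e(Q_n)/N^2=\Theta(n/N)$ for its edge-density, one has $qN=\Theta(\log N)$, so the hypothesis $qN\gg\log N$ of Theorem~\ref{theorem: pseudo random graphs} just fails; worse, a subcube $U\cong Q_{n-1}$ satisfies $e(Q_n[U])=(n-1)2^{n-2}\sim q\s{U}^2$ rather than $q\s{U}^2/2$, a discrepancy of order $\Theta(qN^2)$, while only $2^{n-1}$ edges --- a $\Theta(1/n)$-fraction of $e(Q_n)$ --- cross from a subcube to its complement. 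So $Q_n$ has very sparse cuts, and an adversarial $1$-independent measure can behave badly on a subcube without being detected by edge-counts between large sets. To get around this one would need to replace the pseudorandomness input by the edge-isoperimetric inequality for the cube (Harper, Bernstein, Hart), presumably inducting on the product decomposition $Q_n=Q_{n-1}\times K_2$, in order to argue that a linear-sized $1$-independent cluster cannot be shattered inside the cube and persists in the $\mathbb{Z}^2$-direction. Making such an argument quantitatively sharp enough to recover the exact constant $4-2\sqrt 3$, rather than some larger constant inherited from the weaker expansion of $Q_n$, is where I expect the real work --- and the real difficulty --- to lie.
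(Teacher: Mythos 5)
Conjecture~\ref{conjecture: Qn} is stated as an open problem in the paper, and no proof is given, so there is nothing to compare your attempt against line by line; the right question is whether your analysis of what a proof would require is accurate, and it is. Your observation that the conjecture implies the answer to Problem~\ref{problem: Balister Bollobas} is $4-2\sqrt3$ (via $\mathbb{Z}^2\times Q_n$ being an induced subgraph of $\mathbb{Z}^{n+2}$, restriction of $1$-ipms to induced subgraphs being $1$-ipms, and the Day--Falgas-Ravry--Hancock lower bound) is made in the paper too. The lower bound $p_{1,c}(\mathbb{Z}^2\times Q_n)\geq 4-2\sqrt3$ does follow as you say from the finite $2$-percolation property plus~\cite[Corollary 24]{DayFalgasRavryHancock20}: the verification given for $\mathbb{Z}^2\times G_n$ only uses the $\mathbb{Z}^2$-coordinate, so it carries over verbatim. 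And the paper explicitly identifies the missing ingredient for the upper bound as a component-evolution statement for $1$-independent models on $Q_n$ (Conjecture~\ref{conjecture: Qn component evolution}), which is the $Q_n$-analogue of Theorem~\ref{theorem: component size pseudo-random graph} and is exactly the ``persistent giant component'' input your sketch calls for. Your numerical diagnosis of why Theorem~\ref{theorem: pseudo random graphs} does not apply is also correct on both counts: with $N=2^n$ vertices and density $q=\Theta(n/N)$, the hypothesis $qN\gg\log N$ fails at the boundary, and more seriously a subcube witnesses a discrepancy of order $\Theta(qN^2)$, so $Q_n$ is not remotely weakly $q$-pseudorandom.

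One refinement worth noting: the renormalisation the paper actually uses (Lemma~\ref{lemma: renormalisation}) is lighter than the block-tiling you sketch. Rather than tiling $\mathbb{Z}^2$ by $[\ell]^2$ boxes, defining a four-sided crossing event, and downgrading from $k$-independence to $1$-independence, the paper renormalises a $1$-ipm $\mu$ on $\mathbb{Z}^2\times G$ directly to a $1$-ipm $\nu$ on $\mathbb{Z}^2$ by declaring a single edge $uv\in E(\mathbb{Z}^2)$ open exactly when `$\mathrm{Left\ meets\ Right}$' occurs in $\mathbf{(H\times G)}_\mu[\{u,v\}\times V(G)]$; since `$\mathrm{Left\ meets\ Right}$' requires strictly more than half of each fibre, the clusters glue automatically across consecutive edges, and $1$-independence of $\nu$ is immediate. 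One then just quotes $p_{1,c}(\mathbb{Z}^2)<0.8639$. The same scheme would apply with $G=Q_n$ once a `$\mathrm{Left\ meets\ Right}$' analogue (i.e.\ the $Q_n$ version of Theorem~\ref{theorem: threshold for left meets right}\cref{LMR_i}) were available; your block machinery is not needed, and introducing it only makes the one-dimensional input you would have to prove heavier. Where you are right that the real work lies --- replacing the pseudorandom edge-counting by edge-isoperimetry on the cube, and doing so sharply enough to recover the exact constant $4-2\sqrt3$ --- is precisely what is left open.
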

\noindent Observe that, since $\mathbb{Z}^2 \times Q_n$ is a subgraph of $\mathbb{Z}^{n+2}$ and $p_{1,c}(\mathbb{Z}^{n+2}) \geq 4-2\sqrt{3}$ \cite[Theorem 7]{DayFalgasRavryHancock20}, Conjecture~\ref{conjecture: Qn} implies that the answer to the problem of Balister and Bollob\'as (Problem~\ref{problem: Balister Bollobas} above) is $4-2\sqrt{3}$. In fact, we make the following bolder conjecture:
\begin{conjecture}[$1$-independent percolation in high dimension]\label{conjecture: Zd} There exists $d\geq 3$ such that
	\[p_{1,c}(\mathbb{Z}^d)=4-2\sqrt{3}.\]	
\end{conjecture}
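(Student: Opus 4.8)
\medskip
\noindent\textbf{A plan of attack for Conjecture~\ref{conjecture: Zd}.}
Since $p_{1,c}(\mathbb{Z}^d)\geq 4-2\sqrt3$ holds for every $d$ by \cite[Theorem~7]{DayFalgasRavryHancock20}, and since the sequence $p_{1,c}(\mathbb{Z}^d)$ is non-increasing, the conjecture is equivalent to the assertion that this non-increasing sequence is eventually constant, i.e.\ that \emph{some} finite $d$ already satisfies $p_{1,c}(\mathbb{Z}^d)\leq 4-2\sqrt 3$; unwinding the definition, this says that for this $d$ and \emph{every} fixed $p>4-2\sqrt3$ each measure in $\M_{1,\geq p}(\mathbb{Z}^d)$ percolates. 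The natural route is through the hypercube: because $\mathbb{Z}^2\times Q_n$ is a subgraph of $\mathbb{Z}^{n+2}$, it suffices to prove the strengthened form of Conjecture~\ref{conjecture: Qn} in which a single $n$ works uniformly in $p$, namely $p_{1,c}(\mathbb{Z}^2\times Q_n)\leq 4-2\sqrt 3$ for some finite $n$. As a first step I would try to establish the (weaker) Conjecture~\ref{conjecture: Qn} by adapting the renormalisation machinery developed in this paper for $\mathbb{Z}^2\times K_n$: tile $\mathbb{Z}^2\times Q_n$ by blocks of the form $[L]^2\times Q_n$, declare a block \emph{good} when the $1$-ipm restricted to it contains a connected structure crossing the block's faces in a prescribed way, and show that for $p>4-2\sqrt3$ the good blocks dominate a supercritical $1$-independent percolation on $\mathbb{Z}^2$, so that Theorem~\ref{theorem: 1-indep percolation in Z^2 times Kn} (or the ingredients of its proof) applies.

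The part of the $K_n$ argument that must be genuinely reworked is the analysis \emph{inside} a block. For $K_n$ one exploits the $\binom n2$ edges of the complete graph---and, more generally for Theorem~\ref{theorem: pseudo random graphs}, the weak pseudorandomness of $G_n$---to obtain strong concentration: linear-sized vertex subsets behave like random sets and span well with overwhelming probability. The hypercube $Q_n$ falls just outside the regime $nq\gg\log n$ of Theorem~\ref{theorem: pseudo random graphs} (it has $2^n$ vertices but degree only $n=\log_2|V(Q_n)|$) and is \emph{not} weakly pseudorandom, since it contains sparse, highly unbalanced subsets; the probabilistic estimates used in the $K_n$ proof therefore have no direct analogue. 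Instead one would lean on the vertex-isoperimetric inequality for the cube (Harper's theorem), together with the fact that a uniformly random linear-sized subset of $V(Q_n)$ still has large vertex boundary and good expansion with high probability, and use these to force the open edges inside a block to link up the relevant parts of the cube. In effect one replaces ``pseudorandom counting'' by ``typical isoperimetry'' as the engine producing connectivity inside a block.

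The hard part---and the reason this is only a conjecture---is the uniformity in $p$ needed to pin the critical value down at a \emph{finite} $d$ rather than merely in the limit. In the $K_n$ result one shows percolation for $p=4-2\sqrt3+\eps$ only once $n$ is large in terms of $\eps$, so letting $\eps\to 0$ forces $n\to\infty$; this already makes Conjecture~\ref{conjecture: Qn} nontrivial, and its strengthening (a single $n$ good for all $p>4-2\sqrt3$ at once) is strictly harder. To obtain it one would have to show that, once $n$ exceeds some absolute threshold, the obstruction responsible for the constant $4-2\sqrt3$ is the \emph{only} obstruction on $\mathbb{Z}^{n+2}$: that the extremal construction of \cite[Theorem~7]{DayFalgasRavryHancock20} is strictly subcritical there with a quantitative gap, and that any $1$-ipm on $\mathbb{Z}^2\times Q_n$ with edge-probability bounded below by any $p>4-2\sqrt3$ must contain a ``deterministic skeleton'' running through the cube directions along which one can always route, no matter how close $p$ is to $4-2\sqrt3$. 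Converting the asymptotic advantage of high-dimensional lattices into such a statement about one fixed lattice at the exact threshold is the main obstacle, and I do not see how to carry it out with the techniques available here; on the other hand, even a resolution of the weaker Conjecture~\ref{conjecture: Qn} would already settle Problem~\ref{problem: Balister Bollobas}, and that seems the more realistic target.
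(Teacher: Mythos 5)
This is a conjecture; the paper does not prove it, and you explicitly do not claim to either, so there is no proof to compare against and nothing to mark wrong. Your surrounding analysis is accurate. The reduction is sound: since $p_{1,c}(\mathbb{Z}^d)\geq 4-2\sqrt 3$ for every $d$ by~\cite[Theorem~7]{DayFalgasRavryHancock20} and the sequence is non-increasing, the conjecture is equivalent to the sequence being eventually constant; and since the restriction of any $\mu\in\mathcal{M}_{1,\geq p}(\mathbb{Z}^{n+2})$ to the edge set of $\mathbb{Z}^2\times Q_n$ lies in $\mathcal{M}_{1,\geq p}(\mathbb{Z}^2\times Q_n)$, we have $p_{1,c}(\mathbb{Z}^{n+2})\leq p_{1,c}(\mathbb{Z}^2\times Q_n)$, so it would indeed suffice to prove $p_{1,c}(\mathbb{Z}^2\times Q_n)\leq 4-2\sqrt 3$ for a single finite $n$. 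You also identify the genuine technical obstruction correctly: $Q_n$ is bipartite and far from weakly pseudorandom (with $N=2^n$ vertices, $Nq\approx n$ is not $\gg\log N$), so Lemma~\ref{lem:edge_concentration} and the tripartition estimates of Lemma~\ref{lemma: tripartition probabilistic part} have no direct analogue for the cube, and Theorem~\ref{theorem: pseudo random graphs} only yields percolation at $p=4-2\sqrt 3+\varepsilon$ once $n$ is large in terms of $\varepsilon$, which is strictly weaker than the uniformity in $p$ at a single fixed $d$ that the present conjecture requires. Your closing observation --- that the weaker Conjecture~\ref{conjecture: Qn} already resolves Problem~\ref{problem: Balister Bollobas} and is the more realistic target --- is precisely the remark the paper itself makes immediately after stating Conjecture~\ref{conjecture: Qn}.
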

 \noindent Finally we prove some modest results on component evolution in $1$-independent models on~$K_n$ and on pseudorandom graphs. The main point of these results is that `the two-state measure minimises the size of the largest component', a heuristic which in turn guides our Conjecture~\ref{conjecture: Qn}. Here by the \emph{two-state measure}, we mean the following variant of site percolation, due to Newman (see~\cite{Meester94}):
 \begin{definition}[Two-state measure]
 	Let~$H$ be a graph, and let $p\in [\frac{1}{2}, 1]$. The \emph{two-state measure} $\mu_{2s, p} \in \mathcal{M}_{1,p}(H)$ is constructed as follows: assign to each vertex $v\in V(H)$ a state~$S_v$ independently and uniformly at random, with $S_v=1$ with probability $\theta=\theta(p)=(1+\sqrt{2p-1})/2$ and $S_v=0$ otherwise. Then let $\mathbf{H}_{\mu_{2s, p}}$ be the random subgraph of~$H$ obtained by including an edge if and only if its endpoints are in the same state.
 \end{definition}
\noindent  Day, Hancock and the first author showed in~\cite[Theorem 16]{DayFalgasRavryHancock20} that $\mu_{2s, p}$ minimises the probability of connected subgraphs over all $1$-ipm $\mu \in \mathcal{M}_{1,p}(K_{2n})$. We show below that it also minimises the probability of having a component of size greater than~$n$.  Explicitly, given a set of edges $F\subseteq E(H)$ in a graph~$H$, we let~$C_i(F)$ denote the $i$-th largest connected component in the associated subgraph $(V(H), F)$ of~$H$. Then:	
\begin{proposition}\label{prop: two-state measure minimises prob of compomnent of size 1/2}
	Set $p_{2n}=\frac{1}{2}\left(1-\tan^2\left(\frac{\pi}{4n}\right)\right)$ and $H=K_{2n}$. Then for all $p\in [p_{2n},1]$, 
	\[\min\Bigl\{\mathbb{P}\left[\vert C_1(\mathbf{H}_{\mu})\vert > n \right]: \ \mu \in \M_{1,\geq p}(K_{2n}) \Bigr\}=1-\binom{2n}{n}\left(\frac{1-p}{2}\right)^{n}.\]
\end{proposition}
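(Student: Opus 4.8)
The plan is to prove the two inequalities implicit in the $\min$ separately. Throughout, for $p\geq\tfrac12$ set $\theta=\theta(p)=(1+\sqrt{2p-1})/2$ and note the identity $\theta(1-\theta)=(1-p)/2$, so that the claimed value equals $1-\binom{2n}{n}\theta^n(1-\theta)^n$. Since $\mathbf H_{\mu_{2s,p}}$ is always the disjoint union of a clique on the state-$1$ vertices and a clique on the state-$0$ vertices, $|C_1(\mathbf H_{\mu_{2s,p}})|\leq n$ holds precisely when there are exactly $n$ vertices in each state, an event of probability $\binom{2n}{n}\theta^n(1-\theta)^n$; so for $p\in[\tfrac12,1]$ the inequality $\min\leq(\text{stated value})$ follows immediately by taking $\mu=\mu_{2s,p}$. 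For $p\in[p_{2n},\tfrac12)$, where $\mu_{2s,p}$ is undefined, one instead uses a suitable multi-state variant of site percolation --- a disjoint union of cliques arising from independent vertex states, the number of states and their weights chosen so that every edge is present with probability exactly $p$ and a component of size $>n$ appears with probability exactly $1-\binom{2n}{n}(\tfrac{1-p}{2})^n$; I would carry out this (somewhat fiddly) calculation. Such a measure in fact exists for a range of $p$ reaching below $p_{2n}$, but there, as the lower bound will show, it is no longer optimal.

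The heart of the matter is the lower bound: $\Pr[\,|C_1(\mathbf H_\mu)|\leq n\,]\leq\binom{2n}{n}(\tfrac{1-p}{2})^n$ for every $\mu\in\M_{1,\geq p}(K_{2n})$. Call $A\in\binom{V(K_{2n})}{n}$ \emph{separating} if $\mathbf H_\mu$ has no edge between $A$ and $V\setminus A$, equivalently if $A$ is a union of components of $\mathbf H_\mu$, and let $N=N(\mathbf H_\mu)$ be the number of separating sets; then $N$ is even, since $A$ is separating iff $V\setminus A$ is. Splitting $\{|C_1|\leq n\}$ according to whether $N=0$,
\[\Pr[\,|C_1|\leq n\,]\;\leq\;\Pr[\,N\geq 2\,]+\Pr[\,|C_1|\leq n,\ N=0\,]\;\leq\;\tfrac12\,\mathbb E[N]+\Pr[\,|C_1|\leq n,\ N=0\,],\]
where the second piece is exactly the event that all components of $\mathbf H_\mu$ have size $\leq n$ yet no subset of the component sizes sums to $n$ --- the exceptional shape, typified by $\mathbf H_\mu$ being a perfect matching when $n$ is odd.

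For the main term, $\tfrac12\mathbb E[N]=\tfrac12\sum_A\Pr[A\text{ separating}]$, I would bound each summand using the fact that $\mu_{2s,p}$ minimises the probability of containing any prescribed connected subgraph (\cite[Theorem 16]{DayFalgasRavryHancock20}; for $p<\tfrac12$, the analogous statement with the multi-state extremal measure in place of $\mu_{2s,p}$), applied to the edges of the complete bipartite graph between $A$ and $V\setminus A$: the goal is $\Pr[A\text{ separating}]\leq 2\theta^n(1-\theta)^n=2(\tfrac{1-p}{2})^n$ for each $A$, giving $\tfrac12\mathbb E[N]\leq\binom{2n}{n}(\tfrac{1-p}{2})^n$, with equality for $\mu_{2s,p}$. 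The naive estimate $\Pr[A\text{ separating}]\leq(1-p)^n$ --- obtained by retaining only a perfect matching between $A$ and $V\setminus A$, whose $n$ edges lie on disjoint vertex pairs and so are mutually independent by $1$-independence --- loses a factor $2^{n-1}$ and is not enough; extracting the full strength from all $n^2$ mutually dependent cross-edges is where $1$-independence must be pushed hardest.

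It then remains to absorb the exceptional term $\Pr[\,|C_1|\leq n,\ N=0\,]$ into the slack left by the main-term bound for measures other than $\mu_{2s,p}$. On this event $\mathbf H_\mu$ is a disjoint union of components whose sizes admit no balanced split; I expect that for $p\geq p_{2n}$ this forces $\mathbf H_\mu$ into a shape so restrictive that, via $1$-independence across the components and the edge-probability lower bound, the event is either impossible or rare enough to be absorbed, whereas for $p<p_{2n}$ a genuinely more dispersed $1$-ipm does better than the stated value. The main obstacle, and where I expect the real work to lie, is twofold: proving the bound $\Pr[A\text{ separating}]\leq 2(\tfrac{1-p}{2})^n$ with the exact constant rather than merely the right order of magnitude, and the exceptional-term accounting; it is the interplay of the two that should pin down the precise threshold $p_{2n}=\cos(\pi/(2n))/(1+\cos(\pi/(2n)))$.
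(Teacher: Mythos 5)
Your approach is genuinely different from the paper's, and it leaves two substantial gaps that you yourself flag as unresolved; the paper's lower-bound argument is considerably slicker and sidesteps both of them.

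You split $\{|C_1|\leq n\}$ according to whether some $n$-set $A$ is separating, bound the first piece by $\frac{1}{2}\mathbb{E}[N]$, and aim for $\mathbb{P}[A\text{ separating}]\leq 2\bigl(\frac{1-p}{2}\bigr)^n$ for each $A$. But this estimate is the crux, and you do not prove it. The reference to \cite[Theorem 16]{DayFalgasRavryHancock20} does not apply: that result says $\mu_{2s,p}$ \emph{minimises} the probability that $\mathbf{H}_\mu$ \emph{contains} a prescribed connected subgraph, whereas you need to show that it \emph{maximises} the probability that the \emph{complement} contains a prescribed $K_{n,n}$ --- equivalently, an extremal statement over $\nu\in\mathcal{M}_{1,\leq 1-p}(K_{2n})$, which is a different problem. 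Moreover, even granting that bound, your main term already comes out as exactly $\binom{2n}{n}\bigl(\frac{1-p}{2}\bigr)^n$ with no slack, so there is nothing left with which to absorb $\mathbb{P}[|C_1|\leq n,\ N=0]$; quantifying the interplay between a strictly smaller main term for non-extremal $\mu$ and this exceptional term would be a serious project in itself, and you rightly identify it as where ``the real work'' would lie.

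The paper avoids all of this by counting perfect matchings in the complement. If $|C_1(\mathbf{H}_\mu)|\leq n$, the complete multipartite graph on the component partition has independence number at most $n$ and, by an elementary extremal lemma (Lemma~\ref{lemma: minimising the number of PM}), contains at least $n!$ perfect matchings --- all lying in $\mathbf{H}_\mu^c$. Markov's inequality applied to the number of perfect matchings in $\mathbf{H}_\mu^c$, combined with the \emph{naive} per-matching bound $(1-p)^n$ (which, as you note, follows from $1$-independence across the vertex-disjoint edges of a matching), then gives
\[
\mathbb{P}\bigl[|C_1(\mathbf{H}_\mu)|\leq n\bigr]\leq \frac{1}{n!}\cdot\frac{(2n)!}{2^n\,n!}\,(1-p)^n=\binom{2n}{n}\left(\frac{1-p}{2}\right)^n.
\]
The $n!$ perfect matchings exactly cancel the factor $2^{n-1}$ you were worried about losing, so no sharpened separating-set estimate and no exceptional-term bookkeeping is needed. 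Your upper-bound discussion for $p\geq\frac{1}{2}$ matches the paper's; for $p\in[p_{2n},\frac{1}{2})$ the paper invokes a specific `complex limit' of $\mu_{2s,p}$ from \cite[Section 7.1]{DayFalgasRavryHancock20} rather than building a multi-state construction from scratch, though this part of your outline is in the right spirit.
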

\noindent Further, we show that the two-state measure also asymptotically minimises the likely size of a largest component in $1$-independent models on pseudorandom graphs:
\begin{theorem}\label{theorem: component size pseudo-random graph}
	Let $r\in \mathbb{N}$, and let $p\in (\frac{1}{r+1}, \frac{1}{r}]$ be fixed. Let $(H_n)_{n\in \mathbb{N}}$ be a sequence of weakly $q$-pseudorandom graphs on~$n$ vertices with $q=q(n)\gg \log(n)/n$. Then the following hold for $H=H_n$:
	\begin{enumerate}[label = \upshape{(\roman*)}]
		\item  For every $\mu \in \mathcal{M}_{1,p}(H)$, with probability~$1-o(1)$ we have $\vert C_1(\mathbf{H}_{\mu})\vert \geq \left(1-o(1)\right)\frac{1+\sqrt{\frac{(r+1)p-1}{r}}}{r+1}n$.
		\item There exists $\mu \in \mathcal{M}_{1,p}(H)$ such that with probability~$1-o(1)$ the random graph $\mathbf{H}_{\mu}$ satisfies $\vert C_1(\mathbf{H}_{\mu})\vert \leq \left(1+o(1)\right)\frac{1+\sqrt{\frac{(r+1)p-1}{r}}}{r+1}n$.
	\end{enumerate}
\end{theorem}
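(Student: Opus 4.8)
The plan is to prove the two directions separately: (ii) is a short explicit construction, while (i) is the substantive part. For (ii), put $x=\frac{1+\sqrt{((r+1)p-1)/r}}{r+1}$; since $p\in(\frac{1}{r+1},\frac{1}{r}]$ one checks that $x\in(\frac{1}{r+1},\frac{1}{r}]$, so $1-rx\in[0,\frac{1}{r+1})$, and that $x$ is by construction the larger root of $rx^{2}+(1-rx)^{2}=p$. Define $\mu$ to be the $(r+1)$-state generalisation of the two-state measure: assign each vertex of $H_n$ a state in $\{0,1,\dots,r\}$ independently, states $1,\dots,r$ each with probability $x$ and state $0$ with probability $1-rx$, and keep an edge of $H_n$ if and only if its endpoints share a state. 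This is a state-based model, hence a $1$-ipm, and each edge is present with probability $rx^{2}+(1-rx)^{2}=p$, so $\mu\in\mathcal M_{1,p}(H_n)$. Writing $V_i$ for the set of vertices in state $i$, the graph $\mathbf H_\mu$ has no edges between distinct classes, so every component of $\mathbf H_\mu$ lies in a single $V_i$; a Chernoff bound over the $O(1)$ classes gives $|V_i|=(x+o(1))n$ for $1\le i\le r$ and $|V_0|=(1-rx+o(1))n$ with probability $1-o(1)$. Hence $|C_1(\mathbf H_\mu)|\le\max_i|V_i|\le(1+o(1))xn$ whp, which is (ii) (no connectivity of the $H_n[V_i]$ is needed).

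For (i), I would start from the fact that $e(\mathbf H_\mu)$ concentrates: writing $e(\mathbf H_\mu)=\sum_{f\in E(H_n)}X_f$ with $\mathbb P[X_f=1]=p$, the $1$-independence forces $\mathrm{Cov}(X_f,X_{f'})=0$ whenever $f,f'$ are vertex-disjoint, so $\mathrm{Var}(e(\mathbf H_\mu))$ is at most the number of incident pairs of edges of $H_n$, namely $O(\sum_v d_{H_n}(v)^{2})=O(qn^{3})=o((qn^{2})^{2})$; Chebyshev then gives $e(\mathbf H_\mu)=(p+o(1))e(H_n)$ whp. Since all edges of $\mathbf H_\mu$ lie inside components, this yields $\sum_i e(H_n[C_i])\ge(p-o(1))\tfrac{q}{2}n^{2}$ whp; separating the at most $1/\delta$ components of size $\ge\delta n$ from the union $S$ of the smaller ones and applying weak pseudorandomness to each of these $O(1/\delta)$ sets gives $\sum_{|C_i|\ge\delta n}(|C_i|/n)^{2}+(|S|/n)^{2}\ge p-o_\delta(1)$. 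One also wants $|S|=o(n)$: restricting $\mu$ to $S$ is a $1$-ipm on $H_n[S]$ of edge-probability $p$ whose largest component has size $<\delta n$, and feeding this into the bound one is proving forces a component of size $\gtrsim x|S|$ unless $|S|=O(\delta/x)\,n$. Thus (i) reduces to showing that the fractions $\alpha_i=|C_i|/n$ of the linearly-sized components satisfy $\max_i\alpha_i\ge x-o(1)$; but the constraints $\sum_i\alpha_i=1-o(1)$, $\sum_i\alpha_i^{2}\ge p-o(1)$ only force $\max_i\alpha_i\ge p-o(1)$, which is weaker since $x>p$ in the interior of $(\frac{1}{r+1},\frac{1}{r}]$.

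To bridge the gap between $p$ and $x$ one must use $1$-independence more sharply than through the edge count. The cleanest route I see is to reduce to the complete-graph case and adapt the argument behind Proposition~\ref{prop: two-state measure minimises prob of compomnent of size 1/2} and its source \cite[Theorem 16]{DayFalgasRavryHancock20}, which show that among $1$-ipm on $K_m$ of edge-probability $p$ the $(r+1)$-state block measure stochastically minimises $|C_1|$; pseudorandomness should then let one transfer this to $H_n$ (viewed as a complete graph of uniform edge-weight $q$), the input being the sharp $1$-independent bound on the probability that a complete bipartite cut $K_{a,b}$ is empty -- for which a block-type measure, not a single matching, is extremal -- which is exactly what makes the relevant optimisation $\min\{\max_i\alpha_i:\ \sum_i\alpha_i=1,\ \sum_i\alpha_i^{2}=p\}=x$ come out sharp after a union bound taken only over the $O_\delta(1)$-coarse partition of $V(H_n)$ into its linearly-sized components (finitely many sets, once $|S|=o(n)$ is known). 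The main obstacle is precisely this transfer: since $q\to0$, the probability that the cut between two linearly-sized sets is empty is only $(1-p)^{\Theta(qn)}$, not exponentially small, so one cannot afford to union-bound over vertex-subsets and must argue entirely at the level of the bounded-size coarse component structure, combined with the sharp (rather than matching-based) $1$-independence inequality for empty bipartite cuts, in the style of \cite{DayFalgasRavryHancock20}.
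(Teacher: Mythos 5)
Your proof of part (ii) is correct and is exactly the construction the paper uses (the $(r+1)$-state block measure with state $0$ of probability $1-rx$ and states $1,\dots,r$ each of probability $x$; the paper writes it as ``state $r+1$ with probability $\frac{1-\sqrt{r((r+1)p-1)}}{r+1}$, else a uniform state from $[r]$'', which is the same thing).

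For part (i) you set up the argument correctly --- concentration of $e(\mathbf H_\mu)$ plus pseudorandomness plus a merging of components into $O(1)$ parts --- but then abandon it based on an incorrect assessment of what the resulting optimisation gives. You claim that the constraints $\sum_i\alpha_i=1-o(1)$, $\sum_i\alpha_i^2\ge p-o(1)$, $\alpha_i\ge 0$ ``only force $\max_i\alpha_i\ge p-o(1)$''. That is the conclusion of the crude bound $\sum\alpha_i^2\le(\max_i\alpha_i)\sum_i\alpha_i$, but the constraints are actually sharper. If $\max_i\alpha_i=M$ with $M\in(\frac{1}{r+1},\frac{1}{r})$, then
\[
\max\Bigl\{\sum_i\alpha_i^2 \ : \ 0\le\alpha_i\le M,\ \sum_i\alpha_i=1\Bigr\} \ = \ rM^2+(1-rM)^2,
\]
attained at an extreme point of the polytope with $r$ coordinates equal to $M$ and one equal to $1-rM$. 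This function of $M$ is strictly increasing on $(\frac{1}{r+1},\frac{1}{r})$ and equals $p$ exactly at $M=x=\frac{1+\sqrt{((r+1)p-1)/r}}{r+1}$, so the constraint $\sum_i\alpha_i^2\ge p-o(1)$ forces $M\ge x-o(1)$ --- precisely the bound you need. This is the calculation the paper carries out (after first ruling out $|C_1|\le(\frac{1}{r+1}-\eps)n$ by a merging argument, and observing that $|C_1|\ge n/r$ is trivially enough, so that one may assume $\alpha\in(\frac{1}{r+1}-\eps,\frac{1}{r})$ and $\lfloor 1/\alpha\rfloor=r$). Consequently the whole second half of your plan --- reducing to the complete graph, adapting Proposition~\ref{prop: two-state measure minimises prob of compomnent of size 1/2} and~\cite[Theorem 16]{DayFalgasRavryHancock20}, and worrying about empty bipartite cuts at edge density $q$ --- is unnecessary, and the transfer obstacle you identify there is exactly why the paper does not go that way. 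Two smaller remarks: your Chebyshev route to edge concentration is fine (the variance is $O(\sum_v d_v^2)=o((qn^2)^2)$ once $qn\to\infty$, even though your intermediate estimate $O(qn^3)$ should be $O(q^2n^3)$), whereas the paper proves the stronger Chernoff-type Lemma~\ref{lem:edge_concentration} via a path/matching decomposition since it needs it with union bounds elsewhere; and your treatment of the low-order components via a bootstrap on $|S|$ is somewhat circular as stated --- the paper instead merges small components into a bounded number of parts of size at most $\alpha n$ and applies pseudorandomness to those parts directly.
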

\noindent This leads us to the natural conjecture that the two-state measure asymptotically minimises the size of a largest component in $1$-independent models on the hypercube~$Q_n$:
\begin{conjecture}
	\label{conjecture: Qn component evolution}
	Let $p\in (\frac{1}{2},1]$ be fixed, and let $H=Q_n$. Then for all $\mu \in \mathcal{M}_{1,\geq p}(Q_n)$, with probability $1-o(1)$ we have $ \vert C_1\left(\mathbf{H}_{\mu}\right)\vert \geq \left(\frac{1+\sqrt{2p-1}}{2}-o(1)\right)2^n$.
\end{conjecture}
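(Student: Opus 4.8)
By the stochastic-domination observation recorded after the definition of~$p_{1,c}$, it suffices to treat a fixed $\mu\in\mathcal{M}_{1,p}(Q_n)$ with $p\in(\tfrac12,1]$; write $\theta=\theta(p)=(1+\sqrt{2p-1})/2\in(\tfrac12,1]$, the root in $[\tfrac12,1]$ of $\theta^2+(1-\theta)^2=p$. The goal is to produce a component of $\mathbf{H}_{\mu}$ on $(\theta-o(1))2^n$ vertices. That $\theta$ is the right constant is witnessed by the two-state measure $\mu_{2s,p}$: its state-$1$ vertices form an i.i.d.\ $\theta$-retention site percolation on~$Q_n$, and since $\theta$ is a fixed constant, hence well above the (vanishing) site-percolation threshold of~$Q_n$, standard results on supercritical percolation on the hypercube give that, with probability $1-o(1)$, almost every state-$1$ vertex lies in one component, so $\vert C_1(\mathbf{H}_{\mu_{2s,p}})\vert\geq(\theta-o(1))2^n$ --- matching the conjectured bound. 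Note that Theorem~\ref{theorem: component size pseudo-random graph} does \emph{not} apply: on $N=2^n$ vertices, $Q_n$ has edge-density only $q=n/N=\Theta(\log N/N)$, exactly the boundary case excluded by its hypotheses, and --- far more seriously --- $Q_n$ is nowhere near weakly pseudorandom (a subcube spans vastly more edges than a random vertex set of the same size) and has only $\Theta(1/n)$ normalised edge-expansion, so the patching argument behind the pseudorandom case has no chance of going through verbatim.

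The natural first attempt is a renormalisation onto subcubes. Write $Q_n=Q_{n-m}\times Q_m$ for a large fixed~$m$, so that $V(Q_n)$ splits into the $2^{n-m}$ blocks $B_x=\{x\}\times Q_m$ ($x\in\{0,1\}^{n-m}$), each inducing a copy of~$Q_m$, and adjacent blocks $B_x,B_{x'}$ are joined by a perfect matching of $2^m$ edges. Call $B_x$ \emph{good} if the largest component of $\mathbf{H}_{\mu}$ inside~$B_x$ has at least $(\theta-\eta_m)2^m$ vertices, for a sequence $\eta_m\to0$ to be chosen. For adjacent good blocks, the internal big components $L_x\subseteq B_x$ and $L_{x'}\subseteq B_{x'}$ each occupy more than half of their $2^m$ matched positions (as $\theta>\tfrac12$), so at least $(2\theta-1-2\eta_m)2^m$ matched pairs have one endpoint in~$L_x$ and the other in~$L_{x'}$, and a single present edge among these merges $L_x$ and~$L_{x'}$. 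Declaring an edge $xx'$ of the quotient~$Q_{n-m}$ \emph{open} when $B_x,B_{x'}$ are good and some such merging edge is present, one gets a $1$-independent bond percolation on~$Q_{n-m}$ --- since the edge-sets controlling two vertex-disjoint quotient edges live on disjoint vertex sets of~$Q_n$ --- while the good blocks themselves form a family of mutually independent events. If a typical block is good and a typical quotient edge is open, with failure probabilities $\delta_m\to0$, then the supercritical-percolation-on-the-cube analysis applied to~$Q_{n-m}$ puts almost all good blocks into one cluster, whose internal big components together give a component of $\mathbf{H}_{\mu}$ on $(\theta-\eta_m-o(1))2^n$ vertices (the $o(1)$ as $n\to\infty$); letting $m\to\infty$ would then finish. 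The attendant bookkeeping --- ensuring the quotient process is genuinely $1$-independent, that merging edges are never double-used, and that the good and open events are robust enough to glue --- is routine in the spirit of Balister--Bollob\'as--Walters~\cite{BalisterBollobasWalters05}.

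The true obstacle is the local input this scheme demands: a quantitative \emph{finite-scale} form of the conjecture, namely that for every $1$-ipm on~$Q_m$ with edge-probability~$p$ the largest component has at least $(\theta-\eta_m)2^m$ vertices except with probability $\delta_m$, with $\eta_m,\delta_m\to0$ as $m\to\infty$. Here, unlike on~$K_{2n}$ --- where $\mu_{2s,p}$ exactly minimises connection probabilities (\cite[Theorem 16]{DayFalgasRavryHancock20}) --- there is no extremal shortcut, and indeed $\mu_{2s,p}$ is \emph{not} the finite-scale minimiser: already on $Q_2=C_4$, plain Bernoulli$(p)$ bond percolation keeps the largest component to size~$2$ with probability $2(1-p)^2-(1-p)^4$, which exceeds the $6\theta^2(1-\theta)^2$ achieved by~$\mu_{2s,p}$. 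Thus the claimed bound is a purely asymptotic effect, emerging only as the adversary's freedom on bounded configurations is progressively squeezed out by the expansion and tail-event structure of large~$Q_m$; because the extremal measure changes with the scale, one cannot simply induct. Making this quantitative is where essentially all the difficulty lies --- presumably one must combine the marginal-$p$, girth-$4$ $1$-independence constraints across $4$-cycles at each vertex with Harper-type isoperimetry on~$Q_m$ to argue that the ``walls'' separating small components are too improbable, the point being that the weak edge-expansion of~$Q_m$ makes isoperimetry alone hopelessly insufficient and forces the $1$-independence structure to be used in an essential way.
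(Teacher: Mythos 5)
The statement you address is Conjecture~\ref{conjecture: Qn component evolution}, which the paper leaves \emph{open}: it is a conjecture, not a theorem, and the paper offers no proof. The authors only remark that a proof, combined with the renormalisation machinery of Section~\ref{section: proofs of main theorems}, would in turn yield Conjecture~\ref{conjecture: Qn}. There is therefore no paper argument to compare your proposal against.

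To your credit, your write-up is honest about not being a proof. The subcube renormalisation you sketch (decompose $Q_n=Q_{n-m}\times Q_m$; declare a quotient edge open when the large internal components of two adjacent good blocks are merged by a crossing edge of the matching; check that the quotient process is $1$-independent because vertex-disjoint quotient edges depend on vertex-disjoint parts of $Q_n$) is a natural analogue of Lemma~\ref{lemma: renormalisation}, and you correctly reduce everything to a quantitative finite-scale input: that for \emph{every} $\mu\in\mathcal{M}_{1,p}(Q_m)$ the largest component of $\mathbf{H}_{\mu}$ has at least $(\theta-\eta_m)2^m$ vertices except with probability $\delta_m$, where $\eta_m,\delta_m\to 0$. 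Your observations that Theorem~\ref{theorem: component size pseudo-random graph} does not apply --- $Q_n$ sits exactly at the excluded borderline $q=\Theta(\log N/N)$ with $N=2^n$, and is nothing like weakly pseudorandom --- and that $\mu_{2s,p}$ is not the finite-scale minimiser (your $C_4$ computation, $2(1-p)^2-(1-p)^4 > \tfrac{3}{2}(1-p)^2 = 6\theta^2(1-\theta)^2$ for $p>\tfrac12$, is correct, using $2\theta(1-\theta)=1-p$) are both accurate and rule out the two obvious shortcuts. So the gap you name is genuine, and it is the conjecture itself: without the finite-scale bound the renormalisation has nothing to feed on.

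Two smaller cautions on what you wave away as ``routine bookkeeping''. First, your gluing step invokes ``supercritical percolation on the hypercube'' for the \emph{quotient} $1$-independent process on $Q_{n-m}$, which is not a Bernoulli process; known giant-component results for $Q_n$ are for independent percolation, and the Liggett--Schonmann--Stacey comparison does not straightforwardly rescue you here because its domination parameter degrades with maximum degree, which in $Q_{n-m}$ is unbounded as $n\to\infty$. So even granted the finite-scale lemma, the quotient step would need a non-trivial $1$-independent giant-component statement on $Q_{n-m}$ at edge probability $1-\delta_m$, which is itself a weak form of the very conjecture you are trying to prove --- not circular (high edge probability is a strictly easier regime) but not routine either. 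Second, your opening appeal to $\mu_{2s,p}$ as a witness that $\theta$ is the right constant is only an upper-bound witness; the conjecture is a universal lower bound over all $1$-ipms, and nothing in your proposal bears on why no adversarial $\mu$ can do better than $\mu_{2s,p}$ asymptotically --- which is precisely the open content.
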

\noindent We suspect that a proof of this conjecture combined with the ideas in the present paper would yield a proof of Conjecture~\ref{conjecture: Qn}.

Overall, our results would lead us to speculate that the true value of $p_{1,c}(\mathbb{Z}^2)$ is probably a lot closer to the lower bound of~$0.5549$ from~\eqref{eq: lower bound on crit prob from modified site percolation} than to the upper bound of~$0.8639$ obtained from renormalisation arguments in~\cite{BalisterBollobasWalters05}. However a rigorous proof of improved upper bounds on $p_{1,c}(\mathbb{Z}^2)$ remains elusive for the time being.
 
	\subsection{Organisation of the paper}
	The key step in the proof of our main results, Theorem~\ref{theorem: threshold for left meets right}, is proved in Section~\ref{Section: left meets right}; it establishes that $p=4-2\sqrt{3}$ is the threshold for ensuring there is a high probability in any $1$-independent model of finding a path between the largest components in two disjoint copies of~$K_n$ joined by a matching. The argument in a sense captures `what makes the $4-2\sqrt{3}$ measure of~\cite{DayFalgasRavryHancock20, FalgasRavry12} tick'. We then use Theorem~\ref{theorem: threshold for left meets right} in Section~\ref{section: proofs of main theorems} to prove Theorems~\ref{theorem: 1-indep percolation in Z^2 times Kn}--\ref{theorem: pseudo random graphs long paths}. Our component evolution results, Proposition~\ref{prop: two-state measure minimises prob of compomnent of size 1/2} and Theorem~\ref{theorem: component size pseudo-random graph} are proved in Section~\ref{section: component evolution}.
	
	\subsection{Notation}\label{section: notation} 
	Given $n\in \mathbb{N}$ we write~$[n]$ for the discrete interval $\{1,2, \ldots, n\}$. We write~$S^{(2)}$ for the collection of all unordered pairs from a set~$S$. We use standard graph-theoretic notation throughout the paper. Given a graph~$H$, we use $V=V(H)$ and $E=E(H)$ to refer to its vertex-set and edge-set respectively, and write~$e(H)$ for the size of~$E(H)$. Given $X\subseteq V$, we write~$H[X]$ for the subgraph of~$H$ induced by~$X$, i.e.\ the graph $(X, E(H)\cap X^{(2)})$. For disjoint subsets~$X,Y$ of~$V$ we also write~$H[X,Y]$ for the bipartite subgraph of~$H$ induced by $X\sqcup Y$, that is the graph $(X\cup Y, \{xy\in E(H): \ x\in X, y\in Y\})$. We denote by~$K_n$ the complete graph on~$n$ vertices, $K_n=([n], [n]^{(2)})$.

	The \emph{Cartesian product} of two graphs~$G_1$ and~$G_2$ is the graph $G_1\times G_2$ with $V(G_1\times G_2)=\{(v_1, v_2): \ v_1\in V(G_1), v_2\in V(G_2) \}$ and $E(G_1\times G_2)$ consisting of all pairs $\{(u_1,u_2), (v_1,v_2)\}$ with either $u_1=v_1\in V(G_1)$ and $u_2v_2\in E(G_2)$ or $u_1v_1\in E(G_1)$ and $u_2=v_2\in V(G_2)$. In particular if $G_1=K_2$, i.e.\ a single edge, then $G_1\times G_2$ is the \emph{bunkbed graph} of~$G_2$ consisting of two disjoint copies of~$G_2$, the \emph{left copy} $\{1\}\times G_2$ and the \emph{right copy} $\{2\}\times G_2$, together with a perfect matching joining each vertex~$(1,v)$ in the left copy to its image~$(2,v)$ in the right copy. See Figure~\ref{fig:cartesian_product} for an example.
		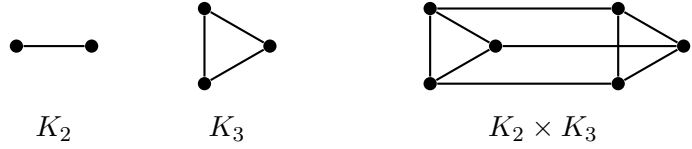
\begin{figure}
			\centering
			\begin{tikzpicture}
			\foreach \pos/ \name in {{(0,0)/a},{(0,1)/b}, {(0.87,0.5)/c}, {(-2.5,0.5)/d}, {(-1.5,0.5)/e},{(0+\vara,0)/aa},{(0+\vara,1)/bb}, {(0.87+\vara,0.5)/cc},{(0+\varb,0)/aaa},{(0+\varb,1)/bbb}, {(0.87+\varb,0.5)/ccc}} 
			\node[vertex] (\name) at \pos {};
			
			\foreach \source/ \dest in {a/b,a/c,b/c,aa/bb,aa/cc,bb/cc,aaa/bbb,aaa/ccc,bbb/ccc,aa/aaa,bb/bbb,cc/ccc,d/e} \path[edge] (\source) -- (\dest);
			
			\node at (-2,-0.6) {$K_2$};
			\node at (0.3,-0.6) {$K_3$};
			\node at (4.5,-0.6) {$K_2 \times K_3$};
			
			\end{tikzpicture}
			\caption{The Cartesian product $K_2 \times K_3$}
			\label{fig:cartesian_product}
		\end{figure}
	
	Finally we use the standard Landau notation for asymptotic behaviour: given functions $f,g: \ \mathbb{N}\rightarrow \mathbb{R}$, we write $f=O(g)$ if $\vert f(n)\vert \leq C\vert g(n)\vert$ for some $C>0$ and all~$n$ sufficiently large, and $f=o(g)$ if $\lim_{n\rightarrow \infty}\vert f(n)/g(n)\vert=0$. We use $f=\Omega(g)$ and $f=\omega(g)$ to denote $g=O(f)$ and $g=o(f)$, respectively. We also sometimes use $f\ll g$ and $f\gg g$ as a shorthand for $f=o(g)$ and $f=\omega(g)$, respectively. Given a sequence of events $\left(E_n\right)_{n\in \mathbb{N}}$ in some probability space, we say that~$E_n$ occurs \emph{with high probability (whp)} if $\mathbb{P}[E_n]=1-o(1)$.

\section{When left meets right: joining the largest components on either side of $K_2\times G_n$}\label{Section: left meets right}	
Let $(G_n)_{n \in \mathbb{N}}$ be a sequence of weakly $q$-pseudorandom $n$-vertex graphs where $qn \gg \log n$.
Consider the Cartesian product $H=K_2\times G_n$. Given $\mu \in \mathcal{M}_{1,p}(H)$, let `$\mathrm{Left\  meets\  Right}$' denote the event that the $\mu$-random graph $\mathbf{H}_{\mu}$ contains a connected component containing both strictly more than half of the vertices in $\{1\}\times [n]$ and strictly more than half of the vertices in $\{2\}\times [n]$. Our main result in this section is showing that the event `$\mathrm{Left\  meets\  Right}$' undergoes a sharp transition at $p=4-2\sqrt{3}$, in the sense that for $p\leq 4-2\sqrt{3}$ it is possible to construct $1$-independent measures $\mu \in \mathcal{M}_{1,p}(H)$ such that whp the event `$\mathrm{Left\  meets\  Right}$' does not occur, while for $p>4-2\sqrt{3}$ it occurs whp regardless of the choice of~$\mu$.	
\begin{theorem}\label{theorem: threshold for left meets right}
\begin{enumerate}[label = \upshape{(\roman*)}]
	\item Let $p>4-2\sqrt{3}$ be fixed. Then for every $\mu \in \mathcal{M}_{1,p}(H)$,
	\[\Pr\left[\mathrm{Left\  meets\  Right}\right]=1-o(1).\] \label{LMR_i}
	\item Let $\frac{1}{2}<p\leq 4-2\sqrt{3}$ be fixed. Then there exists $\mu \in \mathcal{M}_{1,\geq p}(H)$ such that 
		\[\Pr\left[\mathrm{Left\  meets\  Right}\right]=o(1).\] \label{LMR_ii}
\end{enumerate}	
\end{theorem}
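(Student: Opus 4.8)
\textbf{Proof proposal for Theorem~\ref{theorem: threshold for left meets right}.}

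\medskip

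\emph{Part (ii): the construction.} The plan is to mimic the $4-2\sqrt{3}$ measure from~\cite{DayFalgasRavryHancock20, FalgasRavry12}, but adapted to the bunkbed graph $H = K_2 \times G_n$. Write $\lambda = 4-2\sqrt{3}$, so $\sqrt{1-\lambda} = \sqrt{2\sqrt{3}-3}$ and one checks $2\sqrt{1-\lambda} - (1-\lambda) = \lambda$, i.e.\ $1-\lambda = (1-\sqrt{1-\lambda})^2 \cdot \frac{1}{?}$; the relevant identity is that if we set $\alpha$ so that $(1-\alpha)^2 = 1-p$ then for $p = \lambda$ we have $\alpha = 1 - \sqrt{1-\lambda}$ and $\alpha^2 + 2\alpha(1-\alpha) \leq \tfrac12$ fails to hold but $\alpha \leq \tfrac12$ does --- the point being to choose vertex states so that the "majority state" on each side is $0$ with probability bounded away from~$\tfrac12$. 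Concretely, I would assign to each vertex of the left copy $\{1\}\times [n]$ an independent state in $\{0,1\}$, and likewise to the right copy, with $\Pr[\text{state }1] = \alpha < \tfrac12$, and then open a horizontal edge $uv$ (within a copy) iff both endpoints have state~$1$, and open a rung $(1,v)(2,v)$ with a conditional probability chosen to make each rung present with probability exactly~$p$ while keeping $1$-independence and, crucially, keeping the rungs from "rescuing" a minority component. Since $\alpha < \tfrac12$, a Chernoff bound shows that whp strictly fewer than half the vertices in each copy have state~$1$; hence the state-$1$ subgraph on each side spans $<n/2$ vertices, so no component can contain a majority of either side \emph{through horizontal edges alone}. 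The subtle point is that rungs could join a state-$1$ component on the left to a state-$1$ component on the right, or link up state-$0$ vertices; so the rung rule must be designed (as in the cited constructions) so that a rung is open only when at least one endpoint is in state~$0$ in a controlled way, ensuring the union of all open edges still has every component meeting $<n/2$ vertices on at least one side. I would verify $1$-independence by exhibiting the measure as a state-based (two-block-factor-type) model where disjoint vertex sets carry independent randomness, verify the edge probabilities equal $p$ by a short computation, and verify the "no large component" property deterministically from the state profile using the Chernoff estimate.

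\medskip

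\emph{Part (i): the lower bound $p > 4-2\sqrt{3}$ forces Left-meets-Right.} Fix $\mu \in \mathcal{M}_{1,p}(H)$ with $p = \lambda + \delta$. Restricting $\mu$ to the left copy gives a $1$-ipm on $G_n$ with edge-probability $p$; the first task is to understand the component structure of $\mathbf{G}_\mu$ on a single pseudorandom copy. Here I would use a sprinkling/density argument: in any $1$-ipm on a weakly $q$-pseudorandom graph with $qn \gg \log n$ and edge-probability $p > \tfrac12$, whp there is a "giant" component covering $>n/2$ of the vertices --- intuitively because a $1$-ipm with marginal $>\tfrac12$ cannot partition the vertices into two halves that are internally well-connected and mutually disconnected (two sets $A, B$ with $|A|, |B| \geq n/2$ that are separated would need all $\sim q|A||B|$ edges between them closed, but $1$-independence only buys independence across distance-$\geq 1$ sets and each individual edge is open with probability $p$; a second-moment or entropy-type argument, or an appeal to an auxiliary $1$-independent process on an expander, should show this is whp impossible). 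This is the step I expect to be the main obstacle: quantifying, for a general (not product) $1$-independent measure, that marginal density $>\tfrac12$ on a pseudorandom host yields a component of size $>n/2$ whp, with enough uniformity to survive the later union bound. Granting it, whp both the left copy and the right copy individually contain a component $C_L$, $C_R$ with $|C_L| > n/2$, $|C_R| > n/2$.

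\medskip

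The final step is to join $C_L$ to $C_R$ through the rungs. Let $D_L \subseteq [n]$ be the "coordinate set" of $C_L$ (the $v$ with $(1,v) \in C_L$) and similarly $D_R$; then $|D_L| + |D_R| > n$, so $D := D_L \cap D_R$ has $|D| > 0$ --- in fact, running the single-copy argument with a little room to spare, $|D_L|, |D_R| \geq (\tfrac12 + \eta)n$ for some $\eta = \eta(\delta) > 0$, hence $|D| \geq 2\eta n = \Omega(n)$. The rungs $\{(1,v)(2,v) : v \in D\}$ are pairwise at graph distance $\geq 1$ (distinct $v$'s), so by $1$-independence their states are mutually independent, each open with probability $p > \tfrac12$; thus whp at least one rung in $D$ is open, and any such rung joins $C_L$ to $C_R$ into a single component witnessing Left-meets-Right. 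One must be mildly careful that the event "$v \in D_L$" is measurable with respect to the left-copy edges and "$v \in D_R$" with respect to the right-copy edges, while the rung at $v$ touches both --- so I would condition on the left and right edge-configurations first, on the $(1-o(1))$-probability event that $|D| = \Omega(n)$, and only then use independence of the rungs over $D$ (which holds conditionally, since the rungs are at distance $\geq 1$ from all horizontal edges except the two incident ones; a short argument handles the incident edges, e.g.\ by first exposing horizontal edges, noting $D$ depends only on those, then exposing a sub-family of rungs at distance $\geq 2$ from a fixed spanning structure of $C_L \cup C_R$, or simply by enlarging the room $\eta$ so that a positive fraction of $v \in D$ have their rung at distance $\geq 2$ from chosen connecting paths). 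This conditioning step, together with the main obstacle above, constitutes the bulk of the work; the identity pinning down the constant $4-2\sqrt{3}$ enters only in Part~(ii), through the choice of $\alpha$.
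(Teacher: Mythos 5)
There are substantive gaps in both parts, and the one in Part~(i) is fatal to the whole approach.

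For Part~(i), your argument as written would establish the conclusion for every $p > \tfrac12$, not just $p > 4-2\sqrt{3}$: nothing in the chain ``find giants on both sides, intersect their coordinate sets, win a rung over $D$'' ever uses $p > 4-2\sqrt{3}$. But that contradicts Part~(ii), which exhibits $1$-ipms with $\tfrac12 < p \leq 4-2\sqrt{3}$ on which Left-meets-Right fails whp, so the argument cannot be correct. The specific flaw is the step where you condition on the two horizontal edge-configurations (equivalently, on $D = D_L \cap D_R$) and then treat the rungs over $D$ as independent with marginal $p$. $1$-independence gives you nothing here: every rung at $v$ shares a vertex with every left edge at $(1,v)$ and every right edge at $(2,v)$, so the conditional law of the rungs given the two horizontal configurations is completely unconstrained. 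The paper's own lower-bound measure $\mu_F$ is the canonical counterexample: there $D_L$ is (up to negligible error) the set of left state-$1$ coordinates, $D_R$ the set of right state-$0$ coordinates, and by edge rule~(iii) the rung at every $v \in D_L \cap D_R$ is deterministically \emph{closed} --- the conditional rung probability over $D$ is $0$, not $p$. Your proposed patches (exposing only a spanning structure, or taking rungs at distance $\geq 2$ from it) do not help, because $C_L$ and hence $D_L$ genuinely depend on all left horizontal edges, and every rung is at graph distance $0$ from some of them. The paper's actual proof of Part~(i) is substantially more involved and does use the constant $4-2\sqrt{3}$ in an essential way: assuming Left-meets-Right fails, a three-colouring of $V(H)$ induces a $9$-partition of $[n]$ whose normalised part sizes must approximately satisfy the system of quadratic inequalities $\mathcal{S}(p_\star)$ given by \cref{eq: positive variables summing to 1}--\cref{eq: density in the rows}, and Lemma~\ref{lemma: system has no solution} shows $\mathcal{S}(p) = \varnothing$ precisely for $p > 4-2\sqrt{3}$. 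Your closing remark that the constant ``enters only in Part~(ii)'' is therefore wrong; it enters in both parts, and the harder occurrence is in Part~(i).

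For Part~(ii), the construction you sketch does not achieve edge marginal $\geq p$. Opening a horizontal edge iff both endpoints have state $1$, with $\Pr[\text{state }1]=\alpha<\tfrac12$, gives each horizontal edge probability $\alpha^2 < \tfrac14$, far below the required $p > \tfrac12$. The paper's construction is asymmetric: left-copy vertices get states in $\{0,1\}$ with $\Pr[1]=\theta=(1+\sqrt{2p-1})/2 > \tfrac12$, and a left edge is open iff the two states \emph{agree}, giving probability $\theta^2+(1-\theta)^2 = p$; right-copy vertices get states in $\{0,\star\}$ with $\Pr[0]=\sqrt{p}$, and a right edge is open iff both are $0$, giving probability $p$. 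The rung at $v$ is then closed iff $S_{(1,v)}=1$ and $S_{(2,v)}=0$, probability $\theta\sqrt{p}$, and the identity \cref{eq: 4-2sqrt 3} shows $\theta\sqrt{p}\leq 1-p$ exactly when $p\leq 4-2\sqrt{3}$. Since state-$\star$ vertices have degree one in $\mathbf{H}_{\mu_F}$, no component can contain both a left state-$1$ vertex and a right state-$0$ vertex, while each of those sets has size $>n/2$ whp, which kills Left-meets-Right. You had the right high-level idea (a state-based model obstructing a common giant), but the symmetric $\{0,1\}$-states-with-small-$\alpha$ variant you propose does not have the required marginals, and the heuristic computation you offer to pin down $\alpha$ does not cohere.
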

\noindent For $p\in (\frac{1}{2},1]$, let $\theta=\theta(p)$ be given by 
\begin{align*}
\theta(p) \coloneqq \frac{1+\sqrt{2p-1}}{2}.
\end{align*}
The quantity~$\theta$ will play an important role in the proof of both parts of Theorem~\ref{theorem: threshold for left meets right}. Observe that $\theta \in [p,1]$ and satisfies 
\begin{align*}
\theta^2+(1-\theta)^2=p \qquad \textrm{ and }\qquad  2\theta(1-\theta)=1-p.\end{align*}
Using the latter of these relations, we see that for $p\in [0,1]$,
\begin{align}
\theta\sqrt{p} \leq 1-p=2\theta(1-\theta)&&&\Leftrightarrow\  p\leq 4(1-\theta)^2= 2p-2\sqrt{2p-1} 
&&\Leftrightarrow\  8p-4\leq p^2 \notag\\ &&&&&\Leftrightarrow\  p\leq 4-2\sqrt{3}.\label{eq: 4-2sqrt 3}\end{align}
\noindent Our proofs will also make extensive use of the following Chernoff bound: given a binomial random variable $X\sim \mathrm{Binom}(N, p)$ and $\varepsilon\in (0,1)$, we have
\begin{align}\label{eq: Chernoff bound}
\mathbb{P}\left[\vert X-Np\vert \geq \varepsilon Np\right]\leq 2 e^{-\frac{\varepsilon^2Np}{3}}.
\end{align}

\subsection{Lower bound construction: proof of Theorem~\ref{theorem: threshold for left meets right}\cref{LMR_ii}}
For each $1/2<p\leq 4-2\sqrt{3}$, we construct a state-based measure $\mu_{F} \in \mathcal{M}_{\geq p}(K_2\times G_n)$, based on the ideas behind constructions in~\cite{DayFalgasRavryHancock20,FalgasRavry12}. Assume without loss of generality that $V(G_n)=[n]$. We randomly assign to each vertex $(i,v)\in [2]\times [n]$ a state~$S_v \in \{0,1,\star\}$, independently of all the other vertices, with 
\begin{enumerate}[label = \upshape{(\alph*)}]
	\item $S_{(1,v)}=1$ with probability~$\theta$ and $S_{(1,v)}=0$ otherwise;
	\item $S_{(2,v)}=0$ with probability~$\sqrt{p}$ and $S_{(2,v)}=\star$ otherwise.
\end{enumerate}
We then include edges of $H=K_2\times G_n$ in our random subgraph $\mathbf{H}_{\mu_F}$ according to the following rules:
\begin{enumerate}[label = \upshape{(\roman*)}]
	\item an edge $\{(1,u), (1,v)\}$ is included if $S_{(1,u)}=S_{(1,v)}$;
	\item an edge $\{(2,u), (2,v)\}$ is included if $S_{(2,u)}=S_{(2,v)}=0$;
	\item an edge $\{(1,v),(2,v)\}$ is included if $S_{(2,v)}=\star$ or if $S_{(1,v)}=S_{(2,v)}=0$.
\end{enumerate}
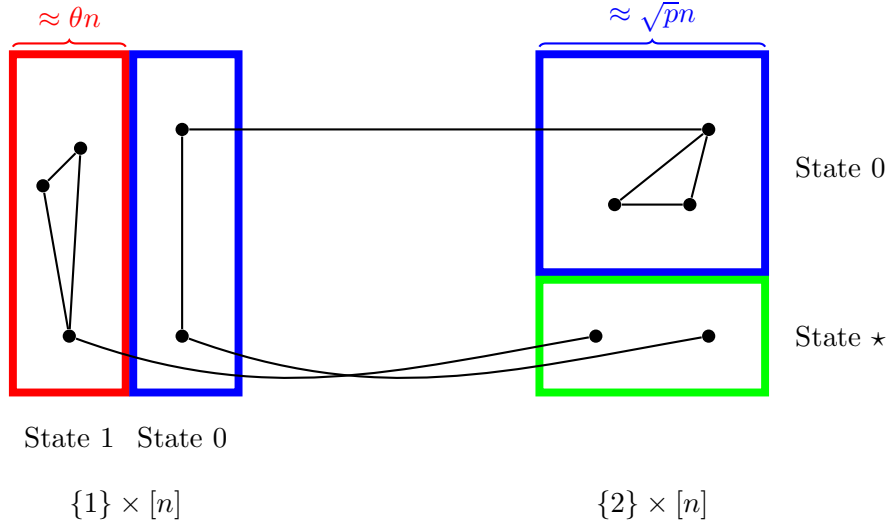
\begin{figure}
	\centering
	\begin{tikzpicture}
	\draw[line width = 3pt, red] (0,0) rectangle (1.5,4.5);
	\draw[line width = 3pt, blue] (1.6,0) rectangle (3,4.5);
	
	\draw[line width = 3pt, blue] (7,1.6) rectangle (10,4.5);
	\draw[line width = 3pt, green] (7,0) rectangle (10,1.5);
	
	\foreach \pos/ \name in {{(0.75,0.75)/a},{(0.4,2.75)/aa},{(0.9, 3.25)/aaa},{(2.25, 0.75)/b},{(2.25, 3.5)/bb},{(9.25, 3.5)/c},{(8, 2.5)/cc},{(9, 2.5)/ccc},{(7.75, 0.75)/d},{(9.25, 0.75)/dd}} 
	\node[vertex] (\name) at \pos {};
	
	\foreach \source/ \dest in {a/aa,aa/aaa,a/aaa,b/bb,c/cc,cc/ccc,c/ccc,bb/c} \path[edge] (\source) -- (\dest);
	
	\path[edge] (a) to[out=-20, in=190] (d);
	\path[edge] (b) to[out=-20, in=190] (dd);
	
	\node at (0.75,-0.6) {State $1$};
	\node at (2.25,-0.6) {State $0$};
	\node at (11,3) {State $0$};
	\node at (11,0.75) {State $\star$};
	
	\node at (1.5,-1.5) {$\{1\} \times [n]$};
	\node at (8.5,-1.5) {$\{2\} \times [n]$};
	
	\draw[decorate, decoration = {brace}, thick,red] (0,4.6) -- (1.5,4.6);
	\node[red] at (0.75,5) {$\approx \theta n$};
	\draw[decorate, decoration = {brace}, thick, blue] (7,4.6) -- (10,4.6);
	\node[blue] at (8.5,5) {$\approx \sqrt{p} n$};
	
	\end{tikzpicture}
	\caption{The lower bound construction}
	\label{fig:lower_bound_construction}
\end{figure}
See \cref{fig:lower_bound_construction} for an illustration of the construction. Since~$\mu_F$ is state-based, it is clearly a $1$-ipm. Our state distributions (a)--(b) imply that every edge in the left copy of~$G_n$ is open (included in our random graph) with probability $\theta^2+(1-\theta)^2=p$ (by the edge-rule (i) above), and that every edge in the right copy of~$G_n$ is open with probability $(\sqrt{p})^2=p$ (by the edge-rule (ii) above). On the other hand, (by the edge-rule (iii) above) an edge $\{(1,v),(2,v)\}$  from the left copy to the right copy is closed if and only if $S_{(1,v)}=1$ and $S_{(2,v)}=0$, which by~\eqref{eq: 4-2sqrt 3} occurs with probability $\theta\sqrt{p}\leq 1-p$ provided $p\leq 4-2\sqrt{3}$. Thus $\mu_{F} \in \mathcal{M}_{1, \geq p}(K_2\times G_n)$ as claimed.

All that remains to show is that  for this measure the event `$\mathrm{Left\  meets\  Right}$' occurs with probability~$o(1)$ in the random graph $\mathbf{H}_{\mu_F}$. 
Observe that the construction of~$\mu_F$ ensures there is no path in $\mathbf{H}_{\mu_F}$ from the vertices in $\{1\}\times [n]$ in state~$1$ to the vertices in $\{2\}\times [n]$ in state~$0$. Indeed the only edges of $\mathbf{H}_{\mu_F}$ in which the endpoints are in different states are those edges containing a vertex~$(2,v)$ in state $S_{(2,v)}=\star$. Since by construction vertices in state~$\star$ have degree exactly one in $\mathbf{H}_{\mu_F}$, it follows that there is no component of~$\mathbf{H}_{\mu_F}$ containing both vertices in state~$1$ and vertices in state~$0$.

Since the expected number of vertices in $\{1\}\times [n]$ in state~$1$ is $\theta n>pn$ and the expected number of vertices in $\{2\}\times [n]$ in state~$0$ is $\sqrt{p}n>pn$, and since states are assigned independently, it follows from~\eqref{eq: Chernoff bound} that for all fixed~$p$ with $1/2<p\leq 4-2\sqrt{3}$, with probability~$1-o(1)$ there is no connected component in~$\mathbf{H}_{\mu_F}$ containing at least half of the vertices of both $\{1\}\times [n]$ and $\{2\}\times [n]$. Thus `$\mathrm{Left\ meets\ Right}$' occurs with probability~$o(1)$ for $\mathbf{H}_{\mu_F}$, as claimed.\qed

\subsection{Upper bound: proof of Theorem~\ref{theorem: threshold for left meets right}\cref{LMR_i}}
Suppose $p>4-2\sqrt{3}$ is fixed. We shall show that for~$n$ sufficiently large this implies that for any $\mu\in \mathcal{M}_{1,p}(H)$, whp `$\mathrm{Left\ meets\ Right}$' occurs.  Our strategy for doing this is as follows: first of all we show in Lemma~\ref{lemma: tripartition probabilistic part} that, for each $i \in [2]$, in any fixed tripartition $\sqcup_{j=1}^3V_j$ of $\{i\}\times [n]$, whp each of the parts~$V_j$ contains roughly the expected number of edges of $\mathbf{H}_{\mu}$, i.e.\ $\left(p+o(1)\right)e(H[V_j])$. This immediately implies that whp there is a component~$C_L$ of $\mathbf{H}_{\mu}$ containing strictly more than half of the vertices of $\{1\}\times [n]$, and another component~$C_R$ containing at least half of the vertices of $\{2\}\times [n]$.

If these two components~$C_L$ and~$C_R$ are not the same, then we colour vertices of $[2]\times [n]$ Green if they lie in a small component of $\mathbf{H}_{\mu}[\{i\}\times [n]]$ for some $i\in [2]$,  and otherwise Red if they are part of~$C_L$ and Blue if not (so in particular vertices in~$C_R$ are coloured Blue). This gives rise to a partition of~$[n]$ into~$9$ sets~$V_{c,c'}$, corresponding to the possible ordered colour pairs assigned to the vertex pairs $((1,v), (2,v))$, $v\in [n]$. Since whp at least~$(p-o(1))n$  of the~$n$ edges from $\{1\}\times [n]$ to $\{2\}\times [n]$ are present in $\mathbf{H}_{\mu}$, we can combine the probabilistic information from Lemma~\ref{lemma: tripartition probabilistic part} to show that whp the relative sizes of the ~$V_{c,c'}$ almost satisfy a certain system $\mathcal{S}=\mathcal{S}(p)$ of inequalities \eqref{eq: positive variables summing to 1}--\eqref{eq: density in the rows} (or more precisely that we can extract from the $\vert V_{c,c'}\vert /n$ a solution to~$\mathcal{S}(p_{\star})$ for some~$p_{\star}$ a little smaller than~$p$). For $p>4-2\sqrt{3}$ and~$n$ sufficiently large, we are able to show this leads to a contradiction (Lemma~\ref{lemma: system has no solution}).
Having outlined our proof strategy, we now fill in the details. We shall use the following path-decomposition theorem due to Dean and Kouider.
\begin{theorem}[Dean and Kouider \cite{DeanKouider00}] \label{thm:path_decomp}
	Let~$G$ be an $n$-vertex graph. Then there exists a set $\mathcal{P}$ of edge-disjoint paths in~$G$ such that $\s{\mathcal{P}} \leq \frac{2n}{3}$ and $\bigcup_{P \in \mathcal{P}} E(P) = E(G)$.
\end{theorem}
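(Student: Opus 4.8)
The natural starting point is a classical theorem of Lov\'asz: every graph on $n$ vertices decomposes into at most $\lfloor n/2\rfloor$ edge-disjoint paths and cycles. This follows by making $G$ Eulerian --- adjoin a new vertex $z$ joined to every vertex of odd degree, so that every original vertex now has even degree and $z$ has even degree --- decomposing the resulting even graph into edge-disjoint cycles, and noting that deleting $z$ breaks exactly the cycles through $z$ into paths while leaving the remaining cycles intact; with a little care the number of parts is at most $\lfloor n/2\rfloor$. Since $\lfloor n/2\rfloor\leq \frac{2n}{3}$, this already settles the theorem whenever the Lov\'asz decomposition uses no cycles, so the entire difficulty lies in absorbing the cycles.

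The naive remedy --- split each cycle into two paths --- fails badly: a Lov\'asz decomposition may consist almost entirely of triangles (for instance when $G$ is a ``friendship graph'' built from $\Theta(n)$ triangles glued at a common vertex), and then splitting produces $\Theta(n)$ paths, far above $\frac{2n}{3}$. The right way to think about it is that a cycle of length $\ell\geq 3$ ought to cost just $2\leq \tfrac{2}{3}\ell$ paths, i.e.\ at most $\tfrac{2}{3}$ of a path per vertex it occupies --- which is exactly the global budget --- so one needs a decomposition that genuinely realises this rate and never squanders cheap structure (single edges, short paths, triangles) against an expensive part of the budget.

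My plan is to work with a decomposition $\mathcal{D}$ of $E(G)$ into paths and cycles that minimises $w(\mathcal{D}) := (\text{number of paths of }\mathcal{D}) + 2\,(\text{number of cycles of }\mathcal{D})$ --- the number of paths one gets after splitting every cycle of $\mathcal{D}$ into two paths --- and, subject to this, is extremal for a secondary parameter (such as the number of edges lying on cycles, or the number of vertices that are endpoints of two or more paths of $\mathcal{D}$). For a $w$-optimal $\mathcal{D}$ one then rules out, via local exchange moves, the configurations that would be costly: two paths of $\mathcal{D}$ meeting only at a common endpoint can be concatenated into one path, lowering $w$; a cycle of $\mathcal{D}$ incident to a path of $\mathcal{D}$ can in favourable cases be rerouted into that path, trading a path and a cycle for two paths and lowering $w$; a triangle adjacent to suitable edges can be broken up more cheaply; and so on. Running a discharging argument on $\mathcal{D}$ --- assign charge $\tfrac{1}{2}$ to every (path, endpoint) incidence and $2/\ell$ to every vertex of every $\ell$-cycle, so that the total charge equals $w(\mathcal{D})$, and then move charge off the overloaded vertices using the absence of the forbidden configurations --- should yield $w(\mathcal{D})\leq\frac{2n}{3}$; splitting the cycles of $\mathcal{D}$ then gives the required path decomposition. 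An essentially equivalent route is a direct induction on $n$, applied componentwise: delete an isolated vertex; route two leaves at a common neighbour through a single path of length $2$ and delete them; suppress a degree-$2$ vertex and recover the corresponding path of length $2$ from a decomposition of the smaller graph; peel off a short cycle through a low-degree vertex; and so on.

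The main obstacle --- and, I believe, the real content of the Dean--Kouider proof (and of Yan's independent argument) --- is the case analysis making the exchange moves (equivalently, the reduction rules) go through: nearly every local modification is legitimate only when the paths and cycles involved share no further vertices, and dealing with the ``tangled'' configurations --- a path whose endpoint lies on a cycle that also meets a second path, two short cycles sharing two vertices, a cluster of leaves with pairwise distinct neighbours --- is exactly where the constant $\tfrac{2}{3}$ gets pinned down. I expect the bulk of the work to be the bookkeeping showing that, after all reductions, the residual graph really does spend at most $\tfrac{2}{3}$ of a path per vertex, and in particular that low-degree vertices and triangles are always consumed on budget.
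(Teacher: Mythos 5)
The paper does not prove this statement at all: it is quoted verbatim from Dean and Kouider \cite{DeanKouider00} and used as a black box, so the only ``proof'' in the paper is the citation. Your write-up should therefore be judged as a free-standing proof attempt, and as such it has a genuine gap: it is a plan, not a proof. You correctly recall Lov\'asz's path--cycle decomposition, correctly observe that splitting every cycle into two paths can blow the budget, and propose a reasonable framework (a decomposition minimising $w(\mathcal{D})=\#\text{paths}+2\,\#\text{cycles}$, local exchange moves, discharging at rate $\tfrac{2}{3}$ per vertex). But the entire content of the theorem lives in the step you explicitly defer: showing that after the exchange moves every vertex carries charge at most $\tfrac{2}{3}$. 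As stated, the discharging does not close. A vertex that is an endpoint of two paths of $\mathcal{D}$ already carries charge $1$, and your only tool for removing such a vertex --- concatenating the two paths --- is unavailable precisely when the two paths meet again elsewhere, which is the generic ``tangled'' case you name but do not resolve. Similarly, a vertex lying on a triangle of $\mathcal{D}$ and serving as an endpoint of a path carries charge $\tfrac{2}{3}+\tfrac{1}{2}$, and no receiving vertex for the excess is identified. Since you yourself flag these configurations as ``the real content of the Dean--Kouider proof'' and leave them to ``bookkeeping'' you ``expect'' to work, the argument cannot be accepted as a proof of the $\tfrac{2n}{3}$ bound.

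To be clear about severity: nothing you wrote is wrong, and the skeleton is consistent with how results of this type (Dean--Kouider, and Yan's independent argument) are actually proved. But the constant $\tfrac{2}{3}$ is exactly what the omitted case analysis pins down --- with the material you have supplied one cannot rule out a decomposition costing, say, $n$ paths on a friendship-type example, which is the very failure mode you describe. If you want a self-contained proof you must either carry out the exchange/discharging analysis in full, or do what the paper does and cite \cite{DeanKouider00}.
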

\noindent Recall that a \emph{matching} in a graph is a set of vertex-disjoint edges.
\begin{corollary} \label{cor:M_decomp}
	Let $\eps > 0$ and let~$G$ be an $n$-vertex graph with $e(G)\geq 2n/\eps$. Then there exists a set $\mathcal{M}$ of edge-disjoint matchings in~$G$ such that 
	\begin{enumerate}[label = \upshape{(M\arabic*)}, leftmargin= \widthof{M10000}]
		\item $\s{\mathcal{M}} \leq 2n$, \label{M1}
		\item $\s{E(G) \setminus \bigcup_{M \in \mathcal{M}}M} \leq 2\eps e(G)$, and \label{M2}
		\item $\s{M} \geq \frac{\eps e(G)}{2n}$ for every $M \in \mathcal{M}$. \label{M3}
	\end{enumerate}
\end{corollary}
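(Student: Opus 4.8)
The plan is to derive the corollary directly from the Dean--Kouider path-decomposition theorem (\cref{thm:path_decomp}) via the elementary observation that every path decomposes into two matchings. First I would apply \cref{thm:path_decomp} to obtain a family $\mathcal{P}$ of edge-disjoint paths with $\bigcup_{P\in\mathcal{P}}E(P)=E(G)$ and $\s{\mathcal{P}}\leq\tfrac{2n}{3}$. For each path $P=v_0v_1\cdots v_\ell\in\mathcal{P}$, two-colour its edges alternately along the path, so that $E(P)$ splits into two matchings: $M_P^{(1)}$ consisting of the odd-indexed edges and $M_P^{(2)}$ consisting of the even-indexed edges. Collecting these over all $P\in\mathcal{P}$ and discarding the empty ones yields a family $\mathcal{M}_0$ of at most $2\s{\mathcal{P}}\leq\tfrac{4n}{3}$ edge-disjoint matchings (edge-disjoint because the $P$'s are, and the two matchings from a single $P$ use disjoint edges), whose union is still all of $E(G)$.

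The family $\mathcal{M}_0$ already covers every edge of $G$ exactly once and has size at most $\tfrac{4n}{3}\leq 2n$, but it may contain small matchings, which would violate (M3). To fix this I would simply discard every matching of size strictly less than $\tfrac{\eps e(G)}{2n}$, and let $\mathcal{M}\subseteq\mathcal{M}_0$ be the family of surviving matchings. By construction $\mathcal{M}$ satisfies (M3), and $\s{\mathcal{M}}\leq\s{\mathcal{M}_0}\leq 2n$, giving (M1). This is the point where the hypothesis $e(G)\geq 2n/\eps$ enters: it guarantees $\tfrac{\eps e(G)}{2n}\geq 1$, so that the lower bound in (M3) is a nontrivial (and integral-at-least-$1$) threshold.

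For (M2), observe that since the matchings of $\mathcal{M}_0$ partition $E(G)$, the edges of $G$ not covered by $\mathcal{M}$ are exactly those lying in the discarded matchings. Each discarded matching has fewer than $\tfrac{\eps e(G)}{2n}$ edges, and there are at most $\s{\mathcal{M}_0}\leq\tfrac{4n}{3}$ of them, so the number of uncovered edges is at most $\tfrac{4n}{3}\cdot\tfrac{\eps e(G)}{2n}=\tfrac{2\eps e(G)}{3}\leq 2\eps e(G)$, which is (M2).

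I do not expect a genuine obstacle here: the proof is a routine combination of \cref{thm:path_decomp} with alternating edge-colourings of paths. The only step needing a little care is the bookkeeping in the last paragraph — confirming that the discarded matchings, of which there are $O(n)$ each of size $O\!\left(\eps e(G)/n\right)$, really do account for only $O(\eps e(G))$ edges — and the factor $2$ appearing in (M2) is there precisely to give comfortable slack in this estimate (indeed the argument yields the stronger bound $\tfrac{2}{3}\eps e(G)$).
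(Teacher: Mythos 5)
Your proof is correct and takes essentially the same route as the paper's: apply the Dean--Kouider theorem, split each path into two alternating matchings, and discard pieces that are too small to meet the threshold in (M3). The only immaterial difference is that you discard individual small matchings after splitting, whereas the paper discards short paths before splitting; both orderings give the required bounds (yours in fact yields the slightly sharper $\tfrac{2}{3}\eps e(G)$ in (M2), matching the paper's $\tfrac{4}{3}\eps e(G)$ slack).
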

\begin{proof}
	By \cref{thm:path_decomp}, there exists a set $\mathcal{P}$ of edge-disjoint paths in~$G$ such that $\s{\mathcal{P}} \leq \frac{2n}{3}$ and $E(G) = \bigcup_{P \in \mathcal{P}} E(P)$. Let $\mathcal{P}_{\mathrm{short}} = \{P \in \mathcal{P} \colon e(P) \leq 2\eps \frac{e(G)}{n}\}$. Let $\mathcal{M}$ be the set of matchings obtained by decomposing each path in $\mathcal{P} \setminus \mathcal{P}_{\mathrm{short}}$ into two matchings. We have $\s{\mathcal{M}} \leq 2 \s{\mathcal{P}} \leq 2n$. Moreover, each $M \in \mathcal{M}$ satisfies $\s{M} \geq \lfloor\frac{\eps e(G)}{n}\rfloor \geq \frac{\eps e(G)}{2n}$. Finally, $\s{E(G) \setminus \bigcup_{M \in \mathcal{M}} E(M)} \leq \frac{2n}{3} \cdot 2\eps \frac{e(G)}{n} \leq 2\eps e(G)$. 
\end{proof}
\noindent Matchings are useful in a $1$-independent context since the states of their edges (present or absent) are independent. We can thus combine Corollary~\ref{cor:M_decomp} with a Chernoff bound to show the number of edges in a $1$-independent model is concentrated around its mean.
\begin{lemma}\label{lem:edge_concentration}
	Let $\eps > 0$ and $p \in (0,1]$. Let~$G$ be an $n$-vertex graph with $e(G)\geq 2n/\eps$ and let $\mu \in \mathcal{M}_{1,p}(G)$. Then 
	\[
		\mathbb{P}\left[e(\mathbf{G}_{\mu}) \leq (1-3\eps)pe(G)\right] \leq 4n\exp\left(-\frac{\eps^3pe(G)}{6n}\right).
	\]
\end{lemma}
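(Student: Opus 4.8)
The plan is to use Corollary~\ref{cor:M_decomp} to cover almost all edges of $G$ by a bounded number of fairly large matchings, apply the Chernoff bound~\eqref{eq: Chernoff bound} to each matching separately (this is legitimate because within a single matching the edge-states are mutually independent by $1$-independence, all edges being pairwise at graph distance at least one), and then union-bound over the matchings. First I would apply Corollary~\ref{cor:M_decomp} with parameter $\eps$ (the hypothesis $e(G) \geq 2n/\eps$ is exactly what is needed) to obtain a family $\mathcal{M}$ of edge-disjoint matchings satisfying \ref{M1}--\ref{M3}. Write $e(G_{\mu}) \geq \sum_{M \in \mathcal{M}} \vert E(\mathbf{G}_{\mu}) \cap M\vert$ (equality would hold if $\mathcal{M}$ covered all edges, but we only need the lower bound, and dropping the uncovered edges only hurts us). For each $M \in \mathcal{M}$, the random variable $X_M := \vert E(\mathbf{G}_{\mu}) \cap M\vert$ is a sum of $\vert M\vert$ independent Bernoulli random variables each of mean exactly $p$, hence $X_M \sim \mathrm{Binom}(\vert M\vert, p)$.

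Next I would apply the Chernoff bound~\eqref{eq: Chernoff bound} to each $X_M$ with $\varepsilon$ chosen as the $\eps$ of the lemma: $\mathbb{P}[X_M \leq (1-\eps)p\vert M\vert] \leq 2\exp(-\eps^2 p \vert M\vert / 3)$. Using \ref{M3}, namely $\vert M\vert \geq \eps e(G)/(2n)$, each such failure probability is at most $2\exp(-\eps^3 p e(G)/(6n))$. Taking a union bound over the at most $2n$ matchings (by \ref{M1}), with probability at least $1 - 4n\exp(-\eps^3 p e(G)/(6n))$ we have $X_M \geq (1-\eps)p\vert M\vert$ simultaneously for all $M \in \mathcal{M}$. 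On this event,
\[
e(\mathbf{G}_{\mu}) \geq \sum_{M \in \mathcal{M}} X_M \geq (1-\eps)p\sum_{M \in \mathcal{M}}\vert M\vert = (1-\eps)p\left(e(G) - \Bigl\vert E(G)\setminus\bigcup_{M\in\mathcal{M}}M\Bigr\vert\right) \geq (1-\eps)p(1-2\eps)e(G),
\]
where the last inequality uses \ref{M2}. Finally $(1-\eps)(1-2\eps) = 1 - 3\eps + 2\eps^2 \geq 1 - 3\eps$, so $e(\mathbf{G}_{\mu}) \geq (1-3\eps)p e(G)$ on this event, which gives the claimed bound on the complementary probability.

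There is no real obstacle here — the lemma is essentially a packaging of Corollary~\ref{cor:M_decomp} with a Chernoff estimate. The only point requiring a little care is making sure the arithmetic of the error terms lines up: that the $1/2$ in \ref{M3} produces the factor $6$ in the exponent ($\eps^2/3 \cdot \eps/2 = \eps^3/6$), that the $2\eps e(G)$ loss in \ref{M2} together with the $(1-\eps)$ Chernoff slack still leaves at least $(1-3\eps)p e(G)$, and that the factor $4n$ (rather than $2n$) absorbs the $2$ from the two-sided Chernoff bound~\eqref{eq: Chernoff bound} multiplied by the $\vert\mathcal{M}\vert \leq 2n$ matchings. All of these are routine.
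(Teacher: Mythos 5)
Your proposal is correct and follows exactly the same route as the paper's proof: decompose into large matchings via Corollary~\ref{cor:M_decomp}, apply the Chernoff bound \eqref{eq: Chernoff bound} to each matching (whose edge-states are mutually independent by $1$-independence), union-bound over the at most $2n$ matchings using \ref{M1} and \ref{M3}, and recover the claimed deviation via \ref{M2}. The arithmetic bookkeeping you carry out matches the paper's line for line.
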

\begin{proof}
	We apply \cref{cor:M_decomp} to obtain a set $\mathcal{M}$ of edge-disjoint matchings in~$G$ such that properties \cref{M1,M2,M3} hold. For every $M \in \mathcal{M}$, we have $\s{M} \geq \frac{\eps e(G)}{2n}$. Thus by \cref{eq: Chernoff bound} and $1$-independence, 
	\begin{align*}
		\prob{e(\Gm \cap M) \leq (1-\eps)p\s{M}} \leq 2\exp\left(-\frac{\eps^3 p e(G)}{6n}\right).
	\end{align*}
	By a union bound, we have 
	\begin{align*}
		\prob{e(\Gm \cap M) \geq (1-\eps)p\s{M} \text{ for  all } M \in \mathcal{M}} &\geq 1-2\vert M\vert \exp\left(-\frac{\eps^3 p e(G)}{6n}\right) \\ 
		&\geq 1- 4n \exp\left(-\frac{\eps^3 p e(G)}{6n}\right).
	\end{align*}
	Thus with probability at least $1- 4n \exp\left(-\frac{\eps^3 p e(G)}{6n}\right)$ we have
	\[
		e(\Gm) \geq \sum_{M \in \mathcal{M}} (1-\eps)p \s{M} \geq (1-\eps)p(1-2\eps)e(G) \geq (1-3\eps)pe(G). 
	\]
	This completes the proof.
\end{proof}

\begin{lemma}\label{lemma: tripartition probabilistic part}
	Let $p \in (\frac{1}{2},1]$, and let $\eps = \eps(p) >0$ be fixed and sufficiently small. Let~$G$ be an $n$-vertex graph satisfying 
\begin{align}\label{assumption: pseudorandomness quant}
\left\vert e(G[U])-q\frac{\vert U\vert^2}{2}\right\vert \leq \frac{\eps^2}{4}qn^2
\end{align}\
	for all $U\subseteq V(G)$, where $q(n) \gg \frac{\log n}{n}$. Consider a fixed tripartition $V(G) = V_1 \sqcup V_2 \sqcup V_3$. Then for every $\mu \in \mathcal{M}_{1,p}(G)$, the following hold whp:
	\begin{enumerate}[label = \upshape{(P\arabic*)}, leftmargin= \widthof{P10000}]
		\item $e(\mathbf{G}_{\mu}[V_i]) \geq pq\frac{\s{V_i}^2}{2} - \eps qn^2$ for every $i \in [3]$. \label{Partition_i}
		\item $e(\mathbf{G}_{\mu}[V_i,V_j]) \geq pq\s{V_i}\s{V_j} - \eps qn^2$ for all $1 \leq i < j \leq 3$.
		\label{Partition_ii}
		\item For every $i \in [3]$ with $\s{V_i} \geq \eps^{1/4}n$, $\mathbf{G}_{\mu}[V_i]$ contains a unique largest connected component~$C_i$ of order at least $(\theta - \eps^{1/4})\s{V_i}$.
		\label{Partition_iii}
		\item For all $1 \leq i < j \leq 3$ with $\s{V_i}, \s{V_j} \geq \eps^{1/4} n$, there exists a path from~$C_i$ to~$C_j$ in $\mathbf{G}_{\mu}[V_i,V_j]$. 
		\label{Partition_iv}
		\item There is a unique largest connected component~$C$ in~$\Gm$ such that $\s{C} \geq (\theta - 3\eps^{1/4})n$ and for each $i \in [3]$ with $\s{V_i} \geq \eps^{1/4}n$, $C_i \subseteq C$.
		\label{Partition_v}
	\end{enumerate}
\end{lemma}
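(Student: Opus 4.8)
The plan is to prove \cref{Partition_i,Partition_ii} by a concentration argument and then to deduce \cref{Partition_iii,Partition_iv,Partition_v} from them \emph{deterministically}, using only the pseudorandomness hypothesis~\eqref{assumption: pseudorandomness quant}; thus the only use of randomness (and of ``whp'') will be in the edge counts \cref{Partition_i,Partition_ii}. For \cref{Partition_i}: if $\vert V_i\vert \le \sqrt{2\eps/p}\,n$ the claimed bound is non-positive and there is nothing to prove, so I assume $\vert V_i\vert$ larger, in which case \eqref{assumption: pseudorandomness quant} gives $e(G[V_i]) = (1+o(1))q\vert V_i\vert^2/2 \gg n$ since $qn \gg \log n$. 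I then apply \cref{lem:edge_concentration} to the $1$-independent measure induced by $\mu$ on $G[V_i]$ with a sufficiently small auxiliary parameter $\eps' = \eps'(\eps)$; this gives $e(\mathbf{G}_\mu[V_i]) \ge (1-3\eps')p\,e(G[V_i])$ whp, and a second appeal to~\eqref{assumption: pseudorandomness quant} turns this into \cref{Partition_i} with room to spare. \cref{Partition_ii} is identical, applying \cref{lem:edge_concentration} to the bipartite host $G[V_i,V_j]$ (a graph on at most $n$ vertices) and using $e(G[V_i,V_j]) = e(G[V_i\cup V_j]) - e(G[V_i]) - e(G[V_j])$ with~\eqref{assumption: pseudorandomness quant}. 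Each failure probability is $O(n\exp(-\Omega(qn))) = o(1)$, and I union-bound over the (at most six) relevant sets. From now on I work on the whp event that \cref{Partition_i,Partition_ii} hold.

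For \cref{Partition_iii}, fix $i$ with $v := \vert V_i\vert \ge \eps^{1/4}n$ and let $D_1, D_2, \dots$ be the components of $\mathbf{G}_\mu[V_i]$ in decreasing order of size, with $\vert D_1\vert = xv$; note that $v \ge \eps^{1/4}n$ makes every error term of the form $\eps qn^2$ below at most $O(\sqrt\eps)\,qv^2$. I bound $x$ from below in two ways. \emph{(Dichotomy.)} By \cref{Partition_i} and~\eqref{assumption: pseudorandomness quant}, at most $(1-p)qv^2/2 + O(\eps qn^2)$ edges of $G[V_i]$ are missing from $\mathbf{G}_\mu$; but all of the $\ge x(1-x)qv^2 - O(\eps qn^2)$ edges of $G$ between $D_1$ and $V_i\setminus D_1$ are missing. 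Comparing and using $\theta(1-\theta) = (1-p)/2$ gives $x(1-x) \le \theta(1-\theta) + O(\sqrt\eps)$, and since $2\theta - 1 = \sqrt{2p-1}$ is a fixed positive constant this forces $x \le 1-\theta+\eps^{1/4}$ or $x \ge \theta - \eps^{1/4}$ once $\eps$ is small in terms of $p$. \emph{(Direct count.)} Splitting the components into the at most $\eps^{-1/2}$ of order $\ge \sqrt\eps n$ and the rest, greedily grouped into boundedly many ``blobs'' each of order $< 2\sqrt\eps n$, I bound the edges of $\mathbf{G}_\mu$ inside each piece via~\eqref{assumption: pseudorandomness quant} (so the accumulated error stays $O(\eps^{1/4})qv^2$ --- crucial since $q$ may tend to $0$) and use $\sum_k\vert D_k\vert^2 \le \vert D_1\vert\sum_k\vert D_k\vert = xv^2$ within each group, obtaining $e(\mathbf{G}_\mu[V_i]) \le \tfrac12 qxv^2 + O(\eps^{1/4})qv^2$; with \cref{Partition_i} this gives $x \ge p - O(\eps^{1/4})$. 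Since $\theta > 1-p$ for every $p > \tfrac12$ (indeed $\theta - (1-p) = p - \tfrac12 + \tfrac12\sqrt{2p-1}$), the two lower bounds rule out $x \le 1-\theta+\eps^{1/4}$ for $\eps$ small, so $x \ge \theta - \eps^{1/4}$; and as $\theta - \eps^{1/4} > \tfrac12$ a component of this order is automatically unique, which is $C_i$.

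For \cref{Partition_iv} I run the bipartite analogue of this on $B := \mathbf{G}_\mu[V_i,V_j]$. Writing $(\alpha,\beta)$ for the pair of fractions of $V_i$ and $V_j$ lying in a component of $B$, the missing-edge bound now reads $\alpha + \beta - 2\alpha\beta \le 1-p + O(\sqrt\eps)$; for $\eps$ small its solution set splits into a region near $(0,0)$ on which $\alpha,\beta \le 1-p+O(\sqrt\eps)$ and --- by the symmetry $\alpha + \beta - 2\alpha\beta \mapsto (1-\alpha)+(1-\beta)-2(1-\alpha)(1-\beta)$ --- a region near $(1,1)$ on which $\alpha,\beta \ge p - O(\sqrt\eps) > \tfrac12$. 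At most one component of $B$ can lie in the near-$(1,1)$ region; if none did, every component would lie in the near-$(0,0)$ region, and estimating $\sum_k\alpha_k\beta_k \le \max_k\beta_k$ (using $\sum_k\alpha_k = 1$) I would get $e(B) \le (1-p+O(\eps^{1/4}))qv_iv_j$, contradicting \cref{Partition_ii} since $p > \tfrac12$. Hence some component $D^*$ of $B$ has more than $v_i/2$ vertices in $V_i$ and more than $v_j/2$ in $V_j$; as $\vert C_i\vert > v_i/2$ and $\vert C_j\vert > v_j/2$, $D^*$ meets both $C_i$ and $C_j$, yielding the required path. Then \cref{Partition_v} is a gluing step: at least one part has order $\ge n/3 \ge \eps^{1/4}n$; \cref{Partition_iv} places $C_i, C_j$ in one component of $\mathbf{G}_\mu[V_i,V_j] \subseteq \mathbf{G}_\mu$ for any two large parts, so all the $C_i$ with $\vert V_i\vert \ge \eps^{1/4}n$ lie in a single component $C$; summing $\vert C_i\vert \ge (\theta-\eps^{1/4})\vert V_i\vert$ over the large parts and noting that the (at most two) small parts contribute less than $2\eps^{1/4}n$ gives $\vert C\vert \ge (\theta - 3\eps^{1/4})n > \tfrac12 n$, so $C$ is also the unique largest component of $\mathbf{G}_\mu$.

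I expect the crux to be \cref{Partition_iii,Partition_iv}: because $p$ may be arbitrarily close to $\tfrac12$, the interval of forbidden component fractions around $\tfrac12$ is narrow, and the direct edge count alone only pins the largest component's order down to $\ge (p-o(1))v$, which falls short of $(\theta-o(1))v$; the resolution is to feed this into the parabola dichotomy and exploit the strict inequality $\theta > 1-p$ to kill the ``small largest component'' alternative. A secondary technical nuisance, handled by the blob grouping above, is that $q=q(n)$ may tend to $0$, so the trivial clique bound on edges inside small components is useless and one must apply the quantitative pseudorandomness~\eqref{assumption: pseudorandomness quant} to only boundedly many sets; one also has to watch throughout that a part can be as small as $\eps^{1/4}n$, which is exactly why the error terms are stated in the form $\eps qn^2$.
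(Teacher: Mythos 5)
Your proof is correct, and its overall architecture is the same as the paper's: establish \cref{Partition_i,Partition_ii} by combining the matching decomposition of \cref{lem:edge_concentration} with a Chernoff bound and a union bound over the constant number of induced (bipartite) subgraphs, then deduce \cref{Partition_iii,Partition_iv,Partition_v} deterministically from \cref{Partition_i,Partition_ii} together with the pseudorandomness hypothesis~\eqref{assumption: pseudorandomness quant}. Where you diverge is in the deterministic steps, especially \cref{Partition_iv}. For \cref{Partition_iii}, the paper splits on whether $\s{C_i}\ge\s{V_i}/2$: in the ``small'' case it partitions $V_i$ into at most four sets of size $\le\s{V_i}/2$ and uses $\sum x_k^2\le 1/2$ to contradict \cref{Partition_i}, and in the ``large'' case it uses $e(\Gm[V_i])\le e(\Gm[C_i])+e(\Gm[V_i\setminus C_i])$ together with the monotonicity of $x\mapsto x^2+(1-x)^2$ on $[\tfrac12,1]$. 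Your ``parabola dichotomy plus $x\ge p-O(\eps^{1/4})$'' is a repackaging of the same two counting inequalities without the explicit case split; the bound $\sum_k\s{D_k}^2\le\s{D_1}\,\s{V_i}$ is a slightly sharper replacement for the paper's four-set bound, and resolving the dichotomy via the strict inequality $p>1-\theta$ is equivalent to the paper's observation that $\s{C_i}>\s{V_i}/2$. For \cref{Partition_iv}, your route is genuinely different: the paper introduces the reachability sets $A_i$ (vertices of $V_i$ not reachable from $C_j$ inside $\Gm[V_i,V_j]$) and $A_j$, and bounds $e(\Gm[V_i,V_j])$ by the two cross pieces $e(\Gm[A_i,V_j\setminus A_j])+e(\Gm[V_i\setminus A_i,A_j])$, whereas you classify each individual component of $\Gm[V_i,V_j]$ by its cut inequality $g(\alpha_k,\beta_k)\le 1-p+O(\sqrt\eps)$ into a near-$(0,0)$ or near-$(1,1)$ region, show by a direct edge count that some component must be near $(1,1)$, and conclude that this component meets both $C_i$ and $C_j$ because its overlap with each side exceeds one half. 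Your version has the advantage of transparently handling the components of $\Gm[V_i,V_j]$ that meet neither $C_i$ nor $C_j$: these contribute edges inside $A_i\times A_j$ that the paper's displayed inequality silently discards, so the paper's two-set bookkeeping is a little looser than yours, while your near-$(0,0)$ bucket absorbs them cleanly. The blob-grouping device you introduce to control the accumulated pseudorandomness error over many small components is both correct and necessary in both \cref{Partition_iii} and \cref{Partition_iv} (since $q$ may tend to $0$); the paper deals with the analogous issue not inside the present lemma but later in the proof of Theorem~\ref{theorem: threshold for left meets right}\cref{LMR_i}, where $U_3$ is covered by $O(\eps^{-1/4})$ sets of size $\le\eps^{1/4}n$. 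Your \cref{Partition_v} matches the paper's.
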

\begin{proof}
	We first show that \cref{Partition_i} holds whp. Fix $i \in [3]$. If $\s{V_i} \leq \sqrt{\eps} n$, then \cref{Partition_i} trivially holds. Hence we assume that $\s{V_i} \geq \sqrt{\eps} n$. By our pseudorandomness assumption~\eqref{assumption: pseudorandomness quant} on~$G$ we have $e(G[V_i]) \geq q\frac{\s{V_i}^2}{2} -\frac{\eps}{2} qn^2$ (which for~$n$ sufficiently large is greater than $\frac{2n}{\eps}$ so that we can apply \cref{lem:edge_concentration}). Thus we have
	\begin{align*}
		\prob{e(\mathbf{G}_{\mu}[V_i]) \leq pq\frac{\s{V_i}^2}{2} - \eps qn^2}
		&\leq \prob{ e(\mathbf{G}_{\mu}[V_i]) \leq pe(G[V_i]) - \frac{\eps}{2} qn^2 } \\
		&\leq \prob{e(\mathbf{G}_{\mu}[V_i]) \leq \left(1- \frac{\eps}{3}\right)pe(G[V_i])} \\
		&\leq 4n\exp\left(-\Omega\left(\frac{e(G[V_i])}{n}\right)\right) = 4n\exp\left(-\Omega(qn)\right)=o(1),
	\end{align*}
where the inequality in the third  line follows from \cref{lem:edge_concentration}. So \cref{Partition_i} holds whp. 

Next we show that \cref{Partition_ii} holds whp. Fix $1 \leq i < j \leq 3$. If $\s{V_i} \leq \eps n$ or $\s{V_j} \leq \eps n$, then \cref{Partition_ii} trivially holds. Hence we may assume that $\s{V_i}, \s{V_j} \geq \eps n$. By~\eqref{assumption: pseudorandomness quant} applied three times (to~$V_i$,~$V_j$ and $V_i\cup V_j$), we have $e(G[V_i, V_j]) \geq q\s{V_i}\s{V_j} - 3\frac{\varepsilon^2}{4} qn^2$. In particular, $e(G[V_i,V_j]) \geq \frac{\varepsilon^2}{4} qn^2$, which for~$n$ sufficiently large is greater than $\frac{2n}{\varepsilon}$. We now apply \cref{lem:edge_concentration} to show that \cref{Partition_ii} holds whp. We have
\begin{align*}
	\prob{e(\mathbf{G}_{\mu}[V_i, V_j]) \leq pq\s{V_i}\s{V_j} - \eps qn^2} 
	&\leq \prob{e(\mathbf{G}_{\mu}[V_i, V_j]) \leq pe(G[V_i,V_j]) - \frac{\eps}{2} qn^2} \\
	&\leq \prob{e(\mathbf{G}_{\mu}[V_i,V_j]) \leq \left(1- \frac{\eps}{3}\right)pe(G[V_i,V_j])} \\
	&\leq 4n\exp\left(-\Omega\left(\frac{e(G[V_i,V_j])}{n}\right)\right) =4n\exp\left(-\Omega(qn)\right)=o(1).
\end{align*}
So \cref{Partition_ii} holds whp. 

Now we show that \cref{Partition_i} implies \cref{Partition_iii}. Assume that \cref{Partition_i} holds. Fix $i \in [3]$ and assume that $\s{V_i} \geq \eps^{1/4}n$. Let $C \subseteq V_i$ be a largest connected component in~$\Gm[V_i]$ and suppose for a contradiction that $\s{C} \leq (\theta - \eps^{1/4})\s{V_i}$.

If $\s{C}\leq \frac{\s{V_i}}{2}$, then there is a partition of~$V_i$ into at most~$4$ sets, each of size at most $\frac{\s{V_i}}{2}$, such that every connected component of~$\Gm[V_i]$ is entirely contained in one of the sets of the partition. Indeed, such a partition can be obtained by starting with a partition of $V_i$ into the connected components of~$\Gm[V_i]$ and then as long as the partition contains two parts of size at most $\frac{\s{V_i}}{4}$ choosing two such parts arbitrarily  and merging them into a single part. Since for any quadruple $(x_1,x_2, x_3, x_4)$ with $\frac{1}{2}\geq x_i\geq 0$ and $\sum_i x_i=1$ we have $\sum_i (x_i)^2\leq \frac{1}{2}$, it follows from~\cref{Partition_i} and~\eqref{assumption: pseudorandomness quant} that
\begin{align*}
pq\frac{\s{V_i}^2}{2}-\eps qn^2 &\leq e(\Gm[V_i])\leq q \frac{\s{V_i}^2}{4}+\varepsilon^2qn^2.
\end{align*}
Rearranging terms, this gives
\begin{align*}
(p-\frac{1}{2})q\frac{\eps^{1/2}n^2}{2}\leq (p-\frac{1}{2})q\frac{\s{V_i}^2}{2}\leq q(\varepsilon+\varepsilon^2)n^2,
\end{align*}
which is a contradiction for~$\eps$ chosen sufficiently small. Thus we may assume $\s{C}\geq \frac{\s{V_i}}{2}$. Now by \cref{Partition_i} and~\eqref{assumption: pseudorandomness quant} again, we have
\begin{align*}
	pq\frac{\s{V_i}^2}{2}-\eps qn^2 &\leq e(\Gm[V_i]) \leq e(\Gm[C]) + e(\Gm[V_i\setminus C]) \leq q\frac{\s{C}^2}{2} + q \frac{(\s{V_i}-\s{C})^2}{2} +\frac{\varepsilon^2}{2}qn^2.
\end{align*} 
Dividing by $q\frac{\s{V_i}^2}{2}$ and using $\s{V_i} \geq \eps^{1/4}n$, we deduce that 
\begin{align}
	\label{eq:C_bound}
	p - 3\sqrt{\eps} \leq \left(\frac{\s{C}}{\s{V_i}}\right)^2 + \left(1-\frac{\s{C}}{\s{V_i}}\right)^2.
\end{align}
Since $x \mapsto x^2 + (1-x)^2$ is an increasing function in the interval $[\frac{1}{2}, 1]$, $\frac{1}{2}\s{V_i} \leq \s{C} \leq (\theta - \eps^{1/4})\s{V_i}$, and $\theta^2 + (1-\theta)^2 =p$, we have
\begin{align*}
	\left(\frac{\s{C}}{\s{V_i}}\right)^2 + \left(1-\frac{\s{C}}{\s{V_i}}\right)^2 &\leq (\theta - \eps^{1/4})^2 + (1- \theta + \eps^{1/4})^2 \\
	&= \theta^2 +(1-\theta)^2 - 2\eps^{1/4}(2\theta -1) + 2\sqrt{\eps} \leq p - 4 \sqrt{\eps},
\end{align*}
contradicting  \cref{eq:C_bound}. Hence $\s{C} \geq (\theta - \eps^{1/4})\s{V_i}$. Note that since $\theta - \eps^{1/4} > 1/2$ (for $\eps=\eps(p)$ chosen sufficiently small),~$C$ is the unique largest component in~$\Gm[V_i]$. 
So \cref{Partition_iii} holds whp.

Next we show that \cref{Partition_ii} and \cref{Partition_iii} together imply \cref{Partition_iv}. Assume that \cref{Partition_ii} and \cref{Partition_iii} hold. Fix $1 \leq i < j \leq 3$ and assume that $\s{V_i}, \s{V_j} \geq \eps^{1/4}n$. Suppose for a contradiction that there is no path in $\Gm[V_i,V_j]$ from~$C_i$ to~$C_j$. Let $A_i \subseteq V_i$ and $A_j \subseteq V_j$ be the sets of vertices which cannot be reached by a path in $\Gm[V_i,V_j]$ from~$C_j$ and~$C_i$, respectively. Since there is no path from~$C_i$ to~$C_j$, we must have $C_i \subseteq A_i$ and $C_j \subseteq A_j$. By~\cref{Partition_ii}, by the definition of~$A_i$ and~$A_j$, and by~\eqref{assumption: pseudorandomness quant} (applied in~$A_i$, $A_j$, $V_i\setminus A_i$, $V_j\setminus A_j$, $A_i\cup(V_j\setminus A_j )$ and $A_j\cup(V_i\setminus A_i)$), we have
\begin{align} \label{eq:A_bound}
\begin{aligned}
	pq\s{V_i}\s{V_j} - \eps qn^2 \leq e(\Gm[V_i, V_j]) &\leq e(\Gm[A_i,V_j\setminus A_j]) + e(\Gm[V_i \setminus A_i, A_j]) \\
	&\leq q \s{A_i} (\s{V_j}-\s{A_j}) + q \s{A_j} (\s{V_i}-\s{A_i}) +\frac{3\eps^2}{2}qn^2.
\end{aligned}
\end{align}
Let $x_i = \frac{\s{A_i}}{\s{V_i}}$ and $x_j = \frac{\s{A_j}}{\s{V_j}}$. By \cref{Partition_iii}, $x_i \geq \frac{\s{C_i}}{\s{V_i}} \geq \theta - \eps^{1/4} \geq \frac{1}{2}$ and similarly $x_j \geq \frac{1}{2}$. From \cref{eq:A_bound} we get by dividing by $q\s{V_i}\s{V_j}$ and using $\s{V_i}, \s{V_j} \geq \eps^{1/4}n$, that 
\begin{align}\label{eq:Contr_iv}
	p - 2 \sqrt{\eps} \leq x_i(1-x_j) + x_j(1-x_i) = x_i + x_j -2x_ix_j \leq \frac{1}{2},
\end{align}
where the last inequality follows since $(x,y) \mapsto x+y-2xy$ is non-increasing in both~$x$ and~$y$ for $x,y \geq \frac{1}{2}$. Note that \cref{eq:Contr_iv} gives a contradiction for~$\eps$ sufficiently small since $p > \frac{1}{2}$. So \cref{Partition_iv} holds whp.

\noindent Finally, we observe that \cref{Partition_v} follows directly from \cref{Partition_iii} and \cref{Partition_iv}. Indeed let $k\in [3]$ denote the number of $i\in [3]$ for which $\vert V_i\vert \geq \eps^{1/4}n$ (note we can guarantee $k\geq 1$ provided $\eps<3^{-4}$). Then \cref{Partition_iii} and \cref{Partition_iv} together imply there is a unique connected component $C$ in $\mathbf{G}_{\mu}$ of size at least $(\theta-\eps^{1/4})(1-(3-k)\eps^{1/4})n>(\theta -3\eps^{1/4})n$ and containing $C_i$ for each $i\in [3]$ with $\vert V_i\vert \geq \eps^{1/4}n$. 
\end{proof}
\noindent Let $\mathcal{S}(p)$ denote the collection of $3\times 3$ matrices~$A$ with non-negative entries $A_{ij}\geq 0$, $i,j\in [3]$, satisfying the following inequalities:
\begin{align}
 A_{11}+A_{22}+p\leq \sum_{i,j} A_{ij} \leq 1 \label{eq: positive variables summing to 1} \\ 
A_{1j} \geq \frac{1}{2}\sum_i A_{ij} \quad  \forall j\in [3] \qquad \textrm{ and }\qquad   A_{i1} \geq \frac{1}{2}\sum_j A_{ij} \quad  \forall i \in [3]\label{eq: columns and rows respect giants}\\
\left(A_{1j}\right)^2+\left(A_{2j}\right)^2\geq p\left( \sum_i A_{ij}\right)^2 \quad \forall j\in [3] \label{eq: density in the columns} \\
\left(A_{i1}\right)^2+\left(A_{i2}\right)^2\geq p\left( \sum_j A_{ij}\right)^2 \quad \forall i\in [3]\label{eq: density in the rows}
\end{align}
The key step in our proof of Theorem~\ref{theorem: threshold for left meets right} will be, assuming that `$\mathrm{Left\  meets\  Right}$' does not occur whp, to use Lemma~\ref{lemma: tripartition probabilistic part} to exhibit a partition of~$[n]$ into~$9$ parts whose relative sizes can be used to find a solution to~$\mathcal{S}(p_{\star})$, for some~$p_{\star}$ satisfying $4-2\sqrt{3}<p_{\star}<p$. We will then be able to use the following lemma to derive a contradiction. 
\begin{lemma}\label{lemma: system has no solution}
For $4-2\sqrt{3}<p\leq 1$, $\mathcal{S}(p) = \varnothing$.	
\end{lemma}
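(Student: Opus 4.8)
The plan is to force the weight of $A$ onto its first row and first column, and then play this against \eqref{eq: positive variables summing to 1}. Write $r_i=\sum_j A_{ij}$, $c_j=\sum_i A_{ij}$ and $s=\sum_{i,j}A_{ij}$, so that \eqref{eq: positive variables summing to 1} reads $A_{11}+A_{22}+p\le s\le 1$, and set $\lambda:=(1-\theta)/\theta$. \emph{A per‑line estimate.} Fix a column $j$ with $c_j>0$. By \eqref{eq: columns and rows respect giants} we have $A_{1j}\ge c_j/2$, so $0\le A_{2j}\le c_j-A_{1j}$, and then \eqref{eq: density in the columns} gives $p\le (A_{1j}/c_j)^2+(A_{2j}/c_j)^2\le (A_{1j}/c_j)^2+(1-A_{1j}/c_j)^2$. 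Since $x\mapsto x^2+(1-x)^2$ is increasing on $[\tfrac12,1]$ and equals $p=\theta^2+(1-\theta)^2$ at $x=\theta$, while $A_{1j}/c_j\ge\tfrac12$, we conclude $A_{1j}\ge\theta c_j$, equivalently $A_{2j}+A_{3j}\le\lambda A_{1j}$; symmetrically, \eqref{eq: columns and rows respect giants} and \eqref{eq: density in the rows} give $A_{i1}\ge\theta r_i$, equivalently $A_{i2}+A_{i3}\le\lambda A_{i1}$, for every row $i$. Summing the first family over $j$ and the second over $i$ yields $r_1,c_1\ge\theta s$, hence $A_{11}\ge\theta^2 s$. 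If $p>5/9$, equivalently $p>1-\theta^2$, this already closes the argument: $A_{11}+A_{22}+p\ge\theta^2 s+p>\theta^2 s+(1-\theta^2)s=s$, contradicting \eqref{eq: positive variables summing to 1}. So from now on assume $4-2\sqrt3<p\le 5/9$.

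\emph{Reduction.} It suffices to prove the \emph{sharp} inequality
\begin{equation}\label{eq:sharp-squeeze}
\sum_{(i,j)\notin\{(1,1),(2,2)\}}A_{ij}\ \le\ 2\lambda\,(A_{11}+A_{22}).
\end{equation}
Indeed, \eqref{eq: positive variables summing to 1} gives both $p\le s-A_{11}-A_{22}=\sum_{(i,j)\notin\{(1,1),(2,2)\}}A_{ij}$ and $A_{11}+A_{22}\le s-p\le 1-p$; combining these with \eqref{eq:sharp-squeeze} and the identities $\lambda=(1-\theta)/\theta$ and $1-p=2\theta(1-\theta)$ forces $p\le 2\lambda(1-p)=4(1-\theta)^2$, i.e.\ $p\le 4-2\sqrt3$ by \eqref{eq: 4-2sqrt 3} --- contradicting the hypothesis. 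Note that any slack in \eqref{eq:sharp-squeeze} would weaken this conclusion past $4-2\sqrt3$, so the bound must be proved with no room to spare; the equality case is the configuration $A_{11}=\tfrac{\theta}{2-\theta}s$, $A_{12}=A_{21}=\tfrac{1-\theta}{2-\theta}s$ and all other entries $0$, which (taking $s=1$) is the element of $\mathcal S(p)$ witnessing that $p=4-2\sqrt3$ is the exact threshold.

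\emph{The main obstacle: proving \eqref{eq:sharp-squeeze}.} Applying the per‑line estimate to every row and column produces the linear relations $A_{12}+A_{13}\le\lambda A_{11}$, $A_{21}+A_{31}\le\lambda A_{11}$, $A_{22}+A_{23}\le\lambda A_{21}$, $A_{22}+A_{32}\le\lambda A_{12}$, $A_{23}+A_{33}\le\lambda A_{13}$, $A_{32}+A_{33}\le\lambda A_{31}$. These are \emph{not} enough on their own: they give only $\sum_{(i,j)\notin\{(1,1),(2,2)\}}A_{ij}\le 2\lambda A_{11}+\lambda(A_{13}+A_{31})$, which yields the far weaker $p\le 1-\theta^2=5/9$ and so says nothing new in the remaining range. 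To get \eqref{eq:sharp-squeeze} one must use \eqref{eq: density in the columns}--\eqref{eq: density in the rows} quantitatively rather than through their linear shadows. The key point is that whenever $A_{13}>0$, the quadratic constraint on the first row forces $A_{12}+A_{13}$ to fall \emph{strictly} short of $\lambda A_{11}$, by an amount that dominates $A_{13}$, and hence (through the relations above) dominates all of the weight that rows and columns $2,3$ can place in the ``deep'' block $\{A_{22},A_{23},A_{32},A_{33}\}$; the same holds for $A_{31}$. I would turn this into a proof of \eqref{eq:sharp-squeeze} by a finite case analysis on which of $A_{13},A_{31},A_{23},A_{32},A_{33}$ are zero, in each case reducing to the optimisation of a one‑ or two‑variable expression built directly from \eqref{eq: density in the columns}--\eqref{eq: density in the rows}; this bookkeeping, carried out without losing the quadratic constraints' full strength, is the delicate part of the argument.
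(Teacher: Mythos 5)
Your reduction is clean and the first half is correct: the per-line estimate $A_{1j}\ge\theta c_j$, $A_{i1}\ge\theta r_i$ (hence $A_{11}\ge\theta^2 s$) is a valid consequence of \eqref{eq: columns and rows respect giants} and \eqref{eq: density in the columns}--\eqref{eq: density in the rows}, and you dispatch the range $p>5/9$ correctly. The reduction of the remaining range to the inequality \eqref{eq:sharp-squeeze} is also sound, and you have correctly identified the equality case, which shows \eqref{eq:sharp-squeeze} is sharp. However, the argument has a genuine gap: you never prove \eqref{eq:sharp-squeeze}. You explicitly note that the linear per-line consequences are not strong enough (they only give $p\le 5/9$) and that one must use the quadratic constraints \eqref{eq: density in the columns}--\eqref{eq: density in the rows} ``quantitatively,'' but then defer the whole content to a prospective ``finite case analysis'' which you do not carry out. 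Since \eqref{eq:sharp-squeeze} must be proved with no slack at all (as you yourself observe), this is exactly the hard, load-bearing part of the lemma, and a plan to do it is not a proof of it. As it stands the proposal is incomplete.

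For comparison, the paper avoids formulating a sharp intermediate inequality entirely. It first uses transpose symmetry of $\mathcal{S}(p)$ to assume WLOG that $w:=A_{21}+A_{31}+A_{32}+A_{33}\ge p/2$, then bounds $w\le A_{21}+A_{31}/\theta$ via a row estimate, uses the column-$1$ quadratic constraint \eqref{eq: density in the columns} to write $A_{31}$ in terms of $A_{11},A_{21}$, and finally maximises the resulting expression over $A_{21}\in[0,\tfrac{1-\theta}{\theta}A_{11}]$ by convexity, then over $A_{11}\le 1-p$. This gives a direct contradiction with $w\ge p/2$ and avoids the bookkeeping that your route requires. If you want to complete your version, you would need to actually carry out the optimisation you describe: e.g.\ fix $A_{11},A_{22}$, use \eqref{eq: density in the columns}--\eqref{eq: density in the rows} to bound each of $A_{13},A_{31},A_{23},A_{32},A_{33}$ nonlinearly in terms of the row/column sums, and verify \eqref{eq:sharp-squeeze} in each of the resulting cases with the full quadratic strength. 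Until that is done, the lemma is not proved.
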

\begin{proof}
Suppose not and let $A \in \mathcal{S}(p)$. Note that the bound for $\sum_{i,j}A_{ij}$ in~\eqref{eq: positive variables summing to 1} implies
\begin{align}
 A_{11}+A_{22}\leq 1-p. \label{eq: no crossing}
\end{align}
By transpose-symmetry of $\mathcal{S}(p)$ and \cref{eq: positive variables summing to 1}, we may assume without loss of generality that 
\begin{equation}
	\label{eq:w_bound}
	w \coloneqq A_{21}+A_{31}+ A_{32}+A_{33}\geq \frac{p}{2}.
\end{equation}
Note that if $\sum_{j} A_{3j} > \frac{A_{31}}{\theta}$, then, since $x \mapsto x^2 +(1-x)^2$ is an increasing function of~$x$ in the interval $[\frac{1}{2}, 1]$ and since $A_{31}\geq \frac{1}{2}\sum_j A_{33j}$ by~\eqref{eq: columns and rows respect giants},
\[
	\left(\frac{A_{31}}{\sum_{j} A_{3j}}\right)^2 + \left(\frac{A_{32}}{\sum_{j} A_{3j}}\right)^2 \leq \left(\frac{A_{31}}{\sum_{j} A_{3j}}\right)^2 + \left(1-\frac{A_{31}}{\sum_{j} A_{3j}}\right)^2 < \theta^2 + (1-\theta)^2 = p,
\]
contradicting \cref{eq: density in the rows}. 
Hence 
\begin{equation}
	\label{eq:Row_bound}
	\sum_{j} A_{3j} \leq \frac{A_{31}}{\theta}.
\end{equation}
By an analogous argument, we have $\sum_{i} A_{i1} \leq \frac{A_{11}}{\theta}$ and thus
\begin{equation}
	\label{eq:A_22_bound}
	A_{21} \leq A_{21} + A_{31} \leq \frac{1-\theta}{\theta}A_{11}.
\end{equation}
Now, by \cref{eq:Row_bound} we have $w \leq A_{21} + \frac{A_{31}}{\theta}$. By \cref{eq: density in the columns}, we have that 
\[
	A_{31} \leq \frac{\sqrt{(A_{11})^2+(A_{21})^2}}{\sqrt{p}} -A_{11}-A_{21}.
\]
Substituting this expression into our upper bound on~$w$, we get 
\[
	w \leq -\frac{(1-\theta)A_{21} }{\theta}  -\frac{A_{11}}{\theta} +\frac{\sqrt {(A_{11})^2+(A_{21})^2 }}{\theta\sqrt{p}}.
\]
For~$A_{11}$ fixed, the continuous function $f_{A_{11}}(y) = -\frac{(1-\theta)y }{\theta}  -\frac{A_{11}}{\theta} +\frac{\sqrt {(A_{11})^2+y^2 }}{\theta\sqrt{p}}$ is convex in $(0, +\infty)$ as its derivative 
$f_{A_{11}}'(y) = - \frac{(1-\theta)}{\theta} +\frac{1}{\theta\sqrt{p}\sqrt{(A_{11}/y)^2+1}}$
is increasing in~$y$ in that interval. By \cref{eq:A_22_bound}, $0 \leq A_{21} \leq \frac{1-\theta}{\theta}A_{11}$, which together with the convexity of~$f_{A_{11}}$ gives:
\begin{align*}
	w &\leq \max\left\{f_{A_{11}}(0),\  f_{A_{11}}\left(\frac{1-\theta}{\theta}A_{11}\right)\right\} \\
	&\leq \max\left\{-\frac{A_{11}}{\theta} + \frac{A_{11}}{\theta \sqrt{p}},\  -\left(\frac{1- \theta}{\theta}\right)^2A_{11} -\frac{A_{11}}{\theta} + A_{11}\frac{\sqrt{1 + \left(\frac{1- \theta}{\theta}\right)^2}}{\theta \sqrt{p}}\right\} \\
	&\leq \max \left\{ \frac{A_{11}}{\theta}\left(\frac{1}{\sqrt{p}}-1\right),\  \frac{A_{11}}{\theta}(1-\theta)\right\} \\
	&\leq \max \left\{ \frac{1-p}{\theta}\left(\frac{1}{\sqrt{p}}-1\right),\  \frac{1-p}{\theta}(1-\theta)\right\},
\end{align*}
where the last inequality follows from the upper bound \cref{eq: no crossing} on~$A_{11}$. We now claim that this contradicts~\cref{eq:w_bound}, i.e.\ that
\[
	\max \left\{ \frac{1-p}{\theta}\left(\frac{1}{\sqrt{p}}-1\right), \frac{1-p}{\theta}(1-\theta)\right\} < \frac{p}{2}.
\]
Note that $p \mapsto \frac{1-p}{\theta}\left(\frac{1}{\sqrt{p}}-1\right) - \frac{p}{2}$ and $p \mapsto \frac{1-p}{\theta}(1-\theta) - \frac{p}{2}$ are both strictly decreasing functions (as~$\theta$ is increasing in~$p$). Hence to prove the claim above, it suffices to show that for $p = 4-2\sqrt{3}$, we have $\frac{1-p}{\theta}\left(\frac{1}{\sqrt{p}}-1\right) \leq \frac{p}{2}$ and $\frac{1-p}{\theta}(1-\theta) \leq \frac{p}{2}$. Let $p = 4-2\sqrt{3}$. Note that $(\sqrt{3}-1)^2 = 4-2\sqrt{3}$ and $(2-\sqrt{3})^2 = 7-4\sqrt{3}$. Hence $\sqrt{p} = \sqrt{3}-1$, $\sqrt{2p-1} = 2-\sqrt{3}$, and $\theta = (3-\sqrt{3})/2$. Now it is easy to check that 
\[
	\frac{1}{\sqrt{p}} -1 = 1 - \theta = \frac{\theta}{(1-p)}\frac{p}{2}=\frac{\sqrt{3}-1}{2},
\]
which completes the proof.
\end{proof}
\noindent We are now ready to complete the proof of \cref{theorem: threshold for left meets right} \cref{LMR_i}.
\begin{proof}
	Let $p > 4-2\sqrt{3}$ be fixed. Let $\eps = \eps(p) > 0$ be fixed and chosen sufficiently small. Let $p_{\star} =\frac{1}{2}\left(4-2\sqrt{3}+p\right)$. 	
	Finally, let~$n$ be sufficiently large so that for $G=G_n$ the pseudorandomness assumption~\eqref{assumption: pseudorandomness quant} holds, and let $\mu \in \mathcal{M}_{1,p}(H)$, where $H=K_2\times G_n$.

	For $i \in [2]$, let $\Gm^i = \Hm[\{i\} \times [n]]$. For $i,j \in [2]$ with $i \neq j$, let $\mathcal{E}_{ij}$ be the event that for any partition $(\{i\}\times V_1) \sqcup (\{i\}\times V_2) \sqcup (\{i\}\times V_3)$ of $\{i\}\times [n]$ such that $\{i\} \times V_1$ and $\{i\} \times V_2$ are each a union of components of order at least $\eps^{1/4}n$ in~$\Gm^i$, we have that~$\Gm^j$ satisfies \cref{Partition_i,Partition_ii,Partition_iii,Partition_iv,Partition_v} of \cref{lemma: tripartition probabilistic part} with $\{j\} \times V_1$, $\{j\} \times V_2$, $\{j\} \times V_3$ playing the roles of~$V_1$, $V_2$, $V_3$. Given~$\Gm^i$ and~$\eps$ fixed, the number of such partitions is at most $3^{\eps^{-1/4}}=O(1)$. Hence \cref{lemma: tripartition probabilistic part} implies that $\mathcal{E}_{ij}$ holds whp.

Further, by $1$-independence and~\eqref{eq: Chernoff bound}, whp  there are at least $(p - \eps)n$ edges in the matching $\Hm[\{1\} \times [n], \{2\} \times [n]]$. Let $\mathcal{E}_{\mathrm{good}}$ be the event that $\mathcal{E}_{12}$ and $\mathcal{E}_{21}$ both occur and that in addition $e(\Hm[\{1\} \times [n], \{2\} \times [n]])\geq  (p - \eps)n$. Then $\mathcal{E}_{\mathrm{good}}$ holds whp. We claim that if  $\mathcal{E}_{\mathrm{good}}$ holds, then so does `$\mathrm{Left\  meets\  Right}$' (which implies the statement of the theorem).

	Suppose for a contradiction that $\mathcal{E}_{\mathrm{good}}$ holds but `$\mathrm{Left\  meets\  Right}$' does not. For $i \in [2]$, let~$C^i$ be the unique largest connected component in~$\Gm^i$ (this exist by \cref{Partition_v}). 
	Let $U_1 \sqcup U_2 \sqcup U_3 = [n]$ and $W_1 \sqcup W_2 \sqcup W_3 = [n]$ be such that the following hold.
	\begin{enumerate}[label = (\upshape{\alph*})]
		\item $\{1\} \times U_1$ is the union of~$C^1$ and all connected components in~$\Gm^1$ of order at least $\eps^{1/4}n$ that can be reached from~$C^1$ by a path in~$\Hm$.
		\item $\{1\} \times U_2$ is the union of all other connected components in~$\Gm^1$ of order at least $\eps^{1/4}n$. 
		\item $\{1\} \times U_3$ is the union of all connected components of order less than $\eps^{1/4}n$ in~$\Gm^1$.
		\item $\{2\} \times W_1$ is the union of all connected components in~$\Gm^2$ of order at least $\eps^{1/4}n$ that cannot be reached from~$C^1$ by a path in~$\Hm$.
		\item $\{2\} \times W_2$ is the union of all connected components in~$\Gm^2$ of order at least $\eps^{1/4}n$ that can be reached from~$C^1$ by a path in~$\Hm$.
		\item $\{2\} \times W_3$ is the union of all connected components in~$\Gm^2$ of order less than $\eps^{1/4}n$.
	\end{enumerate}
We can think of these partitions as giving us a $3$-colouring of the vertices in~$V(H)$: a vertex in $\{i\}\times V_n$ is coloured red if it belongs to a large component in $\mathbf{G}^i_{\mu}$ and can be reached from~$C^1$ in $\mathbf{H}_{\mu}$, blue if it belongs to a large component in $\mathbf{G}^i_{\mu}$ and cannot be reached by~$C^1$ in $\mathbf{H}_{\mu}$, and green if it belongs to a small component in $\mathbf{G}^i_{\mu}$. The key properties of this colouring are that the large components~$C^1$ and~$C^2$ in $\mathbf{G}^1_{\mu}$ and   $\mathbf{G}^2_{\mu}$ are coloured red and blue respectively, that there are no edges from red vertices to blue vertices, and that the green vertices span few edges in $\mathbf{G}^i_{\mu}$, $i\in [2]$. Our $3$-colouring of~$V(H)$ gives rise to a partition of~$[n]$ into~$9$ sets in a natural way, by considering the possible colour pairs for $((1,v), (2,v))$, $v\in [n]$. This partition is illustrated in \cref{fig:partition}.

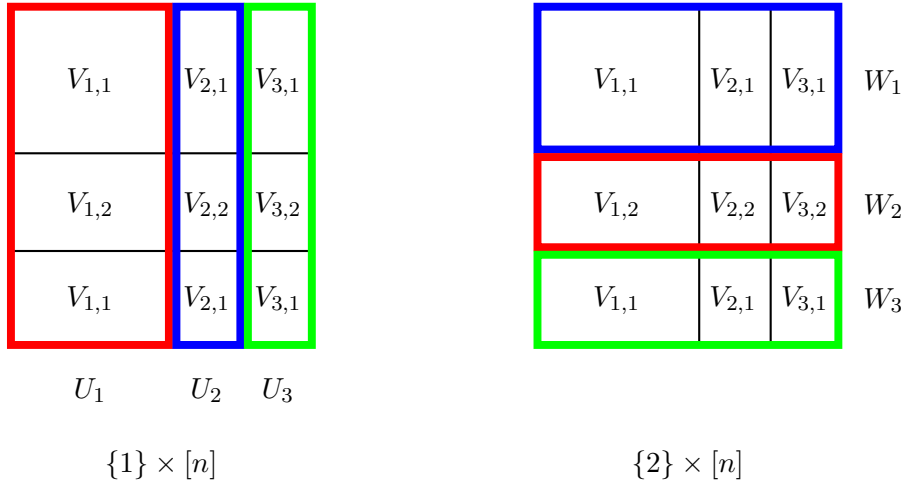
\begin{figure}
	\centering
	\begin{tikzpicture}
	\path[edge] (-1,1.25) -- (3,1.25);
	\path[edge] (-1,2.55) -- (3,2.55);
	\path[edge] (8.15,0) -- (8.15,4.5);
	\path[edge] (9.1,0) -- (9.1,4.5);
	
	\draw[line width = 3pt, red] (-1,0) rectangle (1.1,4.5);
	\draw[line width = 3pt, blue] (1.2,0) rectangle (2.05,4.5);
	\draw[line width = 3pt, green] (2.15,0) rectangle (3,4.5);
	\draw[line width = 3pt, blue] (6,2.6) rectangle (10,4.5);
	\draw[line width = 3pt, red] (6,1.3) rectangle (10,2.5);
	\draw[line width = 3pt, green] (6,0) rectangle (10,1.2);

	\node at (0.05,3.5) {$V_{1,1}$};
	\node at (1.6,3.5) {$V_{2,1}$};
	\node at (2.55,3.5) {$V_{3,1}$};
	\node at (0.05,1.85) {$V_{1,2}$};
	\node at (1.6,1.85) {$V_{2,2}$};
	\node at (2.55,1.85) {$V_{3,2}$};
	\node at (0.05,0.6) {$V_{1,1}$};
	\node at (1.6,0.6) {$V_{2,1}$};
	\node at (2.55,0.6) {$V_{3,1}$};
	
	\node at (0.05+7,3.5) {$V_{1,1}$};
	\node at (1.6+7,3.5) {$V_{2,1}$};
	\node at (2.55+7,3.5) {$V_{3,1}$};
	\node at (0.05+7,1.85) {$V_{1,2}$};
	\node at (1.6+7,1.85) {$V_{2,2}$};
	\node at (2.55+7,1.85) {$V_{3,2}$};
	\node at (0.05+7,0.6) {$V_{1,1}$};
	\node at (1.6+7,0.6) {$V_{2,1}$};
	\node at (2.55+7,0.6) {$V_{3,1}$};
	
	\node at (0.05,-0.6) {$U_1$};
	\node at (1.6,-0.6) {$U_2$};
	\node at (2.55,-0.6) {$U_3$};
	
	\node at (10.6,3.5) {$W_1$};
	\node at (10.6,1.85) {$W_2$};
	\node at (10.6,0.6) {$W_3$};
	
	\node at (1,-1.6) {$\{1\} \times [n]$};
	\node at (8,-1.6) {$\{2\} \times [n]$};
	
	\end{tikzpicture}
	\caption{The partition of $V(H)$}
	\label{fig:partition}
\end{figure}

We now investigate the relative sizes of this $9$-partition. For $i,j \in [3]$, let $V_{ij} = U_i \cap W_j$. Since there is no path from~$C^1$ to~$C^2$ in~$\Hm$, there are no edges present in the bipartite graphs $\Hm[\{1\} \times V_{11}, \{2\} \times V_{11}]$ and $\Hm[\{1\} \times V_{22}, \{2\} \times V_{22}]$. Since $\mathcal{E}_{\mathrm{good}}$ holds, there are at least~$(p-\eps)n$ edges in $\Hm[\{1\} \times [n], \{2\} \times [n]]$ in total, which implies
	\begin{align} \label{eq:A_1}
		\s{V_{11}} + \s{V_{22}} \leq (1-p+\eps)n. 
	\end{align}
	Moreover, $\sum_{i,j} \s{V_{ij}} = n$. Hence
	\begin{align} \label{eq:A_2}
		\sum_{i,j} \s{V_{ij}} - \s{V_{11}} - \s{V_{22}} \geq (p-\eps)n.
	\end{align}
	For $j\in [3]$, if $\vert W_j\vert \geq  \eps^{1/4}n$, we have by \cref{Partition_iii,Partition_v} that there is a unique largest connected component~$C_j^1$ in $\Gm^1[\{1\} \times W_j]$, and that this component satisfies  $C_j^1\subseteq C^1$ and $\vert C_j^1\vert \geq (\theta -\eps^{1/4})\vert W_j\vert$, which for $\eps=\eps(p)$ chosen sufficiently small is greater than $\frac{1}{2}\vert W_j\vert$. Translating this in terms of our $9$-partition, we have that for all $j\in [3]$ such that $\sum_i V_{ij}\geq \eps^{1/4}n$
	\begin{align} \label{eq:A_3}
		\s{V_{1j}} \geq \frac{1}{2} \sum_i\s{V_{ij}}
	\end{align}
	holds. By a symmetric argument, for every $i \in [3]$ such that $\sum_j V_{ij}\geq \eps^{1/4}n$ we have
	\begin{align} \label{eq:A_4}
		\s{V_{i1}} \geq  \frac{1}{2}\sum_j \s{V_{ij}}.
	\end{align}
	Let $j \in [3]$. Note that~$\Gm^1[U_3]$ contains only connected components of size at most $\eps^{1/4}n$. These components can be covered by at most $\frac{2}{\eps^{1/4}}$ sets, each of order at least $\frac{\eps^{1/4}n}{2}$ and at most $\eps^{1/4}n$. By~\eqref{assumption: pseudorandomness quant} (which holds by our choice of~$n$), each of these sets contains at most $q\frac{\eps^{1/2}n^2}{2} +\frac{\varepsilon^2}{4}qn^2<q \eps^{1/2}n^2$ edges. Hence we have $e(\Gm^1[U_3]) \leq 2\eps^{1/4}qn^2$. Since $V_{3j} \subseteq U_3$, we have $e(\Gm^1[V_{3j}]) \leq 2\eps^{1/4}qn^2$. By \cref{Partition_i} and the pseudorandomness assumption~\eqref{assumption: pseudorandomness quant}, we have 
	\begin{align*}
		pq\frac{\s{W_j}^2}{2} - \eps qn^2 &\leq e(\Gm^1[\{1\} \times W_j])\\
		 &= e(\Gm^1[\{1\} \times V_{1j}]) + e(\Gm^1[\{1\} \times V_{2j}]) + e(\Gm^1[\{1\} \times V_{3j}]) \\
		&\leq q\frac{\s{V_{1j}}^2}{2} + q\frac{\s{V_{2j}}^2}{2} + 2\eps^{1/4}qn^2 + \frac{\varepsilon^2}{2}qn^2<q\frac{\s{V_{1j}}^2}{2} + q\frac{\s{V_{2j}}^2}{2} + 3\eps^{1/4}qn^2.
	\end{align*}
	Hence, for every $j \in [3]$ and~$\eps$ chosen sufficiently small,
	\begin{align} \label{eq:A_5}
		\s{V_{1j}}^2 + \s{V_{2j}}^2 \geq p\left(\sum_i \s{V_{ij}}\right)^2 - 7\eps^{1/4}n^2.
	\end{align}
	Similarly, for every $i \in [3]$,
	\begin{align} \label{eq:A_6}
		\s{V_{i1}}^2 + \s{V_{i2}}^2 \geq p\left(\sum_j \s{V_{ij}}\right)^2 - 7\eps^{1/4}n^2.
	\end{align}
	Let~$A$ be the $3 \times 3$ matrix with entries
	\begin{align*}
		A_{ij} = 
		\begin{cases}
			\frac{\s{V_{ij}}}{n}, &\text{ if } \s{V_{ij}} \geq \eps^{1/9}n, \\
			0, &\text{ otherwise}.
		\end{cases}
	\end{align*}
	We claim that, provided $\eps=\eps(p)$ was chosen sufficiently small, $A \in \mathcal{S}(p_{\star})$. Indeed,~$A$ clearly has nonnegative entries summing up to at most~$1$, thus the second inequality of~\eqref{eq: positive variables summing to 1} is satisfied,  while the first inequality (with~$p_{\star}$ instead of~$p$) follows from~\eqref{eq:A_2} and an appropriately small choice of~$\eps$ (more specifically, we need $p_{\star}\leq p-\varepsilon  - 7\varepsilon^{1/9}$). Indeed,
		\begin{align*}
		\sum_{i,j} A_{ij} - A_{11} - A_{22} \geq \sum_{i,j} \frac{\s{V_{ij}}}{n} - \frac{\s{V_{11}}}{n} -\frac{\s{V_{22}}}{n} - 7\eps^{1/9} \geq p - \eps - 7 \eps^{1/9} \geq p_{\star},
		\end{align*}	
		where the penultimate inequality uses~\eqref{eq:A_2}.

	Next, consider $j\in [3]$. If $\sum_i \s{V_i}\geq \eps^{1/4}n$, then by~\eqref{eq:A_3} we have $A_{1j}\geq \frac{1}{2}\sum_i A_{ij}$ (regardless of whether some of the~$V_{ij}$, $i\in [3]$ have size less than $\varepsilon^{1/9}n$). Other the other hand if $\sum_i \s{V_i}< \eps^{1/4}n$, then $A_{1j}=A_{2j}=A_{3j}=0$. In either case, $A_{1j}\geq \frac{1}{2}\sum_i A_{ij}$ holds. By a symmetric argument we obtain that $A_{i1}\geq \frac{1}{2}\sum_j A_{ij}$ holds for every $i\in [3]$. Thus~\eqref{eq: columns and rows respect giants} is satisfied  by~$A$.

	Finally, pick $j\in [3]$. If $\s{V_{i2}}\geq \eps^{1/9}n$, then by~\eqref{eq: columns and rows respect giants} which we have just established and the definition of~$A_{i1}$, we have $\s{V_{i1}}\geq \eps^{1/9}n$ also. In this case~\eqref{eq:A_5} and an appropriately small choice of~$\eps$ ensure that $(A_{1j})^2+ (A_{2j})^2 \geq p_{\star} \left(\sum_i A_{ij}\right)^2$. On the other hand, suppose $\s{V_{i2}}< \eps^{1/9}n$. If $\s{V_{i1}}< \eps^{1/9}n$, then by~\eqref{eq: columns and rows respect giants} the inequality $(A_{1j})^2+ (A_{2j})^2 \geq p_{\star} \left(\sum_i A_{ij}\right)^2$ holds trivially, since the right hand-side is zero. So suppose that $\s{V_{i1}}\geq \eps^{1/9}n>\s{V_{i2}}$. Then~\eqref{eq:A_5}, and $p>1/2$ imply that
	\begin{align*}
	\s{V_{i1}}^2>\s{V_{i1}}^2 - \s{V_{i2}}\left(2p\s{V_{i1}}  -(1-p)\s{V_{i2}}\right)\geq p\left(\s{V_{i1}}+\s{V_{i3}}\right)^2-7\varepsilon^{1/4}n^2.
\end{align*}
Together with an appropriately small choice of~$\eps$, this ensures $(A_{1j})^2+ (A_{2j})^2 \geq p_{\star} \left(\sum_i A_{ij}\right)^2$ again. Thus in every case~\eqref{eq: density in the columns} is satisfied by~$A$ (with~$p_{\star}$ instead of~$p$). A symmetric argument shows~$A$ satisfies~\eqref{eq: density in the rows} for~$p_{\star}$ as well. 

Thus $A \in \mathcal{S}(p_{\star})$ as claimed. However, since $p_{\star} > 4-2\sqrt{3}$, \cref{lemma: system has no solution} implies that $\mathcal{S}(p_{\star}) = \varnothing$, a contradiction. Thus the event $\mathcal{E}_{\mathrm{good}}$, which holds whp, does imply the event `$\mathrm{Left\  meets\  Right}$', proving the theorem.
\end{proof}

\section{Proof of \cref{theorem: 1-indep percolation in Z^2 times Kn,theorem: answer to Day FR Hancock question,theorem: pseudo random graphs,theorem: pseudo random graphs long paths}}\label{section: proofs of main theorems}
Our main theorems are all proved via a renormalisation argument combined with Theorem~\ref{theorem: threshold for left meets right}. Given two graphs $G$ and $H$, we may view the Cartesian product $H\times G$ as a kind of `augmented' version of $H$, and use any $1$-independent random graph $(\mathbf{H}\times \mathbf{G})_\mu$ on $H\times G$ to construct a new $1$-independent random graph $\mathbf{H}_{\nu}$ on $H$ as follows: given an edge $uv\in E(H)$, we let $uv$ be present in $
\mathbf{H}_{\nu}$ if  in the restriction of $(\mathbf{H}\times \mathbf{G})_\mu$ to $\{u,v\}\times V(G)$ there is a connected component containing strictly more than half of the vertices in each of $\{u\}\times V(G)$ and $\{v\}\times V(G)$.

That $\mathbf{H}_{\nu}$ is a $1$-independent random graph follows immediately from the fact that $(\mathbf{H}\times \mathbf{G})_\mu$ was $1$-independent: the states of edges inside vertex-disjoint edge-sets in $\mathbf{H}_{\nu}$ are determined by the states of edges inside vertex-disjoint edge sets in $(\mathbf{H}\times \textbf{G})_\mu$. Further, any path in $\mathbf{H}_{\nu}$ can be `lifted' up to a path in $(\mathbf{H}\times \mathbf{G})_\mu$ of equal or greater length: if $uv, vw$ are present in $\mathbf{H}_{\nu}$, then there exist connected subgraphs $C_{uv}$ and $C_{vw}$ in $(\mathbf{H}\times \mathbf{G})_\mu$ with $C_{uv}\subseteq \{u,v\}\times V(G)$, $C_{vw}\subseteq \{v,w\}\times V(G)$, $C_{uv}\cap \left(\{u\}\times V(G)\right)$ and $C_{vw}\cap \left(\{w\}\times V(G)\right)$ both non-empty, and $C_{uv}, C_{v,w}$ both containing strictly more than half of the vertices in $\{v\}\times V(G)$ (and hence having non-empty intersection).

Now the likelihood of an edge $uv$ being present in $\mathbf{H}_{\nu} $ is exactly the probability of the event corresponding to `Left meets Right' occurring in the restriction of $(\mathbf{H}\times \mathbf{G})_\mu$ to the vertex-set $\{u,v\}\times V(G)$ (which induces a copy of $K_2\times G$ in $H\times G$). Thus for $p>4-2\sqrt{3}$ and a suitable choice of $G$, we can use Theorem~\ref{theorem: threshold for left meets right}(i) to ensure that each edge in the $1$-independent random graph $\mathbf{H}_{\nu}$ is present with probability $1-o(1)$. With such a high edge probability, we can then establish the almost sure existence of infinite components or long paths in $\mathbf{H}_{\nu}$ in a straightforward way --- either by using results in the literature, or by a direct argument.

On the other hand if $p\leq 4-2\sqrt{3}$, we can use ideas from the lower bound construction in the proof of Theorem~\ref{theorem: threshold for left meets right}(ii), which date back to~\cite{DayFalgasRavryHancock20, FalgasRavry12}, in order to construct a $1$-independent random subgraph $\mathbf{G}$ of $H\times K_n$ that fails to percolate (or, if $H=\mathbb{Z}$, that only contain paths of length $O(n)$). For the convenience of the reader, we sketch below how this works in the special case $H=\mathbb{Z}^2$.

Take $p=4-2\sqrt{3}$, and set $\theta=(1+\sqrt{2p-1})/2$.  Independently assign to each vertex $(x,y,z)\in \mathbb{Z}^2\times V(K_n)$ a random state $S_{x,y,z} \in\{0,1,\star\}$ as follows:
\begin{itemize}
	\item if $\|(x,y)\|_{\infty}\cong 0 \mod6$, set $S_{x,y,z}=1$ with probability $1$;
	\item if $\|(x,y)\|_{\infty} \cong 1 \mod 6$, set $S_{x,y,z}=1$ with probability $\theta$, and $0$ otherwise;
	\item if $\|(x,y)\|_{\infty} \cong 2 \mod 6$, set $S_{x,y,z}=0$ with probability $\sqrt{p}$, and $\star$ otherwise;
	\item if $\|(x,y)\|_{\infty} \cong 3 \mod 6$, set $S_{x,y,z}=0$ with probability $1$;
	\item if $\|(x,y)\|_{\infty} \cong 4 \mod 6$, set $S_{x,y,z}=0$ with probability $\theta$, and $1$ otherwise;
	\item if $\|(x,y)\|_{\infty} \cong 5 \mod 6$, set $S_{x,y,z}=1$ with probability $\sqrt{p}$, and $\star$ otherwise.
\end{itemize}
We now use these random states to build a $1$-independent random graph $\mathbf{G}$ as follows. Given an edge$\{(x_1,y_1,z_1),(x_2,y_2,z_2)\}$ of $H\times K_n$, include it in $\mathbf{G}$ if one of the following holds:
\begin{itemize}
	\item $S_{x_1,y_1,z_1}=S_{x_2, y_2, z_2}\neq \star$
	 \item $\|(x_1,y_1)\|_{\infty} <\|(x_2,y_2)\|_{\infty}$ and $S_{x_2,y_2,z_2}=\star$. 
\end{itemize} 
Then the choice of probabilities for our random states ensure each edge is open with probability at least $p=4-2\sqrt{3}$, and our edge rules further imply that every connected component $C$ in $\mathbf{G}$ meets at most four consecutive  cylinders  $\mathcal{C}_r:=\{(x,y, z): \ \|(x,y)\|_{\infty}=r\}$, $r\in \mathbb{Z}_{\geq 0}$ since, as is easily checked, a connected component in $\mathbf{G}$ cannot both contain a vertex assigned state $0$ and a vertex assigned state $1$ --- we leave this as an exercise to the reader, and refer them to~\cite[Corollary 24]{DayFalgasRavryHancock20} for a proof of this fact in a more general setting. In particular, we have that $\mathbf{G}$ does not percolate.

Having thus outlined our proof ideas, we now fill in the details. First we formalise our renormalisation argument with the following lemma.
\begin{lemma}[Renormalisation lemma]\label{lemma: renormalisation}
Let~$H$ be a graph. Let $q=q(n)$ satisfy $nq(n)\gg \log n$, and let  $(G_n)_{n \in \mathbb{N}}$ be a sequence of $n$-vertex graphs which is weakly $q$-pseudorandom.
 Then for every $\eps>0$ and every $p>4-2\sqrt{3}$ fixed, there exists~$n_0$ such that for all $n\geq n_0$, $G=G_n$ and $\mu \in \mathcal{M}_{1, \geq p}(H\times G)$ there exists $\nu\in \mathcal{M}_{1, \geq 1-\eps}(H)$ and a coupling between $\mathbf{H}_{\nu}$ and $\mathbf{\left(H\times G\right)}_{\mu}$ such that there exists a path from~$u$ to~$v$ in $\mathbf{H}_{\nu}$ only if there exists a path from $\{u\}\times V(G)$ to $\{v\}\times V(G)$ in $\mathbf{\left(H\times G\right)}_{\mu}$.
\end{lemma}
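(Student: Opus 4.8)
The plan is to define $\nu$ explicitly via the renormalisation rule sketched just before the lemma and then verify the three properties we need: $1$-independence, the edge-probability lower bound $1-\eps$, and the path-lifting coupling. Fix $G=G_n$, with~$n$ to be specified below, and given $\mu \in \mathcal{M}_{1,\geq p}(H\times G)$ declare an edge $uv\in E(H)$ to be present in $\mathbf{H}_{\nu}$ precisely when the restriction of $\mathbf{(H\times G)}_{\mu}$ to the vertex set $\{u,v\}\times V(G)$ --- which induces a copy of $K_2\times G$ in $H\times G$ --- contains a connected component meeting strictly more than half of $\{u\}\times V(G)$ and strictly more than half of $\{v\}\times V(G)$; that is, the event `$\mathrm{Left\ meets\ Right}$' holds on that copy. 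Since $\mathbf{H}_{\nu}$ is then a deterministic function of $\mathbf{(H\times G)}_{\mu}$, this simultaneously specifies the coupling.

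To see that $\nu$ is $1$-independent, take $A,B\subseteq E(H)$ spanning disjoint vertex sets $V(A)$ and $V(B)$. The states in $\mathbf{H}_{\nu}$ of the edges of~$A$ are a function of the restriction of $\mathbf{(H\times G)}_{\mu}$ to the edges of $H\times G$ spanned by $V(A)\times V(G)$, and similarly for~$B$. As $V(A)\times V(G)$ and $V(B)\times V(G)$ are disjoint, these two edge sets lie at graph distance at least~$1$ in $H\times G$, so by the $1$-independence of~$\mu$ the two restrictions are independent; hence $E(\mathbf{H}_{\nu})\cap A$ and $E(\mathbf{H}_{\nu})\cap B$ are independent, as required.

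For the edge probability, note that the restriction of $\mu$ to $\{u,v\}\times V(G)$ lies in $\mathcal{M}_{1,\geq p}(K_2\times G)$, and `$\mathrm{Left\ meets\ Right}$' is an increasing event, so by the edge-deletion coupling from the remark following the definition of $p_{1,c}(\cdot)$ its probability under this restriction is at least its probability under some measure in $\mathcal{M}_{1,p}(K_2\times G)$. By Theorem~\ref{theorem: threshold for left meets right}(i) this is $1-o(1)$ as $n\to\infty$, and moreover the $o(1)$ error there is uniform over all measures in $\mathcal{M}_{1,p}(K_2\times G_n)$ (the bounds in its proof come only from Chernoff and union bounds depending on~$p$ and~$n$, not on the measure). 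Hence there is $n_0=n_0(p,\eps)$ so that for all $n\geq n_0$ every edge of $\mathbf{H}_{\nu}$ is present with probability at least $1-\eps$, i.e.\ $\nu\in\mathcal{M}_{1,\geq 1-\eps}(H)$.

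It remains to check the path-lifting property, which is exactly the argument in the paragraph preceding the lemma. Given a path $u=w_0,w_1,\dots,w_\ell=v$ in $\mathbf{H}_{\nu}$, each edge $w_{i-1}w_i$ furnishes a connected subgraph $C_i\subseteq\{w_{i-1},w_i\}\times V(G)$ of $\mathbf{(H\times G)}_{\mu}$ meeting more than half of $\{w_{i-1}\}\times V(G)$ and more than half of $\{w_i\}\times V(G)$; consecutive $C_i$ and $C_{i+1}$ both meet more than half of $\{w_i\}\times V(G)$ and so must intersect, whence $\bigcup_{i}C_i$ is connected and contains a vertex of $\{u\}\times V(G)$ and a vertex of $\{v\}\times V(G)$. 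I expect the only genuinely delicate point to be confirming that the $o(1)$ in Theorem~\ref{theorem: threshold for left meets right}(i) is uniform over $\mathcal{M}_{1,p}(K_2\times G_n)$, so that one threshold~$n_0$ works for every~$\mu$ at once; the rest is routine bookkeeping.
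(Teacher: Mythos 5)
Your proposal is correct and follows essentially the same route as the paper: define $\nu$ by the `$\mathrm{Left\ meets\ Right}$' renormalisation rule, note that this is a deterministic function of $\mathbf{(H\times G)}_\mu$ (hence a coupling), check $1$-independence via disjointness of $V(A)\times V(G)$ and $V(B)\times V(G)$, invoke Theorem~\ref{theorem: threshold for left meets right}\cref{LMR_i} for the edge-probability bound, and verify path-lifting through consecutive overlapping components. Your explicit remarks on the $\mathcal{M}_{1,p}$ versus $\mathcal{M}_{1,\geq p}$ discrepancy and on the uniformity of the $o(1)$ over measures are both correct and make the argument marginally more careful than the paper's terse version, but the substance is identical.
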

\begin{proof}
Let $p>4-2\sqrt{3}$ and $\eps>0$ be fixed. By Theorem~\ref{theorem: threshold for left meets right}\cref{LMR_i}, there exists $n_0\in \mathbf{N}$ such that for all $n \geq n_0$ and all $\mu \in \mathcal{M}_{1,\geq p}(K_2\times G_n)$, the $\mu$-probability of the event `$\mathrm{Left\  meets\  Right}$' is at least~$1-\eps$.
For $n\geq n_0$, $G=G_n$ and $\mu \in \mathcal{M}_{1,\geq p}(H\times G)$, define a random graph model $\mathbf{H}_{\nu}$ from $\mathbf{\left(H\times G\right)}_{\mu}$ as follows: for each edge $uv\in E(H)$, we add~$uv$ to $\mathbf{H}_{\nu}$ if and only if there is a connected component in $\mathbf{\left(H\times G\right)}_{\mu}[\{u,v\}\times V(G_n)]$ containing strictly more than half of the vertices in $\{u\}\times V(G_n)$ and strictly more than half of the vertices $\{v\}\times V(G_n)$. 
The model $\mathbf{H}_{\nu}$ is clearly $1$-independent, has edge-probability at least~$1-\eps$, and has the property that any path in $\mathbf{H}_{\nu}$ can be lifted up to a path in $\mathbf{\left(H\times G\right)}_{\mu}$. This proves the Lemma.
\end{proof}
\noindent Recall that $2$-neighbour bootstrap percolation on a graph~$G$ is a discrete-time process defined as follows. At time $t=0$, an initial set of infected vertices $A=A_0$ is given. At every time $t\geq 1$, every vertex of~$G$ which has at least~$2$ neighbours in~$A_{t-1}$ becomes infected and is added to~$A_{t-1}$ to form~$A_t$. We denote by $\overline{A}$ the set of all vertices of~$G$ which are eventually infected, $\overline{A}=\bigcup_{t\geq 0}A_t$. Following Day, Falgas-Ravry and Hancock~\cite{DayFalgasRavryHancock20}, we say that a graph~$G$ has the \emph{finite $2$-percolation property} if for every finite set of initially infected vertices~$A$, the set of eventually infected vertices $\overline{A}$ is finite. The content of \cite{DayFalgasRavryHancock20}[Corollary 24] is, informally, that the construction based on random-states we outlined above `works on all host graphs that have the finite $2$-percolation property'.
\begin{proof}[Proof of Theorem~\ref{theorem: pseudo random graphs}]
Let $H=\mathbb{Z}^2$. Pick $\eps>0$ such that $1-\eps>0.8639$. Then by Lemma~\ref{lemma: renormalisation}, for any $p>4-2\sqrt{3}$,~$n$ sufficiently large and $G=G_n$, we can couple a random graph $\mathbf{\left(H\times G\right)}_{\mu}$, $\mu \in \mathcal{M}_{1,\geq p}(H)$ with a random graph $\mathbf{H}_{\nu}$, $\mu \in \mathcal{M}_{1,\geq 1-\eps}(H)$ such that if $\mathbf{H}_{\nu}$ percolates then so does $\mathbf{\left(H\times G\right)}_{\mu}$. Since $p_{1,c}(H)<0.86339$, as proved in~\cite[Theorem 2]{BalisterBollobasWalters05}, it follows that 
$p_{1,c}(H\times G)\leq p$. Since $p>4-2\sqrt{3}$ was arbitrary, we have the claimed upper bound $\lim_{n\rightarrow \infty}p_{1,c}(H\times G_n)\leq 4-2\sqrt{3}$. The lower bound $\lim_{n\rightarrow \infty}p_{1,c}(H\times G_n)\geq 4-2\sqrt{3}$ follows from~\cite[Corollary 24]{DayFalgasRavryHancock20} and the fact that $\mathbb{Z}^2\times G_n$ is easily seen to have the finite $2$-percolation property. Indeed, for any finite set of vertices~$A$ in $\mathbb{Z}^2\times G_n$, there is some finite~$N$ such that $A\subseteq [N]^2\times V(G_n)$. Now every vertex outside $[N]^2\times V(G_n)$ has at most one neighbour in $[N]^2\times V(G_n)$, and thus can never be infected by a $2$-neighbour bootstrap percolation process started from~$A$.
\end{proof}
\begin{remark}
The proof above in fact works in a more general setting than $\mathbb{Z}^2$: suppose~$H$ has the finite $2$-percolation property and satisfies $p_{1,c}(H)<1$. Let $(G_n)_{n \in \mathbb{N}}$ be a sequence of weakly $q$-pseudorandom $n$-vertex graphs with $nq(n)\gg \log n$. Then $H\times G_n$ also has the finite $2$-percolation property, and the proof above shows
\[\lim_{n\rightarrow \infty}p_{1,c}(H\times G_n)=4-2\sqrt{3}.\]
Examples of graphs with the finite $2$-percolation property include many of the standard lattices studied in percolation theory, such as the honeycomb (hexagonal) lattice, the dice (rhombile) lattice or the tetrakis (`Union Jack') lattice.
\end{remark}
\begin{proof}[Proof of Theorem~\ref{theorem: 1-indep percolation in Z^2 times Kn}]
	Since~$K_n$ is $1$-pseudorandom, Theorem~\ref{theorem: 1-indep percolation in Z^2 times Kn} is immediate from Theorem~\ref{theorem: pseudo random graphs}.
\end{proof}
\begin{proof}[Proof of Theorem~\ref{theorem: pseudo random graphs long paths}]
	Let $H=\mathbb{Z}^2$. Pick $\eps>0$ such that $1-\eps>3/4$. Then by Lemma~\ref{lemma: renormalisation}, for any $p>4-2\sqrt{3}$,~$n$ sufficiently large and $G=G_n$, we can couple a random graph $\mathbf{\left(H\times G\right)}_{\mu}$, $\mu \in \mathcal{M}_{1,\geq p}(H)$ with a random graph $\mathbf{H}_{\nu}$, $\mu \in \mathcal{M}_{1,\geq 1-\eps}(H)$ such that if $\mathbf{H}_{\nu}$ contains a path of length~$\ell$ then so does $\mathbf{\left(H\times G\right)}_{\mu}$. Since $p_{1,LP}(H)=\frac{3}{4}$, as proved in~\cite[Theorem 11(i)]{DayFalgasRavryHancock20}\footnote{For the proof of this theorem, all we need is $p_{1,LP}(H)<1$, and thus the weaker bound $p_{1,LP}(H)\leq 1-1/3e$ (which follows directly from an application of the Lov\'asz local lemma) would suffice for our purposes here.} it follows that 
	$p_{1,LP}(H\times G)\leq p$. Since $p>4-2\sqrt{3}$ was arbitrary, we have the claimed upper bound $\lim_{n\rightarrow \infty}p_{1,LP}(H\times G_n)\leq 4-2\sqrt{3}$. The lower bound $\lim_{n\rightarrow \infty}p_{1,c}(H\times G_n)\geq 4-2\sqrt{3}$ was proved in~\cite[Theorem 12(v)]{DayFalgasRavryHancock20} (with the same construction as we outlined at the beginning of this section, adapted mutatis mutandis to the setting $H=\mathbb{Z}$).
\end{proof}
\begin{proof}[Proof of Theorem~\ref{theorem: answer to Day FR Hancock question}]
		Since~$K_n$ is $1$-pseudorandom, Theorem~\ref{theorem: answer to Day FR Hancock question} is immediate from Theorem~\ref{theorem: pseudo random graphs long paths}.
\end{proof}
\section{Component evolution in $1$-independent models}\label{section: component evolution}
\noindent Recall that the independence number~$\alpha(G)$ of a graph~$G$ is the size of a largest independent (edge-free) subset of~$V(G)$, and that a \emph{perfect matching} in a graph~$G$ is a matching whose edges together cover all the vertices in~$V(G)$. Moreover, a graph $G$ is a \emph{complete multipartite graph} if there exists a partition of $V(G)$ such that two vertices in $V(G)$ are joined by an edge in $G$ if and only if they are contained in different parts of the partition. Finally, the \emph{complement} $G^c$ of a graph $G$ is the graph on $V(G)$ whose edges are the non-edges of $G$, $G^c:=(V(G), V(G)^{(2)}\setminus E(G))$.
\begin{lemma}\label{lemma: minimising the number of PM}
	If~$G$ is a complete multipartite graph on~$2n$ vertices with independence number $\alpha(G)\leq n$, then~$G$ contains at least~$n!$ perfect matchings.
\end{lemma}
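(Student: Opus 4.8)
The plan is to prove the statement by induction on $n$, partitioning the perfect matchings of $G$ according to the partner of a carefully chosen vertex. The key point is that if $v$ lies in a \emph{largest} part of $G$, then deleting $v$ together with any neighbour leaves a complete multipartite graph on $2(n-1)$ vertices whose independence number is again at most half the number of vertices, so the inductive hypothesis applies to each of the resulting subgraphs.

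Concretely, for the base case $n=1$ a complete multipartite graph on two vertices with $\alpha \le 1$ must be $K_2$, which has exactly $1 = 1!$ perfect matching. For the inductive step, write the parts of $G$ as $V_1, V_2, \dots, V_k$ with $\lvert V_1\rvert \ge \lvert V_2\rvert \ge \cdots$, so that $\lvert V_1\rvert = \alpha(G) \le n$; since $\lvert V_1 \rvert \le n < 2n$ we have $k \ge 2$ and $G$ has edges. Fix a vertex $v \in V_1$ (chosen by some fixed rule, so that the choice is canonical). Every perfect matching of $G$ matches $v$ to exactly one of its $2n - \lvert V_1\rvert \ge n$ neighbours $w$, and the perfect matchings of $G$ containing the edge $vw$ are in bijection with the perfect matchings of $G - \{v,w\}$. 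These classes partition the set of all perfect matchings of $G$, so it suffices to show that $G - \{v,w\}$ is a complete multipartite graph on $2(n-1)$ vertices with independence number at most $n-1$: then by induction each class has at least $(n-1)!$ perfect matchings, giving a total of at least $(2n-\lvert V_1\rvert)(n-1)! \ge n\cdot(n-1)! = n!$.

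The one place that requires care — and the part I expect to be the main obstacle — is checking that $\alpha(G-\{v,w\}) \le n-1$. Deleting $v$ shrinks $V_1$ to size $\lvert V_1\rvert - 1 \le n-1$, and deleting $w$ shrinks whichever other part contains it, so the only way the residual independence number could remain $n$ is if there is a part $V_\ell$ with $\ell \ne 1$ and $w \notin V_\ell$ and $\lvert V_\ell\rvert = n$. But $\lvert V_\ell\rvert = n$ together with $\lvert V_1\rvert \ge \lvert V_\ell\rvert$ and $\lvert V_1\rvert \le n$ forces $\lvert V_1\rvert = \lvert V_\ell\rvert = n$, hence $k=2$ with both parts of size exactly $n$; then $w$, being a neighbour of $v \in V_1$, must lie in $V_2 = V_\ell$, a contradiction. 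Thus $\alpha(G-\{v,w\}) \le n-1$ in every case, which is exactly what the induction needs. (This is where choosing $v$ in a part of maximum size is essential: an arbitrary edge of $G$ need not have this property.)
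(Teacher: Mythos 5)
Your proof is correct, and it takes a genuinely different route from the paper's. The paper argues by extremal optimization: it takes a hypothetical minimizer, shows that merging two small parts can only decrease the number of perfect matchings (so the minimizer has $|V_{r-1}|+|V_r|\geq n$, hence at most three parts), writes down an exact closed-form count $\frac{|V_1|!\,|V_2|!\,|V_3|!}{(n-|V_1|)!(n-|V_2|)!(n-|V_3|)!}$ of perfect matchings in a complete tripartite graph, and then shows by a shifting argument that this quantity is minimized by $K_{n,n}$, giving the value $n!$. Your argument instead proceeds by a clean induction on $n$, conditioning on the partner $w$ of a vertex $v$ in a largest part $V_1$: the degree of $v$ is $2n-|V_1|\geq n$, and the verification that $\alpha(G-\{v,w\})\leq n-1$ (which you correctly identify as the crux, and which correctly uses the maximality of $|V_1|$) lets you apply the inductive hypothesis to each of the $\geq n$ residual graphs, yielding $n\cdot(n-1)!=n!$. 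Your approach is more elementary --- it avoids both the exact count and the two optimization steps --- at the cost of the (small) extra bookkeeping needed to verify the inductive invariant; the paper's approach has the side benefit of giving exact counts for the tripartite case, but that extra information is not used elsewhere.
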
	
\begin{proof}
	Let~$G$ be a complete multipartite graph on~$2n$ vertices with the minimum number of perfect matchings subject to $\alpha(G)\leq n$. Let $V_1, V_2, \dots, V_r$ denote the parts of~$G$ with $\s{V_1} \geq \s{V_2} \geq \dots \geq \s{V_r}$.
	If $\s{V_{r-1}} + \s{V_r} \leq n$, then the graph~$G'$ obtained from~$G$ by deleting all edges in $G[V_{r-1}, V_r]$ satisfies $\alpha(G')\leq n$ and has at most as many perfect matchings as~$G$. We may therefore assume that $\s{V_{r-1}} + \s{V_r} \geq n$, and thus in particular that $r \leq 3$. Consider a perfect matching~$M$ in~$G$ and let~$i$ be the number of edges in $E(G[V_1, V_2]) \cap M$. Clearly $\s{E(G[V_1, V_3]) \cap M} = \s{V_1} - i$ and $\s{E(G[V_2, V_3])\cap M} = \s{V_2} - i = \s{V_3} - (\s{V_1} -i)$. From this we deduce that $i = \frac{1}{2}(\s{V_1} + \s{V_2} - \s{V_3}) = n - \s{V_3}$. Hence the number $\mathrm{PM}(G) $ of perfect matchings in~$G$ is:
	\begin{align*}
	\mathrm{PM}(G) &= \binom{\s{V_1}}{i} \binom{\s{V_2}}{i} \binom{\s{V_3}}{\s{V_1}-i} i! (\s{V_2} - i)! (\s{V_1} - i)! = \frac{\s{V_1}!\s{V_2}!\s{V_3}!}{(n - \s{V_1})! (n - \s{V_2})! (n - \s{V_3})!}.
	\end{align*}
	(Here $\binom{\s{V_1}}{i} \binom{\s{V_2}}{i}i!$ counts the number of different ways of selecting $i$-sets of vertices from each of $V_1$ and $V_2$ and joining them by a perfect matching, while $\binom{\vert V_3\vert}{\vert V_1\vert-i} (\s{V_2} - i)! (\s{V_1} - i)!$ counts the number of ways of joining the vertices of $V_3$ by a perfect matching to the remaining vertices of $V_1\cup V_2$.)

	\noindent If $\s{V_3} > 0$, then let~$G'$ be the complete tripartite graph with parts of size $\s{V_1}, \s{V_2}+1, \s{V_3}-1$. Note that $\alpha(G') \leq n$. By the formula above , we have 
	\[
	\frac{\mathrm{PM}(G)}{\mathrm{PM}(G')} = \frac{\s{V_3}(n-\s{V_3}+1)}{(\s{V_2}+1)(n-\s{V_2})} \geq 1,
	\]
	since $\s{V_3}(n-\s{V_3}+1) - (\s{V_2}+1)(n-\s{V_2}) = (\s{V_2}-\s{V_3}+1)(\s{V_2} + \s{V_3} - n) \geq 0$ (as $\s{V_2} \geq \s{V_3}$ and $\s{V_2} + \s{V_3} \geq n$). It follows that $\mathrm{PM}(G) \geq \mathrm{PM}(K_{n,n}) = n!$ as claimed.
\end{proof}
\begin{proof}[Proof of Proposition~\ref{prop: two-state measure minimises prob of compomnent of size 1/2}]
Let $H=K_{2n}$.
For all $p\in [\frac{1}{2},1]$, we may construct the two-state measure $\mu_{2s,p} \in \M_{1,p}(H)$ which satisfies:
	\begin{align*}
	\mathbb{P}\left[ \vert C_1(\mathbf{H}_{\mu_{2s}, p})\vert \leq n \right]=	\mathbb{P}\left[ \vert C_1(\mathbf{H}_{\mu_{2s}, p})\vert = n \right] =\binom{2n}{n}\theta^n(1-\theta)^n=\binom{2n}{n}\left(\frac{1-p}{2}\right)^n,
	\end{align*}
	proving the upper bound in that range. For $p_{2n}\leq p\leq \frac{1}{2}$, we note that $\theta=\theta(p)$ is no longer a real number. However, as shown in~\cite[Section 7.1]{DayFalgasRavryHancock20}, we may take a `complex limit' of the $2$-state measure~$\mu_{2s,p}$, and the conclusion above still holds.

	For the lower bound, let $C_1, C_2, \ldots ,C_r$ be the connected components of a $\mu$-random subgraph $\mathbf{H}_{\mu}$ of~$K_{2n}$. Let~$\mathbf{G}$ denote 
	the complete multipartite graph associated with the partition $\sqcup_{i}C_i$ of $V(K_{2n})=[2n]$. Observe that $\mathbf{G}$ is a subgraph of  the complement $\mathbf{H}_{\mu}^c$ of $\Hm$. If $\vert C_i \vert \leq n$ for all~$i$, then $\alpha(\mathbf{G})\leq n$, whence by Lemma~\ref{lemma: minimising the number of PM}~$\mathbf{G}$ contains at least~$n!$ perfect matchings. In particular, $\mathbf{H}_{\mu}^c$ must contain at least~$n!$ perfect matchings. By Markov's inequality, we thus have
	\begin{align*}
	\mathbb{P}\left[\vert C_1(\Hm)\vert \leq n \right]&\leq 	\mathbb{P}\left[\Hm^c \textrm{ contains } \geq n! \textrm{ perfect matchings}\right]\\
	& \leq \frac{1}{n!}\mathbb{E}\left[\#\{\textrm{perfect matchings in }\Hm^c\}\right]
	\\
	&= \frac{1}{n!}\left(\frac{1}{n!} \prod_{i=0}^{n-1}\binom{2n-2i}{2}\right)(1-p)^n=\binom{2n}{n}\left(\frac{1-p}{2}\right)^n.
	\end{align*}
	(Here $\left(\frac{1}{n!} \prod_{i=0}^{n-1}\binom{2n-2i}{2}\right)$ counts the number of perfect matchings in $K_{2n}$ by selecting $n$ vertex-disjoint edges sequentially one after the other, and dividing through by $n!$.) The lower bound follows.
\end{proof}
\begin{proof}[Proof of Theorem~\ref{theorem: component size pseudo-random graph}]
Let $p\in (\frac{1}{r+1}, \frac{1}{r}]$ be fixed. Fix $\eps=\eps(p)>0$ sufficiently small. For~$n$ large enough, we have by the pseudorandomness assumption on~$H_n$ that for every $U\subseteq V(H_n)$, $e(H_n[U])\leq q\frac{\s{U}^2}{2}+\eps^2 pqn^2$. It then follows from Lemma~\ref{lem:edge_concentration} that whp 
\begin{align}\label{eq: bound on numer of edges in Hm}
e(\Hm)\geq pq\frac{n^2}{2}(1-4\eps^2),
\end{align}
which is strictly greater than $\frac{qn^2}{2(r+1)}$ for $\eps=\eps(p)$ chosen sufficiently small. Assume~\eqref{eq: bound on numer of edges in Hm}. We show this implies the claimed lower bound on the size of a largest component.

If $\vert C_1(\Hm)\vert \leq \frac{n}{r+1}-\varepsilon n$, then for~$\eps$ sufficiently small there is a partition of~$V(H)$ into at most~$2(r+1)+1$ sets, each of which has size at most $\frac{n}{r+1}-\varepsilon n$, such that every connected component of~$\Hm$ is wholly contained in one of the sets of the partition. Indeed, such a partition can be obtained by starting with a partition of $V(H)$ into the connected components of~$\Hm$, and then as long as the partition contains two parts of size at most $\frac{1}{2}\left(\frac{n}{r+1}-\eps n\right)$, choosing two such parts arbitrarily and merging them into a single part. Since for any $(2r+3)$-tuple $(x_1, \ldots, x_{2r+3})$ with $\frac{1}{r+1}-\varepsilon\geq x_i \geq 0$ and $\sum_i x_i=1$ we have $\sum_i (x_i)^2\leq (r+1)\left(\frac{1}{r+1}-\varepsilon\right)^2 + \left((r+1)\eps\right)^2$, we have by our pseudorandomness assumption that 
\begin{align*}
e(\mathbf{H}_{\mu})\leq \frac{q(r+1)}{2}\left(\frac{1}{r+1}-\varepsilon\right)^2n^2+ \frac{q}{2}\left((r+1)\eps\right)^2n^2 + (2r+3)\eps^2pqn^2<\frac{qn^2}{2(r+1)}
\end{align*}
 for~$\eps$ sufficiently small, contradicting~\eqref{eq: bound on numer of edges in Hm}. Thus we may assume that $\vert C_1(\Hm)\vert>\frac{n}{r+1}-\varepsilon n$.

 If $\vert C_1(\Hm)\vert \geq \frac{n}{r}$, then we have nothing to show. Finally if $\frac{n}{r+1}-\varepsilon n\leq \vert C_1(\Hm)\vert < \frac{n}{r}$, then~$\Hm$ contains at least~$r+1$ components. Let $\alpha n$ denote the size of a largest component, where  $\frac{1}{r+1}-\eps < \alpha < \frac{1}{r}$. Then
	\[\left(r\alpha^2 + (1-r\alpha)^2 \right)q\frac{n^2}{2} +(r+2)\eps^2pqn^2\geq e(\mathbf{H}_{\mu})\geq pq\frac{n^2}{2}(1-4\eps^2).\] 
	Dividing through by~$qn^2/2$, rearranging terms and using the fact~$\eps$ is chosen sufficiently small, we get
\[ r\alpha^2 +(1-r\alpha)^2 \geq p -\eps.\]
	Solving for~$\alpha$, we get that
	\[\alpha \geq  \frac{1+\sqrt{\frac{(r+1)(p-\eps)-1}{r}}}{r+1},\]
	giving part (i).

	For part (ii), consider the $r+1$-state measure in which each vertex is assigned state~$r+1$ with probability $\frac{1-\sqrt{r\left((r+1)p-1\right)}}{r+1}$ and a uniform random state from the set $\{1,2, \ldots, r\}$ otherwise, and in which an edge is open if and only if its vertices are in the same state. This is easily seen to be a $1$-ipm with the requisite properties.
\end{proof}

\subsection*{Acknowledgements}
The authors would like to thank two anonymous referees for their careful work, which helped improve the exposition in the paper. This research was carried out while the second author visited Ume{\aa} University under the auspices of an Erasmus exchange scheme, whose support is gratefully acknowledged.

\end{document}